\let\oldcite\cite                                  
\newtheorem{thm}{Theorem}[section]
\newtheorem{cor}[thm]{Corollary}
\newtheorem{lem}[thm]{Lemma}
\newtheorem{prop}[thm]{Proposition}
\theoremstyle{definition}
\newtheorem{defn}[thm]{Definition}
\theoremstyle{remark}
\newtheorem{rem}[thm]{Remark}
\numberwithin{equation}{section}
\newtheorem{ex}[thm]{Example}
\theoremstyle{remark}
\newcommand{\liminv}{\varprojlim}
\newcommand{\x}{\times}
\newcommand{\Aut}{\operatorname{Aut}}
\newcommand{\PSL}{\operatorname{PSL}}
\newcommand{\id}{\operatorname{id}}
\newcommand{\pr}{\operatorname{pr}}
\newcommand{\Iso}{\operatorname{Iso}}
\def\smashedlongrightarrow{\setbox0=\hbox{$\longrightarrow$}\ht0=1pt\box0}
\def\risom{\buildrel\sim\over{\smashedlongrightarrow}}
\def\smashedst{\setbox0=\hbox{$\rightrightarrows$}\ht0=4pt\box0}
\newcommand{\sst}[1]{\stackrel{#1}{\smashedst}}
\newcommand{\lra}{\longrightarrow}
\newcommand{\llra}[1]{\stackrel{#1}{\lra}}
\newcommand{\bbZ}{\mathbb{Z}}
\newcommand{\bbR}{\mathbb{R}}
\newcommand{\bbC}{\mathbb{C}}
\newcommand{\bbN}{\mathbb{N}}
\newcommand{\bbQ}{\mathbb{Q}}
\newcommand{\al}{\alpha}
\newcommand{\ep}{\epsilon}
\newcommand{\De}{\Delta}
\newcommand{\X}{\mathcal{X}}
\newcommand{\Y}{\mathcal{Y}}
\newcommand{\Z}{\mathcal{Z}}
\newcommand{\U}{\mathcal{U}}
\newcommand{\V}{\mathcal{V}}
\newcommand{\C}{\mathsf{C}}
\newcommand{\OO}{\mathcal{O}}
\newcommand{\HH}{\mathcal{H}}
\newcommand{\N}{\mathcal{N}}
\newcommand{\Xm}{\X_{mod}}
\newcommand{\Ym}{\Y_{mod}}
\newcommand{\pim}{\pi}
\newcommand{\pX}{\pi_1(\X,x)}
\newcommand{\pXm}{\pi_1(\Xm,x)}
\newcommand{\pY}{\pi_1(\Y,y)}
\newcommand{\ox}{\omega_x}
\newcommand{\oy}{\omega_y}
\newcommand{\GSet}{G\mbox{-}\mathsf{Set}}
\newcommand{\GWSet}{G^{\wedge}\mbox{-}\mathsf{Set}}
\begin{document}

\title[Fundamental groups of topological stacks with slice property]
{Fundamental groups of topological stacks with slice property}%
\author{Behrang Noohi}

\begin{abstract}
  The main result of the paper is a formula for  the fundamental
  group of the coarse moduli space of a topological stack. As an
  application, we find simple formulas for the fundamental group of
  the coarse quotient  of a group action on a topological space in
  terms of the fixed point data. In particular, we recover, and
  vastly generalize, results of
  \oldcite{Armstrong1,Armstrong2,Bass,Higgins,Rhodes}.
\end{abstract}
\maketitle%

\tableofcontents%
\section{Introduction}{\label{S:Introduction}}

The purpose of this paper is to prove a basic formula for the
fundamental group of the coarse moduli stack of a topological stack
(Theorem \ref{T:semilocally}). This result has  consequences in
classical algebraic topology which seem, surprisingly, to be new.
(Some special cases have appeared previously in
\cite{Armstrong1,Armstrong2,Higgins,Rhodes}.) They give rise to
simple formulas for  the fundamental group of the coarse quotient
space of a group action on a topological space in terms of the fixed
point data of the action; see Theorem \ref{T:fundgpquotient1},
Theorem \ref{T:fixed2}, and Remark \ref{R:trick} -- Corollary
\ref{C:connected} should also be of interest.

For the above results to hold, one needs a technical hypothesis which
is called the {\em slice condition} (Definitions \ref{D:mild} and
\ref{D:strong}). As the terminology suggests, this notion is modeled
on the slice property of compact Lie group actions. Stacks which
satisfy slice condition include  quotient stacks of proper Lie
groupoids and stack that are (locally) quotients of Cartan
$G$-spaces ($\S$\ref{SS:slice}, Example 2), where $G$ is  an
arbitrary Lie group. The latter case includes: 1) Deligne-Mumford
topological stacks, hence, all orbifolds; 2) quotient stacks of
compact Lie group actions on completely regular spaces; 3) quotient
stacks of proper Lie group actions on locally compact spaces.

Roughly speaking, the idea behind the above theorems is that, in the
presence of the slice condition, the fundamental group of the coarse
moduli space (or the coarse quotient of a group action) is obtained
by simply killing the loops which ``trivially'' die as we pass to the
coarse space. For example, we know that under the moduli map
$\X\to\Xm$ the {\em ghost loops} (or {\em inertial loops}) in
$\pi_1(\X)$  die. Theorem \ref{T:semilocally} says, roughly,
that $\pi_1(\Xm)$ is obtained precisely  by killing all the ghost
loops in $\pi_1(\X)$.

Theorem \ref{T:semilocally} can be applied in a variety of
situations, e.g., when $\X$ is a complex-of-groups, an orbifold, the
leaf stack of a foliation, and so on. In the case where $\X$ is a
graph-of-groups, this recovers a result of Bass (\cite{Bass},
Example 2.14).

Our strategy in proving these results is to make a systematic use of
the covering theory of topological stacks, as developed in
\cite{homotopy}, $\S$18. This approach  has the advantage that it is
neat and it minimizes the use of path chasing arguments. The major
players in the game are the maps $\ox \: I_x \to \pX$ introduced in
(\cite{homotopy}, $\S$17) which realize elements of the inertia
groups $I_x$ as ghost loops in $\X$.

\vspace{0.1in}

\noindent{\bf Organization of the paper} To be able to make use of
the formalism of Galois categories, in the first part of
$\S$\ref{S:AbstractGal} we go over prodiscrete topologies on groups
and prodiscrete completions. (A prodiscrete topology on a groups is
a topology which admits a basis at the identity consisting  of subgroups
(not necessarily normal); pro-$\mathsf{C}$-topologies of
\cite{Ribes} are examples of these.) This is presumably standard
material. In the second part of $\S$\ref{S:AbstractGal}, we remind
the reader of Grothendieck's theory of Galois categories.

Main examples of Galois categories arise from covering stacks, and
they give rise to prodiscrete groups. This is discussed in
$\S$\ref{S:Galois}. In $\S$\ref{S:GaloisExamples} we introduce the
Galois categories that interest us in this paper. Understanding
these Galois categories is the key in proving our results about
fundamental groups of stacks.

Up to this point in the paper, everything is quite formal and we do
not make any assumptions on our stacks (other than being connected
and locally path-connected). In $\S$\ref{S:Mild}, we introduce
topological stacks with  {\em slice property} (Definitions
\ref{D:mild} and \ref{D:strong}). In $\S$\ref{S:Strongly}, we look
at a certain class of stacks with slice property which satisfy a
certain locally path-connectedness condition. We call them {\em
strongly locally path-connected} stacks. These are stacks whose
covering theory is as well-behaved
as it can  be.

The  interrelationship between the various Galois categories
introduced in $\S$\ref{S:GaloisExamples} is discussed in
$\S$\ref{S:Relation}. The slice property will play an important
role in relating these Galois theories.

In $\S$\ref{S:Moduli}, we translate the results of
$\S$\ref{S:GaloisExamples} in terms of fundamental groups and
obtain our first main result, Theorem \ref{T:semilocally}.

In $\S$\ref{S:Quotient1} and $\S$\ref{S:Quotient2}, we apply
Theorem \ref{T:semilocally} to the quotient stack $[X/G]$ of a
group action and derive our next main results,
Theorem \ref{T:fundgpquotient1} and Theorem \ref{T:fixed2}.

In $\S$\ref{S:Armstrong}, we compare our results with those of
Armstrong \cite{Armstrong2}.

\section{Some abstract Galois theory}{\label{S:AbstractGal}}

In this section, we review Grothendieck's   theory of Galois
categories, slightly modified so we do not need the finiteness
assumptions of \cite{SGA1}.

Throughout the paper,
all group actions are on the left and are continuous.

\subsection{Prodiscrete completions}{\label{SS:pro}}

To set up a Galois theory that is general enough for our purposes,
we need to extend the theory of profinite groups so that it applies
to topologies generated by families of subgroups that are not
necessarily of finite index. Lack of compactness causes a bit of
technical difficulty, but it does not effect the outcome, at least
as far as our applications are concerned. The material in
 this subsection should be standard.

 \begin{defn}{\label{D:prodiscrete}}
   Let $G$ be a topological group. We say that the topology of $G$
   is {\bf prodiscrete} if its open subgroups form a fundamental
   system of neighborhoods  at the identity.
 \end{defn}

Let $G$ be a group, and let $\HH$ be a family of subgroups of $G$
satisfying the following axioms:

  \begin{itemize}

          \item[$\mathbf{Top1.}$] If $H_1, H_2 \in \HH$,
                then $H_1 \cap H_2 \in \HH$.

          \item[$\mathbf{Top2.}$] For any $H \in \HH$ and any $g \in
           G$,     $gHg^{-1}$ is in $\HH$.

           \item[$\mathbf{Top3.}$] If $H \in \HH$ and $H \subseteq
            H'$,  then $H' \in \HH$.
   \end{itemize}

In this case, there is a prodiscrete topology on $G$ with the property that
a subgroup $H \subseteq G$ is open if and only if $H \in \HH$. We
sometime refer to this topology as the {\bf $\HH$-topology} on $G$.

The $\HH$-topology is Hausdorff if and only if
$\bigcap_{\HH}H=\{1\}$. In this case, the topology is totally
disconnected. It is compact if and only if every $H \in \HH$ has
finite index in $G$. In this case, there exists a fundamental system
of neighborhoods at the identity consisting of {\em normal}
subgroups.

  \begin{ex}{\label{E:prodiscrete1}} \end{ex}
    \begin{itemize}

     \item[$\mathbf{1.}$] For an arbitrary topological group $G$,
      there is a natural choice of  a prodiscrete topology on $G$,
      namely, the one where $\HH$ is the collection of all open-closed
      subgroups of $G$. We call this the {\bf canonical} prodiscrete
      topology on  $G$. There is another natural topology on $G$
      generated by {\em normal} open-closed subgroups. These two
      topologies are in general not the same.

     \item[$\mathbf{2.}$] A given discrete group $G$ can be endowed
      with several prodiscrete topologies. For instance, the {\em
      profinite, prosolvable, pronilpotent}, and {\em pro-$p$}
      topologies. The {\em discrete} topology is also  prodiscrete. In
      general, for any formation $\C$  of groups (that is, a
      collection of groups closed under taking quotients and finite
      sub-direct products), one can consider the pro-$\C$ topology on a
      group $G$. In the pro-$\C$  topology, $H\subseteq G$ is open if
      $H$ is normal and $G/H$ is in $\C$.

    All these topologies have a basis consisting of normal
    subgroups.

     \item[$\mathbf{3.}$] Given a family $\HH$ of subgroups of a
      group $G$, there is a smallest prodiscrete topology on $G$ in
      which every $H \in \HH$ is open. We call this the {\em topology
      generated by $\HH$}. Open subgroups in this topology are
      subgroups $K \subseteq G$ which contain some finite intersection
      of conjugates of groups in $\HH$. When $\HH$ consists of a single
      normal subgroup $N$, then the open subgroups of the topology
      generated by $N$ are exactly  the subgroups of $G$ which
      contain $N$.
    \end{itemize}

\vspace{0.1in}

Let $G$ be an arbitrary topological group. We define $\GSet$ to be
the category of all {\em continuous} discrete $G$-sets. There is a
forgetful functor   $F_G \:\GSet\to \mathsf{Set}$.

\vspace{0.1in}
\noindent{\bf Convention.} Throughout the paper all $G$-sets are assumed
to be continuous and discrete.

 \begin{defn}{\label{D:completion}}
      Let $G$ be an arbitrary topological group. We define the {\bf
      prodiscrete completion} of $G$ to be $G^{\wedge}:=\Aut(F_G)$,
      where $\Aut(F_G)$ is the group of self-transformations of the
      forgetful functor $F_G \:\GSet\to \mathsf{Set}$.
      When $G$ is given by an $\HH$-topology, we will also use the
      notation $G^{\wedge}_{\HH}$.
 \end{defn}

 \begin{rem}
  The prodiscrete completion  of a topological group  $G$ is the
  same as the  prodiscrete completion of $G$ endowed with its
  canonical prodiscrete topology (Example
  \ref{E:prodiscrete1}.$\mathbf{1}$). Therefore, it is natural to
  restrict to prodiscrete topological groups when talking about
  prodiscrete completions.
 \end{rem}

There is a natural homomorphism $\iota_G \: G \to G^{\wedge}$ which
is characterized by the property that, for every $X \in$ $\GSet$,
the actions of $G$ and $G^{\wedge}$ on $X$ are compatible with each
other via $\iota_G$. The map $\iota_G$ is injective if and only
if $G$ is Hausdorff.

We endow $G^{\wedge}$ with  a prodiscrete topology in which the open
subgroups are those subgroups  of $\Aut(F_G)=G^{\wedge}$ which are
of the form $U_{X,x}$, for $X \in$ $\GSet$ and $x \in X$. Here,
$U_{X,x}$  stands for the group of all elements in $\Aut(F_G)$ whose
action on $X$ leaves $x$ fixed.

 \begin{prop}{\label{P:topology}}
   The  subgroups $U_{X,x} \subseteq G^{\wedge}$, where $X \in$
   $\GSet$ and $x \in X$, are exactly the open subgroups
   of a prodiscrete topology  on $G^{\wedge}$.
 \end{prop}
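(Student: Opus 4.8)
The plan is to verify directly that the family $\mathcal{B} = \{U_{X,x} : X \in \GSet,\ x \in X\}$ satisfies the axioms $\mathbf{Top1}$--$\mathbf{Top3}$ of a prodiscrete topology on $G^{\wedge}$, and that conversely every member of the resulting $\HH$-topology has this form, so that $\mathcal{B}$ is \emph{exactly} the collection of open subgroups. First I would record the elementary behavior of the groups $U_{X,x}$ under the standard operations on $G$-sets. Given $X, Y \in \GSet$ with basepoints $x, y$, the disjoint union $X \sqcup Y$ and the product $X \times Y$ are again in $\GSet$, and one has $U_{X \times Y,(x,y)} = U_{X,x} \cap U_{Y,y}$ inside $\Aut(F_G)$, since an automorphism of $F_G$ fixes $(x,y)$ in $X\times Y$ iff it fixes $x$ in $X$ and $y$ in $Y$ (the action on a product is the product action, and $F_G$ commutes with finite products). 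This gives $\mathbf{Top1}$. For $\mathbf{Top3}$, suppose $U_{X,x} \subseteq K$ for some subgroup $K \subseteq G^{\wedge}$; I would let $X'$ be the $G$-orbit of $x$ in $X$, so $X' \cong G/S$ for $S$ the $G$-stabilizer of $x$, note $U_{X,x} = U_{X',x}$, and then realize $K$ itself as a point-stabilizer: take the $G^{\wedge}$-set $G^{\wedge}/K$—one must check this is a \emph{continuous} discrete $G$-set via $\iota_G$, which holds precisely because $K$ is open in the $\HH$-topology once we know $U_{X,x}$ is, i.e. this part is cleaner to phrase as: the $U_{X,x}$ generate a topology and $\mathbf{Top3}$ holds by fiat for the generated topology, then argue the generated topology has no \emph{more} opens than the $U_{X,x}$'s.

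For $\mathbf{Top2}$ (conjugation-invariance): fix $g \in G^{\wedge}$ and $U_{X,x}$. I claim $g\, U_{X,x}\, g^{-1} = U_{X, gx}$, where $gx$ denotes the image of $x$ under the action of $g$ on $X$. Indeed $h \in g U_{X,x} g^{-1}$ iff $g^{-1}hg$ fixes $x$ iff $h$ fixes $gx$; this is a one-line computation using only that $g, h$ act on $X$ as elements of $\Aut(F_G)$. Since $gx \in X$, the conjugate is again of the required form, giving $\mathbf{Top2}$. Combined with $\mathbf{Top1}$ and the fact that $U_{X,x} \subseteq U_{X,x}$ closes nothing further, the family $\mathcal{B}$ together with all subgroups containing some finite intersection of $\mathcal{B}$-members is a legitimate $\HH$-satisfying family, hence defines a prodiscrete topology on $G^{\wedge}$ by the general construction recalled after Definition \ref{D:prodiscrete}.

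The substantive point—and the step I expect to be the main obstacle—is the converse: showing the topology is \emph{generated} by $\mathcal{B}$ in the strong sense that \emph{every} open subgroup already equals some $U_{X,x}$, with no need to pass to finite intersections or oversubgroups. By $\mathbf{Top1}$ finite intersections of $U_{X,x}$'s are again of this form ($X\times Y$ trick above), so the only real content is $\mathbf{Top3}$-type closure: if $U_{X,x} \subseteq K \subseteq G^{\wedge}$, I must produce $(Y,y)$ with $K = U_{Y,y}$. The natural candidate is $Y = X/{\sim}$, where I collapse the $G^{\wedge}$-orbit structure appropriately—more precisely, restrict to the orbit $G^{\wedge}x \subseteq X$, which as a transitive $G^{\wedge}$-set is $G^{\wedge}/U_{X,x}$, and then form the quotient $G^{\wedge}$-set $(G^{\wedge}/U_{X,x}) \to (G^{\wedge}/K)$; one checks $G^{\wedge}/K$ is a continuous discrete $G^{\wedge}$-set (its point-stabilizers contain the open subgroup $U_{X,x}$, hence are open), and pulling back the $G^{\wedge}$-action along $\iota_G$ exhibits it as an object of $\GSet$ by the defining compatibility of $\iota_G$. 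Taking $y$ the image of the identity coset, $U_{Y,y} = K$ by construction. The delicate verification is that this $Y$ genuinely lies in $\GSet$—i.e. the $G$-action is continuous (point-stabilizers open \emph{in $G$}) and the $G^{\wedge}$-action on $Y = F_G(Y)$-as-a-set is the one induced functorially from $\Aut(F_G)$; this is where the characterization of $\iota_G$ via compatibility of actions, stated just before the proposition, does the work. I would isolate this as the crux of the argument and handle $\mathbf{Top1}$, $\mathbf{Top2}$ as the quick preliminaries.
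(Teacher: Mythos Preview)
Your proposal is correct and follows essentially the same route as the paper: verify $\mathbf{Top1}$ via the product $X\times Y$, $\mathbf{Top2}$ via $gU_{X,x}g^{-1}=U_{X,gx}$, and $\mathbf{Top3}$ by realizing an overgroup $K\supseteq U_{X,x}$ as the stabilizer of a point in $G^{\wedge}/K$. You also correctly flag the two genuine subtleties---that the componentwise $G^{\wedge}$-action on $X\times Y$ must agree with the functorial one, and likewise for the quotient $Y$ in $\mathbf{Top3}$.

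The only place your sketch is thinner than the paper is the resolution of that second subtlety. You invoke ``the characterization of $\iota_G$ via compatibility of actions,'' but that statement only relates the $G$-action to the $G^{\wedge}$-action; it does not by itself force the \emph{two} $G^{\wedge}$-actions on $Y$ (the one you built by hand and the functorial one coming from $\Aut(F_G)$) to coincide. The paper handles this cleanly by constructing $Y$ not abstractly as $G^{\wedge}/K$ but concretely as the quotient of (the orbit in) $X$ by the partition into $K$-translates of $K\cdot x$; this makes the projection $p\colon X\to Y$ a morphism in $\GSet$, and since every $\gamma\in\Aut(F_G)$ is a natural transformation it must respect $p$, which pins down the functorial $G^{\wedge}$-action on $Y$ as the intended one. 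The same trick with the projections $\pr_1,\pr_2$ handles the product case in $\mathbf{Top1}$. Your abstract $G^{\wedge}/K$ is of course the same set, but phrasing it as a quotient of $X$ is what supplies the morphism needed to close the argument.
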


 \begin{proof}
   We have to check the axioms $\mathbf{Top1}$,$\mathbf{2}$,$\mathbf{3}$.

  \vspace{0.1in}

   \noindent{\em Proof of} $\mathbf{Top1}$.
       This follows from the equality $U_{X,x}\cap U_{Y,y}=U_{X\x
       Y,(x,y)}$. The proof of this equality is easy, but there is a
       tiny subtlety. The point is that, there are, a priori, two
       actions of $G^{\wedge}$ on $X\x Y$. One is the componentwise action
       on the product. The other is obtained by considering $X\x Y$ as
       an object in  $\GSet$ and then taking the induced action of
       $G^{\wedge}=\Aut(F_G)$ on it; see Definition \ref{D:completion}.
       These two actions are, however,  identical, as can be seen by
       considering the two ($G$-equivariant) projection maps
       $\pr_1\: X\x Y \to X$ and $\pr_2\: X\x Y \to X$ and using the
       fact that every $\gamma \in G^{\wedge}$, being a
       transformation of functors, should respect $\pr_1$ and $\pr_2$.

   \vspace{0.1in}
   \noindent{\em Proof of} $\mathbf{Top2}$.
        For every $\gamma \in G^{\wedge}$
        and every $U_{X,x}$, we have
        $\gamma U_{X,x}\gamma^{-1}=U_{X,\gamma{x}}$.

   \vspace{0.1in}
   \noindent{\em Proof of} $\mathbf{Top3}$.
   Note that in the definition of $U_{X,x}$, we may assume that the
   action of $G$ on  $X$ is transitive (because $U_{X,x}=U_{G\cdot
   x,x}$). This implies that the action of $G^{\wedge}$ on $X$ is
   also transitive. Let $U \subseteq G^{\wedge}$ be a subgroup that
   contains $U_{X,x}$. We have to construct a $G$-set $Y$ and a
   point $y \in Y$ such that $U$ is exactly the stabilizer of
   $y$ in $G^{\wedge}$.

    Let $A:=U\cdot x \subseteq X$ be the orbit of $x$ under the
   action of $U$. It is easy to see that, for every
   $\gamma, \gamma' \in G^{\wedge}$, either $\gamma\cdot
   A=\gamma'\cdot A$ or $\gamma\cdot A\cap \gamma'\cdot
   A=\emptyset$; the equality happens exactly when $\gamma$ and
   $\gamma'$ are in the same left coset of $U$ in $G^{\wedge}$.
   Since the action of $G^{\wedge}$ on $X$ is transitive, this
   partitions $X$ into translates of $A$. Let $Y$ be the set of
   equivalence classes. (In other words, $Y$ is just the set
   $G^{\wedge}/U$.) We have an induced action of $G^{\wedge}$
   on $Y$, hence also one of $G$ on $Y$. This way, $Y$ becomes a
   $G$-set. Under the action of $G^{\wedge}$,  the stabilizer group
   of the class of $A$ in $Y$ is exactly $U$. This completes the
   proof.

  There is a tiny subtlety in the above argument that needs some
  explanation. Note that $Y$, viewed as an object of
  $\GSet$ inherits an action of $\Aut(F_G)=G^{\wedge}$
  which, a priori, may be different   from the original action of
  $G^{\wedge}$ on it. However, these two action are actually the same.
  This can be seen by looking at the projection map $p\: X \to Y$,
  viewed as a morphism in $\GSet$,  and using the fact
  elements of $G^{\wedge}=\Aut(F_G)$ are transformations of functors
  (hence respecting $p$).
 \end{proof}

The following lemma is immediate.

  \begin{lem}{\label{L:1}}
      The map $U \mapsto \iota_G^{-1}(U)$ induces a bijection
      between open subgroups  $U$ of $G^{\wedge}$ and open subgroups
      of $G$. This bijection sends normal subgroups to normal
      subgroups. For every open subgroup $U \subseteq G^{\wedge}$,
      we have a bijection $G/\iota_G^{-1}(U) \risom G^{\wedge}/U$.
      In particular, the map $\iota_G \: G \to G^{\wedge}$ is
      continuous. Indeed, the topology on $G^{\wedge}$ is the finest
      topology that makes $\iota_G$ continuous. Furthermore, the
      natural functor $\iota_G^* \: G^{\wedge}\mbox{-}\mathsf{Set} \to
      G\mbox{-}\mathsf{Set}$ is an equivalence of categories. Finally,
      $\iota_G^*$  respects the forgetful functors $F_G \:
      G\mbox{-}\mathsf{Set} \to \mathsf{Set}$ and $F_{G^{\wedge}} \:
      G^{\wedge}\mbox{-}\mathsf{Set} \to \mathsf{Set}$. That is, the diagram
               $$\xymatrix@R=12pt@C=0pt@M=10pt{
           G^{\wedge}\mbox{-}\mathsf{Set}
                     \ar[rr]^{\iota_G^*}\ar[dr]_{F_{G^{\wedge}}}
                            & & G\mbox{-}\mathsf{Set} \ar[ld]^{F_G}  \\
                                                 & \mathsf{Set} &     }$$
      is commutative.
    \end{lem}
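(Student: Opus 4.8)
The plan is to unwind the definitions and verify each assertion in turn, using Proposition \ref{P:topology} and the characterization of open subgroups of $G^{\wedge}$ as the groups $U_{X,x}$.

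\begin{proof}
First I would check that $U \mapsto \iota_G^{-1}(U)$ lands among open subgroups of $G$ and is surjective. Given an open subgroup $H \subseteq G$, pick a $G$-set $X$ and a point $x$ with $H = \operatorname{Stab}_G(x)$ (for instance $X = G/H$); the compatibility of the $G$- and $G^{\wedge}$-actions via $\iota_G$ gives $\iota_G^{-1}(U_{X,x}) = H$, so the map is onto open subgroups, and also $U_{X,x}$ is open by Proposition \ref{P:topology}, so the map does send open subgroups to open subgroups. Injectivity: if $\iota_G^{-1}(U) = \iota_G^{-1}(U')$ for open $U = U_{X,x}$, $U' = U_{Y,y}$, then one shows $U_{X,x} = U_{X,x} \cap U_{Y,y} = U_{X \x Y,(x,y)}$ forces equality once one knows that an element of $G^{\wedge}$ fixing a point of a transitive $G$-set is determined on that set by where the $G$-orbit (all of it) goes, and that the actions agree on $\iota_G(G)$; the cleanest route is to observe that $G^{\wedge}/U \cong G/\iota_G^{-1}(U)$ as $G$-sets — which is the next claim anyway — and that this identification is natural, so $U$ is recovered as the stabilizer in $G^{\wedge}$ of a point of the $G$-set $G/\iota_G^{-1}(U)$. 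So I would prove the bijection $G/\iota_G^{-1}(U) \risom G^{\wedge}/U$ essentially simultaneously: the map is induced by $\iota_G$; surjectivity is the transitivity of the $G^{\wedge}$-action on the transitive $G$-set $X = G^{\wedge}/U$ (already noted in the proof of $\mathbf{Top3}$), and injectivity is the statement that two elements of $G$ in different cosets of $\iota_G^{-1}(U)$ act differently, which again follows from compatibility of the actions. Normality is preserved because $\iota_G^{-1}$ of a conjugate is the conjugate of $\iota_G^{-1}$ (as $\iota_G$ is a homomorphism), combined with surjectivity on cosets to see that $\iota_G(G)$ meets every coset, so normality downstairs forces normality upstairs.

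Continuity of $\iota_G$ is then immediate: the preimage of the open subgroup $U$ is $\iota_G^{-1}(U)$, which is open in $G$ by the bijection just established, and any open set is a union of cosets of open subgroups. For the ``finest topology'' claim, suppose $\tau$ is a topology on the underlying group $G^{\wedge}$ making $\iota_G$ continuous; I must show every $U_{X,x}$ is $\tau$-open. Here is where I expect the main obstacle: one needs to produce enough $\tau$-open sets purely from continuity of $\iota_G$ together with the group structure. The idea is that $U_{X,x}$, being open in the prodiscrete topology we put on $G^{\wedge}$, has $\iota_G(\iota_G^{-1}(U_{X,x})) \subseteq U_{X,x}$ with $\iota_G^{-1}(U_{X,x})$ open in $G$, hence $\tau$-continuity only directly gives that $\iota_G^{-1}(U_{X,x})$ is open in $G$ — not obviously that $U_{X,x}$ is $\tau$-open. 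The resolution is to use that the $G^{\wedge}$-topology is prodiscrete and that $U_{X,x}$ has finite — or at least well-controlled — ``width'': concretely, $U_{X,x} = \iota_G(\iota_G^{-1}(U_{X,x})) \cdot U_{X,x}$, and more to the point $G^{\wedge} = \iota_G(G)\cdot U_{X,x}$ (that is exactly the coset bijection above), so a transversal for $U_{X,x}$ can be chosen inside $\iota_G(G)$; then $U_{X,x} = \bigcap_{g}\, g\, U'$ over finitely/suitably many $g \in \iota_G(G)$ is not quite right, but the correct statement is that because $\iota_G(G)$ surjects onto $G^{\wedge}/U_{X,x}$ and $\iota_G$ is continuous, the $U_{X,x}$-action on the discrete set $G^{\wedge}/U_{X,x} = G/\iota_G^{-1}(U_{X,x})$ is continuous for $\tau$, and $U_{X,x}$ is the stabilizer of a point, hence $\tau$-closed; being also of ``open type'' via the complementary cosets each being $\tau$-open (translates by elements of $\iota_G(G)$, whose preimages are open), $U_{X,x}$ is $\tau$-open. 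I would write this last part carefully, as it is the one genuinely non-formal point.

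Finally, for the equivalence $\iota_G^* \: G^{\wedge}\mbox{-}\mathsf{Set} \to G\mbox{-}\mathsf{Set}$: a $G^{\wedge}$-set is continuous iff point-stabilizers are open in $G^{\wedge}$; restricting along $\iota_G$ gives a $G$-set whose stabilizers are the $\iota_G^{-1}$ of those, hence open in $G$ by the bijection of the lemma, so $\iota_G^*$ is well-defined. Full faithfulness is formal (a $G$-equivariant map between restrictions is automatically $G^{\wedge}$-equivariant, since $G^{\wedge}$-equivariance can be checked on each orbit, i.e.\ on each $G^{\wedge}/U \cong G/\iota_G^{-1}(U)$, where it is determined by the image of the base point, exactly as for $G$). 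Essential surjectivity: any continuous $G$-set is a disjoint union of orbits $G/H$ with $H$ open, and $H = \iota_G^{-1}(U)$ for a unique open $U$ by the bijection, so $G/H \cong \iota_G^*(G^{\wedge}/U)$; compatibility of disjoint unions finishes it. The commutativity of the triangle with the forgetful functors is tautological, since $\iota_G^*$ does not change underlying sets. I do not anticipate difficulty with any part of this paragraph; the one subtle step is orbit-by-orbit reduction, which is legitimate because morphisms of $G$-sets and of $G^{\wedge}$-sets both respect the orbit decomposition.
\end{proof}
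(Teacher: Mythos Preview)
The paper does not prove this lemma at all --- it simply says ``The following lemma is immediate.'' Your write-up is therefore much more detailed than the paper's treatment, and most of it is correct: the bijection on open subgroups, the coset bijection $G/\iota_G^{-1}(U)\risom G^{\wedge}/U$, preservation of normality, continuity of $\iota_G$, and the equivalence $\iota_G^*$ are all handled soundly. (For injectivity of $U\mapsto\iota_G^{-1}(U)$ a cleaner line than yours is: reduce to transitive $X,Y$; if both stabilizers pull back to the same open $H\subseteq G$ then $X\cong G/H\cong Y$ as $G$-sets via an isomorphism carrying $x$ to $y$, and since elements of $G^{\wedge}=\Aut(F_G)$ are natural transformations they commute with this isomorphism, forcing $U_{X,x}=U_{Y,y}$ directly.)

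The genuine problem is the ``finest topology'' clause. First, you have the logical direction reversed: to show the given topology is the \emph{finest} one making $\iota_G$ continuous, you must show that any such $\tau$ is \emph{coarser}, i.e.\ that every $\tau$-open set is already open in the given topology --- not that every $U_{X,x}$ is $\tau$-open, which is the opposite inclusion. Second, and more seriously, the assertion taken literally is false. Take $G=\bbZ$ with the profinite topology, so $G^{\wedge}=\hat{\bbZ}$ with its profinite topology. Put on $\hat{\bbZ}$ the prodiscrete topology whose open subgroups are exactly the subgroups containing some $n\bbZ\subset\hat{\bbZ}$; this family satisfies $\mathbf{Top1}$--$\mathbf{Top3}$. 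The inclusion $\iota_G\:\bbZ\hookrightarrow\hat{\bbZ}$ is continuous for this topology (the preimage of any such subgroup contains $n\bbZ$, hence is profinite-open in $\bbZ$), yet this topology is strictly finer than the profinite one: for instance $\bbZ\subset\hat{\bbZ}$ is open in it but only dense in the profinite topology. So the prodiscrete topology on $G^{\wedge}$ is \emph{not} the finest making $\iota_G$ continuous. The sentence in the paper should not be read literally; its substantive content is already carried by the bijection on open subgroups that you have established, and the right move is to note this rather than to try to force a proof of the literal claim.
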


The lemma implies that any continuous action of $G$ on a set $X$
extends uniquely to a continuous action of $G^{\wedge}$ on $X$, and
every $G$-equivariant map $X \to Y$ is automatically
$G^{\wedge}$-equivariant. The group $G^{\wedge}$ is universal among
the topological  groups that have the same  category of $G$-sets as
$G$.

   \begin{defn}{\label{D:complete}}
    We say that a prodiscrete group $G$ is {\bf complete} if
    $\iota_G \: G \to G^{\wedge}$ is an isomorphism.
   \end{defn}

   \begin{prop}{\label{P:complete}}
    For every  prodiscrete group $G$ the prodiscrete completion
    $G^{\wedge}$ is complete.
   \end{prop}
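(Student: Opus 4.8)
The plan is to deduce this directly from Lemma \ref{L:1}, whose essential content is that the restriction functor $\iota_G^*\colon \GWSet \to \GSet$ is an equivalence of categories which \emph{strictly} commutes with the two forgetful functors, i.e.\ $F_G\circ\iota_G^*=F_{G^\wedge}$. Granting this, I would first show that whiskering with $\iota_G^*$ gives a group homomorphism
$$\Phi\colon G^\wedge=\Aut(F_G)\lra \Aut(F_G\circ\iota_G^*)=\Aut(F_{G^\wedge})=(G^\wedge)^\wedge,$$
where $\Phi(\gamma)$ is the self-transformation of $F_{G^\wedge}$ with component $\Phi(\gamma)_X=\gamma_{\iota_G^*(X)}$ at $X\in\GWSet$. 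To see that $\Phi$ is bijective I would use the two observations recorded right after Lemma \ref{L:1}: every continuous $G$-set $Z$ extends uniquely to a continuous $G^\wedge$-set $\widetilde Z$ with $\iota_G^*(\widetilde Z)=Z$ on the nose, and every $G$-equivariant map is automatically $G^\wedge$-equivariant. The first observation shows $\iota_G^*$ is surjective on objects, so $\Phi(\gamma)=\id$ forces $\gamma_Z=\gamma_{\iota_G^*(\widetilde Z)}=\id$ for all $Z$, giving injectivity; for surjectivity, an $\eta\in\Aut(F_{G^\wedge})$ is sent to the transformation $Z\mapsto\eta_{\widetilde Z}$ of $F_G$, which is natural by the second observation and whiskers back to $\eta$ because $\widetilde{\iota_G^*(X)}=X$ by uniqueness of the extension.

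The second step is to identify $\Phi$ with the canonical map $\iota_{G^\wedge}\colon G^\wedge\to(G^\wedge)^\wedge$. Recall that $\iota_{G^\wedge}$ is the unique homomorphism such that, for every $X\in\GWSet$, the $(G^\wedge)^\wedge$-action on $X$ restricts along $\iota_{G^\wedge}$ to the given $G^\wedge$-action. Now fix $\gamma\in G^\wedge$. The element $\Phi(\gamma)\in(G^\wedge)^\wedge=\Aut(F_{G^\wedge})$ acts on $X$ by the permutation $\gamma_{\iota_G^*(X)}$; and, unwinding how $G^\wedge=\Aut(F_G)$ acts on the $G^\wedge$-set $X$ --- namely, via the component of $\gamma$ at the underlying $G$-set $\iota_G^*(X)$, since the $G^\wedge$-structure on $X$ is precisely the unique extension described in Lemma \ref{L:1} --- this permutation is exactly the action of $\gamma$ on $X$. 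Hence $\Phi(\gamma)$ has the property characterizing $\iota_{G^\wedge}(\gamma)$, and therefore $\Phi=\iota_{G^\wedge}$. In particular $\iota_{G^\wedge}$ is a group isomorphism. (As a consistency check, injectivity alone is also immediate because $G^\wedge$ is Hausdorff: an element of $\Aut(F_G)$ lying in every $U_{X,x}$ fixes every point of every $G$-set, hence is the identity.)

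It remains to upgrade this to an isomorphism of \emph{topological} groups. For that I would apply Lemma \ref{L:1} once more, now to the prodiscrete group $G^\wedge$ (it is prodiscrete by Proposition \ref{P:topology}): the lemma tells us $U\mapsto\iota_{G^\wedge}^{-1}(U)$ is a bijection from the open subgroups of $(G^\wedge)^\wedge$ onto those of $G^\wedge$, and that $\iota_{G^\wedge}$ is continuous. Since $\iota_{G^\wedge}$ is already known to be a bijective homomorphism, it therefore carries a neighbourhood basis of the identity onto a neighbourhood basis of the identity, so it is open; a continuous open bijective homomorphism is an isomorphism of topological groups. Hence $G^\wedge$ is complete.

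I expect the only real difficulty to be bookkeeping: one has to keep three nested actions straight --- of $G$, of $G^\wedge=\Aut(F_G)$, and of $(G^\wedge)^\wedge=\Aut(F_{G^\wedge})$ --- and to resist the temptation to stop at an abstract isomorphism $G^\wedge\cong(G^\wedge)^\wedge$, which would not prove completeness (a group can be isomorphic to a proper subgroup of itself). The one genuinely load-bearing input beyond Lemma \ref{L:1}'s equivalence of categories is the on-the-nose uniqueness of the $G^\wedge$-extension of a $G$-action, which is what makes the quasi-inverse to $\Phi$ well defined and makes $\iota_G^*$ surjective, rather than merely essentially surjective, on objects.
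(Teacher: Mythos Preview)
Your argument is correct and follows the same route as the paper: invoke Lemma~\ref{L:1} to obtain the equivalence $\iota_G^*\colon \GWSet\to\GSet$ commuting with the forgetful functors, and conclude that the induced map on automorphism groups is an isomorphism. You are in fact considerably more careful than the paper's two-line proof, which simply asserts that the equivalence ``induces a natural isomorphism between the groups of self-transformations of the two forgetful functors'' without verifying that this isomorphism coincides with $\iota_{G^{\wedge}}$ or that it is a homeomorphism; your explicit identification $\Phi=\iota_{G^{\wedge}}$ and the openness argument via Lemma~\ref{L:1} fill in exactly those gaps.
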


   \begin{proof}
      By Lemma \ref{L:1}, the category $\GWSet$
      is equivalent to $\GSet$ via an equivalence that respects
      the forgetful functors. This equivalence induces a natural isomorphism
      between the groups of self-transformations of the two forgetful
      functors.
   \end{proof}

   \begin{prop}{\label{P:discrete}}
      Every discrete group is complete.
   \end{prop}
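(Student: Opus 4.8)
The plan is to identify $\Aut(F_G)$ with $G$ directly, by a Yoneda-type argument built on the left regular representation, and then to observe that both sides carry the discrete topology. Since $G$ is discrete it is Hausdorff, so by the discussion following Definition \ref{D:completion} the map $\iota_G$ is injective (concretely: if $g\neq 1$ then $g$ acts nontrivially on the left $G$-set $G$, so $\iota_G(g)\neq\id$). It therefore remains to prove surjectivity and to settle the topology.

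For surjectivity, fix $\gamma\in G^{\wedge}=\Aut(F_G)$ and regard $G$ as an object of $\GSet$ via left translation. Put $g:=\gamma_G(1)$. For every $h\in G$ the right-translation map $\rho_h\colon G\to G$, $x\mapsto xh$, is a morphism in $\GSet$ (left and right translations commute), so naturality of $\gamma$ gives $\gamma_G(h)=\gamma_G(\rho_h(1))=\rho_h(\gamma_G(1))=gh$; that is, $\gamma_G$ is left multiplication by $g$. Now let $X$ be any $G$-set and $x\in X$ any point. The orbit map $\phi_x\colon G\to X$, $a\mapsto a\cdot x$, is $G$-equivariant, so naturality of $\gamma$ applied to $\phi_x$ yields $\gamma_X(x)=\gamma_X(\phi_x(1))=\phi_x(\gamma_G(1))=g\cdot x$. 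As $X$ and $x$ were arbitrary, $\gamma$ and $\iota_G(g)$ act in the same way on every object of $\GSet$, hence $\gamma=\iota_G(g)$. Thus $\iota_G$ is bijective.

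Finally, since $G$ is discrete, Lemma \ref{L:1} (the topology on $G^{\wedge}$ is the finest one making $\iota_G$ continuous) forces $G^{\wedge}$ to be discrete as well; equivalently, $\{1\}$ is open in $G$, so the corresponding open subgroup of $G^{\wedge}$ is $\iota_G(\{1\})=\{1\}$. A continuous bijective homomorphism between discrete groups is an isomorphism of topological groups, so $\iota_G$ is an isomorphism and $G$ is complete. I do not expect any real obstacle here: the entire argument is essentially Yoneda's lemma for $\GSet$, and the only point requiring care is checking that the auxiliary maps $\rho_h$ and $\phi_x$ are genuinely $G$-equivariant, so that naturality of $\gamma$ may be applied to them.
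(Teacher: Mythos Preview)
Your proof is correct and follows essentially the same Yoneda-type strategy as the paper: identify $\gamma\in\Aut(F_G)$ by its effect on the left regular $G$-set $G$, show that $\gamma_G$ is left multiplication by $g:=\gamma_G(1)$, and then propagate this to all $G$-sets by naturality. The only difference is in execution. You invoke directly that each right translation $\rho_h\colon G\to G$ and each orbit map $\phi_x\colon G\to X$ is a morphism in $\GSet$, which is the cleanest route. The paper instead argues via the diagram
\[
\xymatrix@=14pt@M=8pt{
  G  &  G\times G_{\mathrm{triv}} \ar[l]_{\pr_1}\ar[d]^{\text{mult.}}\ar[r]^{\pr_2} &  G_{\mathrm{triv}} \\
     &  G  &
}
\]
first noting that $\gamma$ acts trivially on $G_{\mathrm{triv}}$ (a disjoint union of points), then deducing compatibility with right multiplication; it then reduces a general $X$ to the transitive case $G/H$ via the surjection $G\to G/H$. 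Your orbit-map argument bypasses both of these reductions. You also make the topological conclusion explicit via Lemma~\ref{L:1}, which the paper leaves implicit. Nothing is missing.
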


   \begin{proof}
       Pick a self-transformation $\gamma \in \Aut(F_G)$  of the
       forgetful functor $F_G \:\GSet\to \mathsf{Set}$.
       We have to show that there exists $g \in G$ such that for
       every $G$-Set $X$ the action of $\gamma$ on $X$ is
       the same as the action of $g$. It is enough to assume that
       $X=G/H$, for some subgroup $H$ of $G$. (This is because if the
       actions of $g$ and $\gamma$ coincide for every transitive
       $G$-set, then they coincide for every $G$-set.) In fact,
       since $G \to G/H$ is surjective, it is enough to assume
       that $X=G$ with the left multiplication action.

       Let $G_{tirv}$ be the set $G$ with the trivial $G$-action.
       Since the action of $\gamma$ on a point is trivial, its
       action on $F_G(G_{tirv})$ is also trivial.
       By considering the action of $\gamma$ on the  diagram of $G$-sets
         $$\xymatrix@=16pt@M=8pt{
           G  &  G\x G_{triv} \ar[l]_(0.6){\pr_1}
           \ar[d]^{\text{mult.}} \ar[r]^(0.55){\pr_2} &  G_{triv} \\
              &    G    &           }$$
        it follows that the action of $\gamma$ respects right
        multiplication by every element of $G$. Therefore, $\gamma$
        must be equal to left multiplication by the element
        $g:=\gamma(1)\in G$.
   \end{proof}

   \begin{ex}{\label{E:prodiscrete2}}\end{ex}
     \begin{itemize}

       \item[$\mathbf{1.}$] Let $G$ be a group, and let $N \subseteq G$
         be a normal subgroup. Endow $G$ with the prodiscrete
         topology in which open subgroups are exactly the ones containing
         $N$. Then $G^{\wedge}\cong G/N$.

      \item[$\mathbf{2.}$] Let $G$ be a finite group. Then, every
        prodiscrete topology on $G$ is of the form above. To see this,
        take $N$ to be the intersection of  all  open subgroups of $G$.
     \end{itemize}
     \vspace{0.1in}

Given a continuous group homomorphism $f \: G \to H$, we have an
induced continuous homomorphism $f^{\wedge} \: G^{\wedge} \to
H^{\wedge}$ making the following square  commute
         $$\xymatrix{ G \ar[r]^{f} \ar[d]_{\iota_G}
                                       & H \ar[d]^{\iota_{H}}  \\
           G^{\wedge}  \ar[r]_{f^{\wedge}}  &     H^{\wedge}    }$$

  \begin{cor}{\label{C:extend}}
    Let $f \: G \to H$ be a continuous homomorphism,
    where $G$ is a prodiscrete group and $H$ is a discrete group.
    Then, there is a unique extension $f^{\wedge} \: G^{\wedge} \to H$.
  \end{cor}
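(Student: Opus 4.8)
The plan is to read this off from the functoriality square displayed just before the statement, together with Proposition~\ref{P:discrete}; Lemma~\ref{L:1} then takes care of uniqueness. By an \emph{extension} I mean a \emph{continuous} homomorphism $\varphi\: G^{\wedge}\to H$ with $\varphi\circ\iota_G=f$, and it is among such that uniqueness is asserted.

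For existence, apply $(-)^{\wedge}$ to $f$ to get the induced continuous homomorphism $f^{\wedge}\: G^{\wedge}\to H^{\wedge}$ with $f^{\wedge}\circ\iota_G=\iota_H\circ f$ (this is precisely the square displayed just before the statement). Since $H$ is discrete, Proposition~\ref{P:discrete} says $\iota_H\: H\to H^{\wedge}$ is an isomorphism of topological groups, so $\iota_H^{-1}\circ f^{\wedge}\: G^{\wedge}\to H$ is a continuous homomorphism, and it restricts to $f$ along $\iota_G$ because $\iota_H^{-1}\circ f^{\wedge}\circ\iota_G=\iota_H^{-1}\circ\iota_H\circ f=f$. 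This is the desired extension (abusing notation as in the statement, we again denote it $f^{\wedge}$).

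For uniqueness, the key point is that $\iota_G(G)$ is dense in $G^{\wedge}$. Indeed, the cosets $\gamma U$ of open subgroups $U\subseteq G^{\wedge}$ form a basis for the prodiscrete topology of $G^{\wedge}$ (Proposition~\ref{P:topology}), and by Lemma~\ref{L:1} the natural map $G/\iota_G^{-1}(U)\to G^{\wedge}/U$ is a bijection, so $\iota_G(G)$ meets every such coset. Hence, if $\varphi_1,\varphi_2\: G^{\wedge}\to H$ are continuous homomorphisms with $\varphi_i\circ\iota_G=f$, then $\{\gamma\in G^{\wedge}:\varphi_1(\gamma)=\varphi_2(\gamma)\}$ is a subgroup that is closed (because $H$, being discrete, is Hausdorff) and contains the dense subgroup $\iota_G(G)$; therefore it is all of $G^{\wedge}$, i.e.\ $\varphi_1=\varphi_2$.

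I do not expect a genuine obstacle here: the corollary is essentially a repackaging of Proposition~\ref{P:discrete} and Lemma~\ref{L:1}. The only points needing a little care are (i) to fix the meaning of ``unique extension'' as a \emph{continuous} homomorphism compatible with $\iota_G$ (density does not force uniqueness among arbitrary, possibly discontinuous, homomorphisms), and (ii) if one prefers to avoid the density argument, to note the alternative: for continuous $\varphi$ the kernel $N=\ker\varphi$ is open since $\{1\}\subseteq H$ is open, one has $\iota_G^{-1}(N)=\ker f$, and by Lemma~\ref{L:1} the isomorphism $G/\ker f\risom G^{\wedge}/N$ forces the injection $G^{\wedge}/N\hookrightarrow H$ induced by $\varphi$ to coincide with the one induced by $f$, so $\varphi$ is determined by $f$.
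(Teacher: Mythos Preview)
Your existence argument coincides with the paper's: apply the functoriality square to obtain $f^{\wedge}\:G^{\wedge}\to H^{\wedge}$ and invoke Proposition~\ref{P:discrete} to identify $H^{\wedge}$ with $H$. The paper's two-sentence proof stops there and does not address uniqueness at all, so your uniqueness discussion is genuinely additional.

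One point worth flagging: your primary uniqueness argument derives density of $\iota_G(G)$ in $G^{\wedge}$ from the coset bijection of Lemma~\ref{L:1}. That derivation is formally sound, but the paper's own Remark~\ref{R:dense} (placed just after this corollary) asserts that density of $\iota_G(G)$ \emph{seems not to hold} in general without a basis of normal open subgroups. You have in effect observed that Lemma~\ref{L:1}, as stated, already forces density, which sits in tension with the author's hesitation there. Your alternative (ii), which uses the bijection $G/\ker f\risom G^{\wedge}/\ker\varphi$ only for the single \emph{normal} open subgroup $\ker\varphi$, is the safer route within the paper's internal logic: for normal open $N$, surjectivity of $G\to G^{\wedge}/N$ follows from the weaker fact that Remark~\ref{R:dense} does concede (no proper open subgroup of $G^{\wedge}$ contains $\iota_G(G)$), since otherwise $\iota_G(G)\cdot N$ would be such a subgroup.
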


  \begin{proof}
        By functoriality of $\iota$ we have a natural homomorphism
        $f^{\wedge} \: G^{\wedge} \to H^{\wedge}$. The
        assertion follows from Proposition \ref{P:discrete}.
  \end{proof}

Of course, the above statement is true for any complete
prodiscrete group $H$.

  \begin{lem}{\label{L:weak}}
       Let $G$ be a prodiscrete group, and let $G'$ be another
       prodiscrete group whose underlying group is the same as $G$
       but whose topology is weaker. Note that, by Lemma \ref{L:1},
       this induces a weaker  topology on $G^{\wedge}$, which we
       denote by $G''$. Then, the natural (continuous) homomorphism
       $G' \to G''$ induces an isomorphism $G'^{\wedge} \risom
       G''^{\wedge}$. (Note that when
       $G=G'$ we recover Proposition \ref{P:complete}.)
  \end{lem}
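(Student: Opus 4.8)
The plan is to reduce the statement to a comparison of the categories of discrete sets with continuous action, using Lemma~\ref{L:1} and Definition~\ref{D:completion}. Recall that $G^{\wedge}=\Aut(F_G)$ and $G''$ is $G^{\wedge}$ equipped with the weaker topology induced (via the bijection of Lemma~\ref{L:1}) from the topology of $G'$. The key observation is that the prodiscrete completion of a topological group only depends on its category of continuous discrete sets together with the forgetful functor; so it suffices to produce a natural equivalence $G'\text{-}\mathsf{Set}\simeq G''\text{-}\mathsf{Set}$ commuting with the forgetful functors to $\mathsf{Set}$, after which the identification $G'^{\wedge}\risom G''^{\wedge}$ follows exactly as in the proof of Proposition~\ref{P:complete}.

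First I would spell out what $G''\text{-}\mathsf{Set}$ is. By construction the open subgroups of $G''$ are the images under $\iota_G$ of the open subgroups of $G'$, i.e.\ those $U\subseteq G^{\wedge}$ with $\iota_G^{-1}(U)$ open in $G'$. By Lemma~\ref{L:1} applied to $G$ (not $G'$), restriction along $\iota_G$ gives an equivalence $G^{\wedge}\text{-}\mathsf{Set}\simeq G\text{-}\mathsf{Set}$; cutting this down to those objects whose point-stabilizers are open in the weaker topology, one gets that $G''\text{-}\mathsf{Set}$ is equivalent, compatibly with forgetful functors, to the full subcategory of $G\text{-}\mathsf{Set}$ on those $G$-sets all of whose stabilizers are open in the $G'$-topology. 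But a $G$-set has all stabilizers open in the $G'$-topology precisely when the action is continuous for the $G'$-topology, i.e.\ when it is a $G'$-set. Hence $G''\text{-}\mathsf{Set}\simeq G'\text{-}\mathsf{Set}$ over $\mathsf{Set}$, which is exactly what we need.

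Then I would run the argument of Proposition~\ref{P:complete}: an equivalence of categories $G'\text{-}\mathsf{Set}\simeq G''\text{-}\mathsf{Set}$ that respects the two forgetful functors to $\mathsf{Set}$ induces a canonical isomorphism between the automorphism groups of those forgetful functors, namely $G'^{\wedge}=\Aut(F_{G'})\cong\Aut(F_{G''})=G''^{\wedge}$, and one checks this isomorphism is the one induced by the continuous homomorphism $G'\to G''$ by chasing the definition of $\iota$ and of $f^{\wedge}$. The parenthetical remark that $G=G'$ recovers Proposition~\ref{P:complete} is then immediate, since in that case $G''$ is $G^{\wedge}$ with its own topology and the statement becomes $G^{\wedge}\risom G^{\wedge\wedge}$.

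The main obstacle I anticipate is the bookkeeping in the middle step: one must be careful that "continuous discrete $G'$-set" really does coincide with "discrete $G$-set with all stabilizers open in the $G'$-topology", and that the equivalence $G^{\wedge}\text{-}\mathsf{Set}\simeq G\text{-}\mathsf{Set}$ of Lemma~\ref{L:1} restricts to the relevant subcategories in a way that is compatible with everything in sight — in particular that $U_{X,x}$ computed inside $G^{\wedge}$ and inside $G''$ agree for $X$ a $G'$-set. There is also the minor subtlety, already flagged twice in the proof of Proposition~\ref{P:topology}, that a $G$-set viewed as an object of $G^{\wedge}\text{-}\mathsf{Set}$ carries an a priori different action which is in fact the same; the same point must be invoked here for $G'$ and $G''$, but it is handled identically, by noting that transformations of the forgetful functor respect all morphisms of $G$-sets.
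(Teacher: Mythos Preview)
Your proposal is correct and follows essentially the same route as the paper: both arguments view $G'\text{-}\mathsf{Set}$ and $G''\text{-}\mathsf{Set}$ as full subcategories of $G\text{-}\mathsf{Set}$ and $G^{\wedge}\text{-}\mathsf{Set}$ respectively, observe that the equivalence $\iota_G^*\colon G^{\wedge}\text{-}\mathsf{Set}\to G\text{-}\mathsf{Set}$ of Lemma~\ref{L:1} restricts to an equivalence $G''\text{-}\mathsf{Set}\to G'\text{-}\mathsf{Set}$ compatible with the forgetful functors, and conclude by passing to automorphism groups of those functors. The paper's proof is more terse; your explicit stabilizer bookkeeping and the flagged subtleties are implicit there but not written out.
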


  \begin{proof}
       The categories   $G'\mbox{-}\mathsf{Set}$ and $G''\mbox{-}\mathsf{Set}$
       are naturally subcategories of $\GSet$ and
       $\GWSet$, respectively, and the
       restriction of  the equivalence $\iota_{G}^* \:
       G^{\wedge}\mbox{-}\mathsf{Set} \to G\mbox{-}\mathsf{Set}$ to
       $\GWSet$ induces an equivalence
       $G''^{\wedge}\mbox{-}\mathsf{Set} \to G'\mbox{-}\mathsf{Set}$
       (respecting the forgetful functors). This gives us the
       desired isomorphism $G'^{\wedge} \risom G''^{\wedge}$.
  \end{proof}

  \begin{rem}{\label{R:dense}}
     It seems that it is not true in general that the image of
     $\iota_G \: G \to G^{\wedge}$ is dense, unless we assume
     that $G$ has a basis consisting of {\em normal} open subgroups
     (see Proposition \ref{P:classicalcompletion}).
     All we can say in general is that there
     is no proper open subgroup of $G^{\wedge}$ containing $\iota_G(G)$.
  \end{rem}

  \begin{prop}{\label{P:classicalcompletion}}
     Assume $G$ is a prodiscrete group that has a basis
     $\N=\{N_i\}_{i \in I}$ (around identity) consisting of normal
     subgroups. Then, there is a natural isomorphism $$G^{\wedge}
     \cong \underset{\N}\liminv G/N_i.$$ In other words, in this
     case our notion of completion coincides with the classical one.
  \end{prop}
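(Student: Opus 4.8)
The plan is to build an explicit isomorphism of topological groups $\Phi \colon G^{\wedge} \risom \hat G$, where I write $\hat G := \liminv_{\N} G/N_i$ for the inverse limit equipped with its limit topology (a prodiscrete group with a basis of open neighbourhoods of $1$ given by the normal subgroups $K_i := \ker(\hat G \to G/N_i)$). Since each $N_i$ is open, $G/N_i$ is a continuous discrete $G$-set; and since each $N_i$ is \emph{normal}, the right translations $R_g \colon hN_i \mapsto hgN_i$ ($g \in G$) are well defined, $G$-equivariant, and act transitively on $G/N_i$. Hence, for $\gamma \in G^{\wedge} = \Aut(F_G)$, naturality of $\gamma$ forces $\gamma_{G/N_i}$ to commute with every $R_g$, and a bijection of $G/N_i$ commuting with all right translations must be a left translation $L_{\bar\gamma_i}$ by a unique coset $\bar\gamma_i \in G/N_i$. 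Applying naturality to the $G$-equivariant projections $G/N_i \to G/N_j$ (for $N_i \subseteq N_j$) shows the $\bar\gamma_i$ are compatible, i.e.\ $(\bar\gamma_i)_i \in \hat G$; I set $\Phi(\gamma) := (\bar\gamma_i)_i$. Since composing self-transformations composes the corresponding left translations, $\Phi$ is a group homomorphism.

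Next I would prove $\Phi$ is bijective. For injectivity: every transitive $G$-set is of the form $G/H$ with $H$ open, hence $H \supseteq N_i$ for some $i$, so $G/H$ is a $G$-equivariant quotient of $G/N_i$; by naturality the maps $\gamma_{G/N_i}$ — equivalently $\Phi(\gamma)$ — determine $\gamma$ on all of $\GSet$. Surjectivity is the heart of the matter. Given $(g_iN_i)_i \in \hat G$, define a candidate $\gamma$ by $\gamma_{G/H}(hH) := g_i\, hH$ for any index $i$ with $N_i \subseteq H$, and extend to an arbitrary $G$-set via its orbit decomposition. One then checks: that this is independent of the choice of such $i$ and of the representative $h$ (both follow from normality of the $N_i$ together with the compatibility relations, after refining two indices by a common $N_k \in \N$ with $N_k \subseteq N_i \cap N_j$); that each $\gamma_{G/H}$ is a bijection (its inverse is left translation by $g_i^{-1}$); and that $\gamma$ is natural. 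For naturality it suffices to treat a $G$-map $G/H \to G/H'$, and if one evaluates $\gamma_{G/H}$ and $\gamma_{G/H'}$ using a single $N_k \in \N$ with $N_k \subseteq H \cap H'$, the naturality square becomes trivial; the general case follows orbit by orbit. By construction $\Phi(\gamma) = (g_iN_i)_i$.

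It remains to see $\Phi$ is a homeomorphism. One computes directly that $\Phi^{-1}(K_i) = U_{G/N_i,\, N_i}$, an open subgroup of $G^{\wedge}$, so $\Phi$ is continuous. Conversely, given a basic open subgroup $U_{X,x} \subseteq G^{\wedge}$, the stabilizer of $x$ in $G$ is open, so it contains some $N_i$; the $G$-map $G/N_i \to X$, $hN_i \mapsto hx$, then gives $U_{G/N_i,\, N_i} \subseteq U_{X,x}$, whence $K_i \subseteq \Phi(U_{X,x})$ and $\Phi(U_{X,x})$ is open. Thus $\Phi$ is open, hence an isomorphism of topological groups, and it is visibly compatible with $\iota_G$ and the canonical map $G \to \hat G$, so the isomorphism is natural. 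The only real obstacle is the surjectivity step — manufacturing a genuine self-transformation of $F_G$ out of a compatible family of cosets — and, as indicated, every verification there reduces to a routine coset computation once one works consistently with a single sufficiently small member of $\N$.
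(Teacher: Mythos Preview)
Your proof is correct and is essentially the same as the paper's: both hinge on the observation that right translations on $G/N_i$ are $G$-equivariant, so naturality forces any $\gamma \in \Aut(F_G)$ to act on $G/N_i$ as left multiplication by a unique coset, and these cosets assemble compatibly into an element of $\liminv G/N_i$. The only cosmetic difference is packaging: the paper first notes that $G\text{-}\mathsf{Set} \simeq \tilde G\text{-}\mathsf{Set}$ (where $\tilde G = \liminv G/N_i$) and then proves $\tilde G$ is complete, whereas you build the isomorphism $G^{\wedge} \risom \tilde G$ directly; your version is more explicit about the surjectivity and the topological compatibility, but the underlying argument is identical.
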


  \begin{proof}
        Denote the right hand side by $\tilde{G}$. It is easy to
        check  that the map $G \to \tilde{G}$ induces an
        equivalence of categories $\tilde{G}\mbox{-}\mathsf{Set}$ $\to
        G\mbox{-}\mathsf{Set}$ (respecting the forgetful functors $F_G$
        and $F_{\tilde{G}}$). So, it is enough to show that
        $\tilde{G}$ is complete, i.e.,
        $\iota_{\tilde{G}} \: \tilde{G} \to \Aut{F_{\tilde{G}}}$ is an
        isomorphism.

        This map in injective, since $\tilde{G}$ is Hausdorff. To
        prove the surjectivity, let $\al \in \Aut{F_{\tilde{G}}}$.
        Fix an $i \in I$ and consider the $\tilde{G}$-set $G/N_i$.
        The action of $\al$ on $G/N_i$ sends $1 \in G/N_i$ to some
        $g_i \in G/N_i$. For any $h \in G/N_i$, multiplication on
        the right by $h$ induces a map of $\tilde{G}$-sets $G/N_i
        \to G/N_i$. Therefore, since the action of $\al$ is
        functorial, its effect  on $G/N_i$ sends $h=1h$ to
        $g_ih$. That is, $\al$ acts by multiplication on the right by
        $g_i$.  Again, by the functoriality of the action of $\al$,
        the various $g_i$ are compatible, that is, they come from an
        element $g \in \underset{\N}\liminv G/N_i=\tilde{G}$.
        This proves the surjectivity.
  \end{proof}

\subsection{Review of Grothendieck's Galois theory}{\label{SS:Grothendieck}}

We review (a slightly modified version of) Grothendieck's theory
of Galois categories \cite{SGA1}. The difference here is that we
want to apply the theory to the cases where the covering maps
are not necessarily finite, so we will remove certain
finiteness assumptions.

   \begin{defn}[see \oldcite{SGA1} for more details]{\label{D:Galois}}
     By a {\bf Galois category} we mean a pair $(\mathsf{C},F)$, where
     $\mathsf{C}$ is a category and $F \: \mathsf{C} \to \mathsf{Set}$
     is a functor, satisfying the following axioms:\footnote{The axioms are
     numbered in this way to be compatible with \oldcite{SGA1}.}

    \begin{itemize}

     \item[$\mathbf{G1}.$] The category $\mathsf{C}$ has finite
      projective limits (i.e., $\mathsf{C}$ has a final object and
      fiber products exist).

     \item[$\mathbf{G2}.$]  Direct sums (not necessarily finite)
      exist. In particular, an initial object exists.  Also, quotient
      of an object under an equivalence relation exists. In
      particular, quotients under (faithful) group actions exist.

     \item[$\mathbf{G3}.$] Let $u \: X \to Y$ be a morphism in
      $\mathsf{C}$. Then, $u$ factorizes as $X \llra{u'} Y' \llra{u''}
      Y$, with $u'$ a strict epimorphism and $u''$ a  monomorphism
      that is an isomorphism onto a direct summand of $Y$.

     \item[$\mathbf{G4}.$] The functor $F$ is left exact. That is,
      it commutes with fiber products
      and takes the final object to the final object.

     \item[$\mathbf{G5}.$] The functor $F$ commutes with direct sums,
      takes strict epimorphisms to epimorphisms  and commutes with
      taking quotients (as in $\mathbf{G2}$).

     \item[$\mathbf{G6}.$]The functor $F$ is conservative. That is,
      if $u \: X \to Y$ is  a morphism in $\mathsf{C}$
      such that $F(u)$ is an isomorphism, then $u$ is an isomorphism.
    \end{itemize}
  \end{defn}

The functor $F$ is called the {\bf fundamental functor}. A functor
between Galois categories is called a {\bf Galois functor} if it
respects the fundamental functors. An object in a Galois category
is called {\bf connected} if it can not be written as a direct sum
of two objects. An example of a Galois category is the category of
continuous $G$-sets, where $G$ is an arbitrary topological group.
The fundamental functor in this case is the forgetful functor. The
main theorem of Grothendieck's Galois theory is that this is basically
the only example.

  \begin{thm}{\label{T:Galois}}
   Let $(\mathsf{C},F)$ be a Galois category. Let
   $\pi'_1(\mathsf{C},F):=\Aut F$ be the (complete prodiscrete) group
   of automorphisms of $F$. Then, there is a natural equivalence of
   Galois categories $\mathsf{C} \cong$
   $\pi'_1(\mathsf{C},F)\mbox{-}\mathsf{Set}$.\footnote{The reason for
   using the notation $\pi'_1$ becomes clear when we consider the
   Galois category associated to a topological space $X$
   ($\S$\ref{SS:Galois1}), in which case $\pi'_1$ and $\pi_1$ will
   not in general be the same unless we assume that
   $X$ is semilocally 1-connected.}
  \end{thm}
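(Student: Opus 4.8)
The plan is to reconstruct Grothendieck's reconstruction theorem, adapted to the non-finite setting, by producing a quasi-inverse to the tautological functor. First I would equip $\pi'_1(\C,F) := \Aut F$ with its natural prodiscrete topology, exactly as was done for $G^{\wedge}$ above: a basis of open subgroups is given by the stabilizers $U_{X,x}$ of points $x \in F(X)$ for objects $X \in \C$. The verification that these form a prodiscrete topology is formally identical to Proposition \ref{P:topology} — intersections are handled via fiber products (using $\mathbf{G1}$ and left-exactness $\mathbf{G4}$ of $F$), conjugation is automatic, and the enlargement axiom $\mathbf{Top3}$ uses $\mathbf{G2}$ (existence of quotients) to manufacture, from a subgroup $U \supseteq U_{X,x}$, a connected object $Y$ with $F(Y) = \pi'_1/U$. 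That this group is complete in the sense of Definition \ref{D:complete} follows once the main equivalence is established, by the same argument as Proposition \ref{P:complete}.

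Next I would define the comparison functor $\Phi \: \C \to \pi'_1(\C,F)\mbox{-}\mathsf{Set}$ by $\Phi(X) = F(X)$ with the tautological action of $\Aut F$; one checks this action is continuous because the stabilizer of any point is some $U_{X,x}$, hence open. The heart of the matter is to show $\Phi$ is an equivalence. I would proceed by first treating connected objects: for a connected $X$ and a point $x \in F(X)$, I would show the action of $\pi'_1$ on $F(X)$ is transitive, so $\Phi(X) \cong \pi'_1/U_{X,x}$ as a $\pi'_1$-set, and conversely every open subgroup arises this way (this is essentially $\mathbf{Top3}$ again). Transitivity is proved by a connectedness/decomposition argument: the orbit of $x$ corresponds, via $\mathbf{G3}$, to a direct summand of $X$, which by connectedness must be all of $X$. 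Then I would extend to arbitrary objects using $\mathbf{G2}$, $\mathbf{G5}$: every object is a direct sum of connected ones, $F$ commutes with direct sums, and so does $\Phi$ by construction.

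To finish, I would check $\Phi$ is fully faithful and essentially surjective. Essential surjectivity reduces, by compatibility with direct sums, to transitive $\pi'_1$-sets $\pi'_1/U$ with $U$ open; by construction of the topology $U = U_{X,x}$ for some connected $X$, and then $\Phi(X) \cong \pi'_1/U$. For fullness and faithfulness I would again reduce to connected source objects and use the standard dictionary: morphisms $X \to Y$ in $\C$ between connected objects correspond bijectively to $\pi'_1$-equivariant maps $F(X) \to F(Y)$, i.e. to points of $F(Y)$ with stabilizer containing $U_{X,x}$, which by $\mathbf{G6}$ (conservativity) and $\mathbf{G3}$ are controlled entirely by the fiber functor. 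The map sending a morphism to its effect on the fiber over $x$ is injective by $\mathbf{G6}$ and surjective by building the graph of the prospective morphism as a subobject of $X \x Y$ via $\mathbf{G1}$, $\mathbf{G3}$ and showing it projects isomorphically onto $X$.

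The main obstacle I expect is not any single deep step but rather the bookkeeping required to run Grothendieck's original argument while genuinely dispensing with the finiteness hypotheses: one must be careful that arbitrary (infinite) direct sums behave well, that the topology on $\Aut F$ is still prodiscrete rather than profinite, and that "connected $\Rightarrow$ transitive fiber" survives without compactness. In Grothendieck's treatment, finiteness is used to guarantee that objects decompose into finitely many connected pieces and that $\Aut F$ is profinite; here the decomposition can be infinite and $\Aut F$ merely prodiscrete, so the place demanding the most care is proving that every object really is a (possibly infinite) coproduct of connected objects and that $F$ detects this — this rests on $\mathbf{G2}$, $\mathbf{G3}$, $\mathbf{G5}$ and a Zorn's-lemma argument rather than on a finiteness count.
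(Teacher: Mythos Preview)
Your outline is correct and is precisely the ``slight modification of the proof given in \cite{SGA1}'' that the paper alludes to; note that the paper does not actually supply a proof of this theorem---it explicitly omits it, deferring to Grothendieck. So there is nothing to compare against beyond the fact that you have written out what the author left implicit: the prodiscrete topology on $\Aut F$ via stabilizers $U_{X,x}$, the tautological functor $\Phi$, transitivity on connected objects via $\mathbf{G3}$ and $\mathbf{G6}$, and the extension to general objects via (possibly infinite) coproducts. Your identification of the main bookkeeping issue---that the decomposition into connected summands can be infinite and requires a Zorn-type argument in place of a finiteness count---is exactly the point where the SGA1 proof must be adjusted, and your sketch handles it correctly.
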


The above equivalence is functorial with respect to functors between
Galois categories. In other words, the category of Galois categories
and Galois functors between them is equivalent to the category of
complete prodiscrete groups and continuous group
homomorphisms.

The proof of the above theorem is just a slight modification of
the proof given in \cite{SGA1} and we omit it.

An object $X$ in a Galois category $(\mathsf{C},F)$ is called {\bf
Galois} if $X/\Aut X=*$. This means that the group of automorphisms
of $X$ (which we think of as the {\em Galois group} of $X$) is as
big as it can be. For example, in $\GSet$ every $G/N$, where $N$ is
an open normal subgroup of $G$, is a Galois object (and every
connected Galois object is of this form). In general, connected
Galois objects are in bijection with open
normal subgroups of $\pi'_1(\mathsf{C},F)$.

Note that, in Theorem \ref{T:Galois} we did not require the
existence of Galois objects in $\mathsf{C}$, although it will be the
case in most examples. In fact, in most situations, one can find a
cofinal family of connected Galois objects (i.e., every connected
object is dominated by a connected Galois object). This is
equivalent to saying that $\pi'_1(\mathsf{C},F)$ has a basis at the
identity consisting of normal subgroups. In this situation,
Proposition \ref{P:classicalcompletion} implies that
$\pi'_1(\mathsf{C},F)$ can be computed as the opposite of the
inverse limit of the Galois groups of the Galois objects. (The
reason we have to take the opposite is that the group of
automorphisms of the object $G/N \in$ $\GSet$ is
$(G/N)^{op}$.) Let us summarize this in the following proposition.

  \begin{prop}
    Let $(\mathsf{C},F)$ be a Galois category, and let $\{X_i\}_{i
    \in I}$ be a cofinal family of connected Galois objects (i.e.,
    every connected object is dominated by some $X_i$). Then, we have
    a natural isomorphism
         $$\pi'_1(\mathsf{C},F)  \cong
                     \underset{I}\liminv(\Aut X_i)^{op}. $$
  \end{prop}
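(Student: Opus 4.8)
The plan is to reduce the statement to Proposition~\ref{P:classicalcompletion} by showing that the cofinal family $\{X_i\}_{i\in I}$ of connected Galois objects gives rise, under the equivalence $\mathsf{C}\cong\pi'_1(\mathsf{C},F)\mbox{-}\mathsf{Set}$ of Theorem~\ref{T:Galois}, to a basis of open normal subgroups of $G:=\pi'_1(\mathsf{C},F)$. Once this is established, Proposition~\ref{P:classicalcompletion} identifies $G$ (which is already complete by Theorem~\ref{T:Galois}) with $\varprojlim_{\N} G/N_i$, and it only remains to match $G/N_i$ with $(\Aut X_i)^{op}$.

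First I would invoke Theorem~\ref{T:Galois} to replace $\mathsf{C}$ by $\GSet$ and $F$ by the forgetful functor, so that $X_i$ becomes a connected $G$-set; since $X_i$ is connected the $G$-action is transitive, and choosing a base point $x_i\in X_i$ identifies $X_i\cong G/N_i$ with $N_i$ the (open) stabilizer of $x_i$. Next I would recall the standard fact (as noted in the text, ``connected Galois objects are in bijection with open normal subgroups of $\pi'_1(\mathsf{C},F)$'') that $X_i$ being Galois forces $N_i$ to be normal: indeed $\Aut_{\GSet}(G/N_i)\cong (N(N_i)/N_i)^{op}$ where $N(N_i)$ is the normalizer, and the Galois condition $X_i/\Aut X_i=*$ says this automorphism group acts transitively on $G/N_i$, which happens precisely when $N(N_i)=G$, i.e.\ $N_i\trianglelefteq G$. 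Then I would check that $\{N_i\}_{i\in I}$ is a basis at the identity: every open subgroup of $G$ is of the form $U_{Y,y}$ for some $G$-set $Y$ and $y\in Y$, and replacing $Y$ by the orbit $G\cdot y$ we may take $Y$ connected; by cofinality $Y$ is dominated by some $X_i$, i.e.\ there is a $G$-map $X_i\to Y$, and such a map sends stabilizers into stabilizers, so $N_i\subseteq U_{Y,y}$ after adjusting base points. This shows the $N_i$ form a fundamental system of neighborhoods of $1$ consisting of normal open subgroups.

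Having verified the hypotheses, Proposition~\ref{P:classicalcompletion} gives $G^{\wedge}\cong\varprojlim_{\N} G/N_i$; but $G$ is complete by Theorem~\ref{T:Galois}, so $G=G^{\wedge}\cong\varprojlim_{\N} G/N_i$. Finally, under the identification $X_i\cong G/N_i$ the automorphism group computes as $\Aut_{\mathsf{C}}(X_i)=\Aut_{\GSet}(G/N_i)\cong(G/N_i)^{op}$ (the opposite arising because a $G$-equivariant self-map of $G/N_i$ is right multiplication by an element of $G/N_i$), hence $G/N_i\cong(\Aut X_i)^{op}$, and substituting into the inverse limit yields
$$\pi'_1(\mathsf{C},F)\cong\underset{I}\varprojlim(\Aut X_i)^{op},$$
as claimed. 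I would also note that the transition maps on both sides correspond: a domination $X_j\to X_i$ induces the quotient $G/N_j\to G/N_i$ on one side and the restriction $\Aut X_j\to\Aut X_i$ on the other, so the two inverse systems are isomorphic, not merely have isomorphic limits.

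The main obstacle I anticipate is bookkeeping around base points and the $op$'s: one must be careful that the bijections ``connected Galois objects $\leftrightarrow$ open normal subgroups'' and ``$\Aut(G/N)\cong(G/N)^{op}$'' are set up consistently so that the transition maps in the two inverse systems genuinely match, and that the cofinality hypothesis ``every connected object is dominated by some $X_i$'' really does translate into ``the $N_i$ are cofinal among open subgroups'' — this uses that every open subgroup is a point-stabilizer in a connected $G$-set, which follows from $\mathbf{Top3}$-type reasoning as in the proof of Proposition~\ref{P:topology}. None of this is deep, but it is the part where a sloppy argument would go wrong.
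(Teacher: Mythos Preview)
Your proposal is correct and follows precisely the approach the paper takes: the paper does not give a formal proof, but the paragraph immediately preceding the proposition sketches exactly your argument --- pass to $\GSet$ via Theorem~\ref{T:Galois}, identify connected Galois objects with open normal subgroups, use cofinality to get a basis of such subgroups, apply Proposition~\ref{P:classicalcompletion}, and note $\Aut(G/N)\cong(G/N)^{op}$. You have simply filled in the details the paper leaves implicit.
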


\section{Galois theory of covering stacks}{\label{S:Galois}}

The Galois theory of covering stacks of a topological stack $\X$ is
closely related to the group theory of its fundamental group. But,
these two theories can diverge unless we assume that $\X$ behaves
nicely locally: the mouthful `semilocally 1-connected' condition.
This property, unfortunately, may not be preserved under certain
natural constructions that one makes with topological stacks (say,
base extension, or passing to the coarse moduli space). To avoid
this nuisance we begin by developing a theory of fundamental groups
which is more in tune with Galois theory of covering stacks. We then
explain how it relates to the usual theory of fundamental groups.

\subsection{Review of topological stacks}{\label{SS:topological}}
We recall a few definitions from \cite{homotopy}. We follow the
notational convention of [ibid.] by using calligraphic symbols
$\X$,$\Y$,$\Z$,... for stacks and script symbols $X$,$Y$,$Z$,... for
spaces.

Throughout the paper, a stack means a stack over the site
$\mathbf{Top}$ of topological spaces; here, $\mathbf{Top}$ is
endowed with its standard open-cover topology. A stack $\X$ is
called {\bf topological} if it is equivalent to the quotient stack
of a topological groupoid $[X_1\sst{}X_0]$ whose source maps is a
{\em local Serre fibration} in the sense of \cite{homotopy},
$\S$13.1. An {\em atlas}\footnote{In \oldcite{homotopy} we call this
a chart. Bad terminology!} for a topological stack $\X$ is an
epimorphism $p \: X \to \X$ from a topological space $X$. Given such
an atlas, one finds a groupoid presentation for $\X$ by taking
$X_1=X\times_{\X}X$ and $X_0=X$.

Every {\em topological space} $X$ can be thought of as a topological
stack, namely, the topological stack associated to the trivial
groupoid $[X \sst{} X]$. This gives a fully faithful embedding of
the category of topological spaces and continuous maps to the
category of topological stacks. Also, for every topological group $G$,
every continuous $G$-space gives rise to a
topological stack $[X/G]$.  Among other examples of topological
stacks we mention   {\em orbifolds}, the underlying topological
stacks of  {\em Artin stacks}, {\em complexes-of-groups}, and {\em
leaf stacks} of foliations.

The basic notions of algebraic topology (e.g., homotopy, homotopy
groups, generalized homology/cohomology, fibrations,
mapping spaces, loop spaces, etc.) generalize to topological stacks.

By a {\em point} $x$ in a stack $\X$   we mean   a morphism $x \: *
\to \X$ of stacks. The {\em inertia} group of a point $x$ is the
group of self-transformations of the above map. It is naturally a
topological group when $\X$ is topological, and is denoted by $I_x$.

To a topological stack $\X$ one associates a topological space
$\Xm$, called the {\bf coarse moduli space} of $\X$. There is a
natural morphism $\pi \: \X \to \Xm$ that is universal among
morphisms from $\X$ to topological spaces. The map $\pi$ induces a
natural bijection between the set of 2-isomorphism classes of points
of $\X$ and the set of points of  $\Xm$. When $\X=[X_0/X_1]$ for a
topological groupoid $[X_1\sst{}X_0]$, $\Xm$ is naturally
homeomorphic to the coarse quotient space $X_0/X_1$. In particular,
when $\X=[X/G]$ is the quotient stack of a group action, the coarse
moduli space $\Xm$ is homeomorphic to the coarse quotient space
$X/G$.

\subsection{Review of covering stacks}{\label{SS:covering}}

We review a few basic facts about covering stacks of topological
stacks. More details and proofs can be found in \cite{homotopy},
$\S$18.

 \begin{defn}{\label{D:pc}}
      Let $\X$ be a topological stack. We say that $\X$ is {\bf
      connected} if it has no proper open-closed substack. We say
      $\X$ is {\bf path-connected}, if for every two points $x$ and
      $y$ in $\X$,   there is a path from $x$ to $y$.
 \end{defn}

 \begin{defn}{\label{D:lpc}}
      Let $\X$ be a topological stack. We say that $\X$ is {\bf locally
      connected} (respectively, {\bf locally path-connected}, {\bf
      semilocally 1-connected}),   if there is an atlas $X \to \X$
      such that  $X$ is so.
 \end{defn}

These definitions agree with the usual definitions when $\X$ is a
topological  space. This is because of the following lemma.

 \begin{lem}{\label{L:lpc}}
    Let $f \: Y \to X$ be a continuous map of topological spaces
    that admits local sections. Assume $Y$ is locally connected
    (respectively, locally path-connected, semilocally 1-connected).
    Then so is $X$.
 \end{lem}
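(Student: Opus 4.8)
The plan is to prove each of the three properties (local connectedness, local path-connectedness, semilocal 1-connectedness) separately, exploiting the fact that $f$ admits local sections. Fix a point $x \in X$. Because $f$ admits local sections, there is an open neighborhood $W$ of $x$ and a continuous section $s \: W \to Y$ with $f \circ s = \id_W$; in particular $s$ is a homeomorphism onto its image $s(W) \subseteq Y$, which is a (not necessarily open) subspace of $Y$. The key observation is that the properties in question are inherited by $s(W)$ from $Y$ and then transported to $W$ via the homeomorphism $s$. So the real content is the claim that an arbitrary subspace of a locally (path-)connected space need not be locally (path-)connected — hence one must be a little careful and use that $W$ is open downstairs even though $s(W)$ may not be open upstairs.

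For local connectedness, the argument I would give is: let $U \subseteq X$ be open with $x \in U$; shrink to assume $U \subseteq W$. Then $f^{-1}(U) \subseteq Y$ is open and contains $s(x)$; since $Y$ is locally connected, the connected component $V$ of $s(x)$ in $f^{-1}(U)$ is open in $Y$. Now push forward: $f(V)$ is a subset of $U$ containing $x$. To see $f(V)$ is open, note $f(V) \supseteq s^{-1}(V \cap s(W))$ is not obviously enough, so instead I would argue that $f|_{f^{-1}(U)}$ is an open map — here is where having local sections globally matters: a map admitting local sections is open — hence $f(V)$ is open in $U$, hence open in $X$. Finally $f(V)$ is connected as a continuous image of the connected set $V$. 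This exhibits a connected open neighborhood of $x$ inside $U$, proving local connectedness. The path-connected case is verbatim the same, replacing ``connected component'' by ``path-component'': the path-component of $s(x)$ in $f^{-1}(U)$ is open because $Y$ is locally path-connected, its image under the open map $f$ is an open path-connected neighborhood of $x$ contained in $U$.

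For semilocal 1-connectedness, I would proceed as follows: by hypothesis there is an open neighborhood $W$ of $x$ such that $\pi_1(W', y) \to \pi_1(Y, y)$ is trivial for some open $W'$ — more precisely, pick $W'$ open in $Y$ containing $s(x)$ with $\pi_1(W') \to \pi_1(Y)$ trivial. The set $f(W') \cap W_0$, where $W_0$ is a neighborhood over which a section $s$ is defined, is an open neighborhood of $x$; I would show that any loop $\gamma$ in this neighborhood based at $x$ becomes nullhomotopic in $X$. Lift $\gamma$ via $s$ to a loop $s \circ \gamma$ in $s(W_0) \subseteq W'$; it is nullhomotopic in $Y$ by choice of $W'$; apply $f$ to the nullhomotopy to get a nullhomotopy of $f \circ s \circ \gamma = \gamma$ in $X$. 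One must just check base-point bookkeeping and that the loop genuinely lands in a single section chart, which one arranges by shrinking.

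The main obstacle, and the point I would be most careful about, is the openness of $f$: the statement ``admits local sections'' must be used to conclude $f$ is an open map (each point of $Y$ near the image has a section-image neighborhood, and a short diagram chase shows images of opens are opens), and this openness is what converts the good neighborhoods constructed upstairs into genuinely open neighborhoods downstairs. Without it, the images $f(V)$ would only be arbitrary subsets of $X$ and the local property would fail to transfer. The second subtlety is ensuring all relevant neighborhoods can be taken inside a common section chart, which is a routine shrinking argument but should be stated. Everything else is formal.
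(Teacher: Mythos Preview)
Your argument has a genuine gap: the claim that a map admitting local sections is open is false. Take $Y = \{*\} \sqcup \mathbb{R}$ and $X = \mathbb{R}$, with $f$ sending the isolated point $*$ to $0$ and acting as the identity on the copy of $\mathbb{R}$. The inclusion of $\mathbb{R}$ is a global section, so $f$ certainly admits local sections; but $\{*\}$ is open in $Y$ while $f(\{*\}) = \{0\}$ is not open in $X$. Consequently you cannot conclude that $f(V)$ is open, and the local (path-)connectedness argument as written does not go through. The same issue appears in the semilocally $1$-connected case: $f(W') \cap W_0$ need not be open, and even if it were, a loop there lifted via $s$ lands only in $s(W_0)$, which has no reason to sit inside $W'$.

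The repair is precisely the route you dismissed. For local (path-)connectedness: with $V$ the (path-)component of $s(x)$ in $f^{-1}(U)$, the set $s^{-1}(V)$ is open in $X$, contains $x$, and satisfies $s^{-1}(V) \subseteq f(V)$, hence lies inside the (path-)component of $x$ in $U$. That component is therefore a neighborhood of each of its points, hence open. Equivalently, observe that $f$ is a quotient map --- this \emph{does} follow from having local sections --- and quotients of locally (path-)connected spaces are locally (path-)connected. For semilocal $1$-connectedness, take the neighborhood of $x$ to be $s^{-1}(W')$ rather than $f(W') \cap W_0$: this is genuinely open, a loop $\gamma$ in it lifts via $s$ to a loop in $W'$, which is nullhomotopic in $Y$, and applying $f$ contracts $\gamma$ in $X$. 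The paper states the lemma without proof, so there is nothing further to compare against.
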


 \begin{defn}{\label{D:covering}}
    A representable map $\Y \to \X$ of  topological stacks is called
    a {\bf covering map} if  for every topological space $W$ and
    every map $W \to \X$, the base extension
    $W\x_{\X}\Y \to W$ is a covering map of topological spaces.
 \end{defn}

 \begin{prop}{\label{P:diagonal}}
    Let $f \: \Y \to \X$ be a covering map of topological stacks.
    Then, the diagonal map $\De \: \Y \to \Y\x_{\X} \Y$ is an
    open-closed embedding.
 \end{prop}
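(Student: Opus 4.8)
The plan is to exhibit $\De$ as a section of a covering map and then reduce to the elementary fact that a continuous section of a covering map of topological \emph{spaces} is an open-closed embedding.

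First I would note that, just like representable morphisms, covering maps of topological stacks are stable under arbitrary base change: this is immediate from Definition \ref{D:covering}, since a space mapping to the base of a pulled-back family also maps to the original base. Consequently the first projection $\pr_1 \: \Y \x_{\X} \Y \to \Y$, being the base change of the covering map $f$ along $f$ itself, is a covering map; and since $\pr_1 \circ \De = \id_{\Y}$, the diagonal $\De$ is a continuous section of $\pr_1$. It therefore suffices to prove the general statement: if $g \: \mathcal{E} \to \mathcal{B}$ is a covering map of topological stacks and $s \: \mathcal{B} \to \mathcal{E}$ is a section of $g$, then $s$ is an open-closed embedding.

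To prove this, fix an atlas $B \to \mathcal{B}$. As $g$ is representable, $E := \mathcal{E} \x_{\mathcal{B}} B$ is a topological space; by Definition \ref{D:covering} the projection $q \: E \to B$ is a covering map of spaces; and the other projection $E \to \mathcal{E}$ is again an atlas, being a base change of the atlas $B \to \mathcal{B}$. A short diagram chase then identifies the base change $\mathcal{B} \x_{\mathcal{E}} E$ of $s$ along this atlas with $B$, and identifies the resulting map $B \to E$ with the section $\bar{s} \: B \to E$ of $q$ induced by $s$. Now I invoke the classical fact that a continuous section of a covering map of topological spaces is an open-closed embedding: over a trivializing open $U \subseteq B$, writing $q^{-1}(U) \cong U \x F$ with $F$ discrete, the section takes the form $u \mapsto (u, \sigma(u))$ for a continuous $\sigma \: U \to F$, so its image meets $q^{-1}(U)$ in $\bigsqcup_{c \in F} \sigma^{-1}(c) \x \{c\}$, which is open-closed there; patching over a trivializing cover shows $\bar{s}$ is a homeomorphism onto an open-closed subspace. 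Finally, since a representable morphism of stacks is an open-closed embedding precisely when its base change along some (equivalently any) atlas of the target is an open-closed embedding of spaces, we conclude that $s$, and hence $\De$, is an open-closed embedding.

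The only step that needs genuine care is the diagram chase identifying the pullback of the section $s$ along the atlas $E \to \mathcal{E}$ with $\bar{s}$; everything else is either the definition of a covering map, a standard stability or descent property (as developed in \cite{homotopy}, $\S$18), or the elementary statement about sections of covering spaces. I do not anticipate any serious obstacle.
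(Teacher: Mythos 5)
The paper never proves this proposition: it appears in the review subsection $\S$\ref{SS:covering}, whose contents are quoted from \cite{homotopy}, $\S$18, so there is no in-paper argument to measure yours against. Judged on its own, your proof is correct and is essentially the standard argument. The reduction is clean: stability of covering maps under arbitrary base change is immediate from Definition \ref{D:covering}, so $\pr_1 \: \Y\x_{\X}\Y \to \Y$ is a covering map and $\De$ is a section of it; pulling back along an atlas $B \to \Y$ identifies the pullback of the section with the induced section $\bar{s} \: B \to E$ of the covering map of spaces $E=(\Y\x_{\X}\Y)\x_{\Y}B \to B$ (your fiber-product computation using $g\circ s\cong \id$ is right); and your discrete-fibre argument --- the image meets each trivialized piece $U\x F$ in $\bigsqcup_{c\in F}\sigma^{-1}(c)\x\{c\}$, and both openness and closedness are local on an open cover of $E$ --- is complete and needs no separation hypotheses. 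The one place where you genuinely lean on the foundations is the transfer back to stacks: you must know that the section is representable (your computation gives this only along the chosen atlas) and that being an open-closed embedding may be checked after base change along an atlas. Both follow from the standard facts that an epimorphism from a space admits local sections after base change to any space, together with gluing; these are established in \cite{homotopy}, though in the earlier sections on representable morphisms and properties local on the target rather than in $\S$18. Since you flag exactly this step as the one requiring care, I consider the argument complete at the level of detail appropriate for a statement the paper itself only cites.
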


An immediate corollary of this proposition is the following.

 \begin{cor}{\label{C:openclosed}}
    Let $f \: \Y \to \X$ be a covering map of topological stacks.
    Let $p\: X \to \X$ be an atlas for $\X$, and let $q \: Y \to \Y$
    be the pull-back atlas for $\Y$, where $Y=\Y\x_{\X}X$. (Note that
    we can also view $Y$ as an atlas for $\X$ via $f\circ q\: Y \to
    \X$.) Set $R=Y\x_{\Y}Y$ and $R'=Y\x_{\X}Y$, and consider the
    corresponding groupoids $[R\sst{}Y]$ and $[R'\sst{}Y]$ (so
    $[Y/R]\cong\Y$ and $[Y/R']\cong\X$). Then, $[R\sst{}Y]$ is an
    open-closed subgroupoid of $[R'\sst{}Y]$.
 \end{cor}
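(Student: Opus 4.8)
The plan is to exhibit the inclusion of arrow spaces $R\hookrightarrow R'$ as a base change of the diagonal $\De\:\Y\to\Y\x_{\X}\Y$, and then quote Proposition \ref{P:diagonal}. The atlas map $q\:Y\to\Y$ induces a morphism $q\x_{\X}q\:R'=Y\x_{\X}Y\to\Y\x_{\X}\Y$, and there is a natural morphism $j\:R=Y\x_{\Y}Y\to Y\x_{\X}Y=R'$, coming from the fact that a $2$-isomorphism between two points of $\Y$ is in particular a $2$-isomorphism between their images in $\X$ (equivalently, $j$ is the map on fibre products induced by $f\circ q=f\circ q$ over $\X$ versus over $\Y$). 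First I would verify that the square
\[
\xymatrix@R=14pt@C=24pt{ R \ar[r]^{j} \ar[d] & R' \ar[d]^{q\x_{\X}q} \\ \Y \ar[r]_{\De} & \Y\x_{\X}\Y }
\]
is $2$-cartesian, where the left vertical arrow is $q\circ\pr_1$ (equivalently $q\circ\pr_2$, these being canonically $2$-isomorphic by the definition of $R$). This is the standard identity $Y\x_{\Y}Y\cong(Y\x_{\X}Y)\x_{\Y\x_{\X}\Y}\Y$ for iterated fibre products; concretely, an object of the $2$-fibre product $R'\x_{\Y\x_{\X}\Y}\Y$ over a test space $T$ unwinds to a pair of maps $T\to Y$ together with a $2$-isomorphism of their composites with $q$, i.e.\ precisely an object of $R(T)$, and under this identification $j$ is the projection to the first factor. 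I expect this bookkeeping of the various $2$-isomorphisms inside the $2$-fibre products — and checking that the comparison equivalence is compatible with all the projection maps — to be the only genuinely fiddly point of the argument.

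Granting the cartesian square, the rest is immediate. By Proposition \ref{P:diagonal}, $\De\:\Y\to\Y\x_{\X}\Y$ is an open-closed embedding. Since the property of a representable morphism being an open-closed embedding is defined by base change to topological spaces (just as in Definition \ref{D:covering}), it is stable under arbitrary base change; in particular $j\:R\to R'$ is an open-closed embedding. In particular $j$ is a monomorphism, so $R$ is genuinely a subobject of $R'$ over $Y\x Y$.

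It remains to see that $[R\sst{}Y]$ is a \emph{subgroupoid} of $[R'\sst{}Y]$, which is a formal consequence of the universal properties of the fibre products: the source and target maps of $[R'\sst{}Y]$ restrict along $j$ to those of $[R\sst{}Y]$; the identity section $Y\to R'$ (a point equipped with its identity $2$-automorphism) factors through $R$; and the inversion map and the composition map $R'\x_{Y}R'\to R'$ restrict to the corresponding structure maps of $[R\sst{}Y]$. Combined with the previous paragraph, this shows that $[R\sst{}Y]$ is an open-closed subgroupoid of $[R'\sst{}Y]$, completing the proof.
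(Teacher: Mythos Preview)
Your proposal is correct and is exactly the argument the paper has in mind: the paper does not give a proof at all, merely presenting the statement as ``an immediate corollary'' of Proposition~\ref{P:diagonal}, and the reason it is immediate is precisely the base-change square you wrote down. You have simply spelled out the details the paper leaves implicit.
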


\subsection{$\mathsf{C}$-complete fundamental groups}{\label{SS:Fundamental}}

In this subsection we look at the topological incarnations of the
notions developed in the previous section. Let $\X$ be a connected
locally path-connected topological stack, and let $x$ be a point in
$\X$. We will not assume yet that $\X$ is semilocally 1-connected.

To $(\X,x)$ we can associate various Galois categories
$(\mathsf{C},F)$ of covering stacks  by requiring that

  \begin{itemize}
      \item[$\mathbf{C1.}$]  If $\Y$ is in $\mathsf{C}$ and $\Y'$
        is another covering stack of $\X$ that is dominated by $\Y$
        (i.e., there is a surjection $\Y \to \Y'$ relative to $\X$),
        then $\Y'$ is in  $\mathsf{C}$.

      \item[$\mathbf{C2.}$] $\mathsf{C}$ is closed under fiber products.

      \item[$\mathbf{C3.}$] $\mathsf{C}$ is closed under  taking
        disjoint unions.

      \item[$\mathbf{C4.}$] If $\Y$ is in $\mathsf{C}$ , then every
        connected component of $\Y$ is also in $\mathsf{C}$.
  \end{itemize}

\noindent The fundamental functor $F$ for such a category is simply
the fiber functor $\Y \mapsto \Y_x=*\x_x \Y$. Axioms
$\mathbf{G1}\mbox{-}\mathbf{G6}$ are easy to verify. (Perhaps
$\mathbf{G2}$ is a bit non-trivial. It follows from Lemma
\ref{L:G2} below.)

For a given (connected) covering stack $f \: \Y \to \X$, and for a
choice of a base-point $y$ in $\Y$ lying above $x$ (i.e., $y$ is a
point in the fiber $\Y_x$ of $\Y$ over $x$), we have an injection of
fundamental groups $\pY \to \pX$. The image of this injection
uniquely determines $(\Y,y)$ up to isomorphism.
Changing the base point $y$ will change this subgroup by conjugation.

Let $\HH_{\mathsf{C}}$ be the collection of all such subgroups
of $\pX$. It is easily seen that the axioms  $\mathbf{Top1,2,3}$ of
$\S$\ref{SS:pro} are satisfied:  $\mathbf{Top1}$ follows from
$\mathbf{C2}$; $\mathbf{Top2}$ follows from the discussion of the
previous paragraph about changing the base point; $\mathbf{Top3}$
follows from $\mathbf{C1}$ and Corollary \ref{C:Top3} below.

 \begin{lem}{\label{L:G2}}
   Let $(\X,x)$ be a pointed topological stack, and let $f \: \Y
   \to \X$ be an arbitrary covering stack of $\X$.

    \begin{itemize}

      \item[$\mathbf{i.}$] Let $\Z$ be a covering stack of $\X$, and
       let $\Z \to \Y\x_{\X}\Y$ be an equivalence relation on $\Y$
       (see Axiom $\mathbf{G2}$). Then, the quotient $\Y'$ of this
       equivalence relation exists and is a covering stack of $\X$.

      \item[$\mathbf{ii.}$] Let $F'$ be a $\pX$-set, and let
       $F_x(\Y) \to F'$ be a surjective $\pX$-equivariant map, where
       $F_x(\Y)$ is the fiber of $f$ over $x$. Then, there exists a
       unique covering stack $\Y' \to \X$ of $\X$ whose fiber is
       isomorphic to $F'$ (as a $\pX$-set), together with a map   of
       covering stacks $\Y \to \Y'$ realizing  $F_x(\Y) \to F'$.

   \end{itemize}

  \end{lem}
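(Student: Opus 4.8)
The plan is to reduce both statements to the elementary theory of covering spaces by fixing an atlas and working with a groupoid presentation of $\X$. I would begin by choosing a locally path-connected atlas $p\colon X\to\X$ and setting $R:=X\x_{\X}X$, so that the translation groupoid $[R\sst{}X]$ presents $\X$ and its source map is a local Serre fibration. Recall (all of this is in \cite{homotopy}, $\S$18) that for any covering stack $\mathcal{W}\to\X$ the pullback $W:=X\x_{\X}\mathcal{W}\to X$ is an ordinary covering space carrying an action of $[R\sst{}X]$, that $\mathcal{W}$ is recovered as the quotient stack $[W/R]$, and that a morphism of covering stacks over $\X$ is the same thing as an $R$-equivariant map of the associated covering spaces over $X$; moreover any such map is itself a covering map. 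Since $p$ is locally path-connected, all the covering spaces that occur are locally path-connected, hence disjoint unions of their connected components, each again a covering space; on fibers this records the orbit decomposition of the corresponding fundamental-group set. I would use this dictionary throughout.

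For part $\mathbf{i}$, I would pull the equivalence relation $\Z\to\Y\x_{\X}\Y$ back along $p$ to get $Z\to Y\x_X Y$, with $Y:=X\x_{\X}\Y$ and $Z:=X\x_{\X}\Z$. This is a monomorphism of covering spaces over $X$, hence (by the argument of Proposition \ref{P:diagonal}) an open-closed embedding onto a union of connected components of $Y\x_X Y$; thus $Z$ is an $R$-invariant, open-closed equivalence relation on $Y$. I would then form the topological quotient $Y':=Y/Z$, with quotient map $q\colon Y\to Y'$, and check that $Y'\to X$ is again a covering space: over a connected open $U\subseteq X$ that evenly covers $Y$ and over which $Y\x_X Y$ and $Z$ are trivialized, the trace of $Z$ on the fiber is a locally constant, hence (as $U$ is connected) constant, equivalence relation, so $Y'|_U$ is a trivial covering of $U$. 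Since $Z$ is $R$-invariant, the $R$-action descends along $q$; I would set $\Y':=[Y'/R]$, a topological stack presented by the translation groupoid of this action, and verify from the construction that the map $\Y\to\Y'$ induced by $q$ is a covering map which coequalizes the pair $\Z\sst{}\Y$ and is universal with this property. Hence $\Y'$ is the required quotient and is a covering stack of $\X$.

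For part $\mathbf{ii}$, I would put $E:=F_x(\Y)\x_{F'}F_x(\Y)$, an equivalence relation on $F_x(\Y)$ with $F_x(\Y)/E\cong F'$, and regard it as a $\pX$-stable subset of $F_x(\Y\x_{\X}\Y)=F_x(\Y)\x F_x(\Y)$. Since $\Y\x_{\X}\Y$ is a covering stack (the composite of two covering maps), it is the disjoint union of its connected components, whose fibers are precisely the $\pX$-orbits of $F_x(\Y\x_{\X}\Y)$; as $E$ is a union of such orbits, there is a unique open-closed substack $\Z\subseteq\Y\x_{\X}\Y$ with $F_x(\Z)=E$, namely the union of the corresponding components. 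Because open-closed substacks of $\Y\x_{\X}\Y$ are detected by their fibers over $x$, the reflexivity, symmetry and transitivity of $E$ imply that $\Z$ is an equivalence relation on $\Y$, and part $\mathbf{i}$ then produces $\Y':=\Y/\Z$. Its fiber over $x$ is $F_x(\Y)/E\cong F'$ as a $\pX$-set, the quotient map $\Y\to\Y'$ realizes $F_x(\Y)\to F'$, and uniqueness of the pair $(\Y',\,\Y\to\Y')$ follows from the fact, recalled in $\S$\ref{SS:Fundamental}, that a covering stack, and a morphism into it, is determined up to isomorphism by the corresponding datum of $\pX$-sets.

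The step I expect to be the real obstacle is the verification in part $\mathbf{i}$ that the naive set-theoretic quotient $Y'=Y/Z$ is an honest covering space of $X$ --- the local-triviality argument --- together with the routine but slightly tedious bookkeeping needed to descend the $[R\sst{}X]$-action to $Y'$, to see that $[Y'/R]$ is a topological stack, and to check that $\Y\to[Y'/R]$ is a covering map with the asserted universal property. This is exactly where local path-connectedness of $\X$ and the open-closed (equivalently, locally constant on fibers) nature of the equivalence relation get used; everything else is formal manipulation inside the dictionary between covering stacks of $\X$ and $R$-equivariant covering spaces of $X$.
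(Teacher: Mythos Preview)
Your proof of part~($\mathbf{i}$) is essentially the paper's: both pull back to an atlas, form the topological quotient $Y'=Y/Z$, and present $\Y'$ via the resulting groupoid. Your phrasing in terms of ``descending the $R$-action'' is equivalent to the paper's explicit construction of $R_{Y'}:=R_Y/R_Z$; you are somewhat more careful than the paper in spelling out why $Y'\to X$ is locally trivial.

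For part~($\mathbf{ii}$), however, you take a genuinely different route. The paper does \emph{not} reduce ($\mathbf{ii}$) to ($\mathbf{i}$); instead it invokes the classical statement for honest spaces twice---once for the covering $Y\to X$ and once for $R_Y\to R_X$---to produce spaces $Y'$ and $R_{Y'}$ directly, then checks that $[R_{Y'}\sst{}Y']$ is a groupoid and sets $\Y':=[Y'/R_{Y'}]$. Your argument, by contrast, stays at the stack level: you realize the kernel-pair $E=F_x(\Y)\x_{F'}F_x(\Y)$ as the fiber of an open-closed substack $\Z\subseteq\Y\x_{\X}\Y$ (using that components of a locally path-connected covering stack correspond to $\pX$-orbits in the fiber), verify that $\Z$ is an equivalence relation because $E$ is, and then apply part~($\mathbf{i}$). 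Your approach is cleaner in that it makes ($\mathbf{ii}$) a formal consequence of ($\mathbf{i}$) plus the component--orbit correspondence, and it avoids having to assemble two separate space-level constructions into a groupoid by hand; the paper's approach has the minor advantage of not needing to argue that an arbitrary union of components of $\Y\x_{\X}\Y$ is again a covering stack, since it never forms $\Z$ as a stack at all.
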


  \begin{proof}[Proof of {\em ($\mathbf{i}$)}]
      Choose an atlas  $X \to \X$ for $\X$. Let $Y \to \Y$ and $Z
      \to \Z$ be the pull-back atlases for $\Y$ and $\Z$. Let
      $[R_X \sst{} X]$, $[R_Y \sst{} Y]$ and $[R_Z \sst{} Z]$ be the
      corresponding groupoids. Then, $Z \to Y\x_{X} Y$ and $R_Z \to
      R_Y\x_{R_X} R_Y$ are equivalence relations. Also, note that
      the maps $Y\to X$, $Z\to X$, $R_Y \to R_X$ and $R_Z\to R_X$
      are all covering maps (each being the base extension of either
      $\Y \to \X$ or $\Z \to \X$). Set $Y':=Y/Z$ and
      $R_{Y'}:=R_Y/R_Z$. It is easy to see that we have a natural
      groupoid structure on $[R_{Y'} \sst{} Y']$. The quotient stack
      $\Y':=[Y'/R_{Y'}]$ is the desired quotient of $\Y$ by $\Z$.

  \vspace{0.1in}
  \noindent {\em Proof of}  ($\mathbf{ii}$).
      Note that the statement is true for topological spaces.  So,
      as in the previous part, by choosing an atlas  $X \to \X$
      we will reduce the problem to the case of topological
      spaces.  Let $Y$, $R_X$ and $R_Y$ be as in the previous part.
      Let $x_0\in X$ be a lift of $x$, and let $x_1 \in R_X$ be the
      corresponding point in $R_X$. Note that the maps $Y \to X$ and
      $R_Y \to R_X$ are base extensions of $f$, so both are covering
      maps. Furthermore, the fibers $F_{x_0}(Y)$ and $F_{x_1}(R_Y)$
      are, as sets, in natural bijection  with $F_x(\Y)$. The
      actions of $\pi_1(X,x_0)$ and $\pi_1(R_X,x_1)$ on these sets
      are obtained from that of $\pX$ on $F_x(\Y)$ via the group
      homomorphisms $\pi_1(X,x_0) \to \pX$ and $\pi_1(R_X,x_1) \to
      \pX$, respectively. We are now reduced to the case of
      topological spaces, with $\X$ replaced by $X$ and $R_X$,
      respectively. So, we can construct a covering space $Y'$
      of $X$ and a covering space  and $R_{Y'}$ of  $R_X$, together
      with maps $Y \to Y'$ and $R_Y\to R_{Y'}$.
      It is easy to see that there is a natural  groupoid structure
      on $[R_{Y'} \sst{} Y']$. The quotient stack $\Y':=[Y'/R_{Y'}]$
      is the desired covering stack
      of $\X$.
  \end{proof}

  \begin{cor}{\label{C:Top3}}
      Let $(\X,x)$ be a connected pointed topological stack. Let
    $f \: (\Y,y) \to (\X,x)$ be a connected covering stack, and let $H
    \subseteq \pX$ be the corresponding subgroup. Let $H' \subseteq
    \pX$ be a subgroup containing $H$. Then, there exists a
    (pointed) covering stack $\Y'$ of $\X$ corresponding to $H'$.
  \end{cor}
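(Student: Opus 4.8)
The plan is to reduce the corollary directly to part $(\mathbf{ii})$ of Lemma \ref{L:G2}. The key observation is that subgroups of $\pX$ containing a fixed subgroup $H$ correspond, on the level of fiber functors, to $\pX$-equivariant quotients of the coset space $\pX/H$. So first I would let $f\:(\Y,y)\to(\X,x)$ be the given connected covering stack with $H=\mathrm{im}\big(\pY\to\pX\big)$; by the remarks preceding Lemma \ref{L:G2}, the fiber $F_x(\Y)$ is isomorphic, as a $\pX$-set, to $\pX/H$, with $y$ corresponding to the trivial coset $eH$. Given $H'\supseteq H$, consider the canonical $\pX$-equivariant surjection $q\:\pX/H \twoheadrightarrow \pX/H'$, and transport it through the isomorphism $F_x(\Y)\cong\pX/H$ to obtain a surjective $\pX$-equivariant map $F_x(\Y)\to F'$, where $F':=\pX/H'$.

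Next I would apply Lemma \ref{L:G2}$(\mathbf{ii})$ to this data: it yields a covering stack $\Y'\to\X$ together with a map of covering stacks $\Y\to\Y'$ whose effect on fibers over $x$ realizes $q$, and such that $F_x(\Y')\cong F'=\pX/H'$ as $\pX$-sets. I would then take $y'\in\Y'$ to be the point in the fiber $\Y'_x$ corresponding to the coset $eH'\in\pX/H'$ (equivalently, the image of $y$ under $\Y\to\Y'$). Since $\pX/H'$ is a transitive $\pX$-set, $\Y'$ is connected, and by the correspondence recalled before Lemma \ref{L:G2} the subgroup of $\pX$ associated to the pointed connected covering $(\Y',y')$ is precisely the stabilizer of $eH'$, namely $H'$. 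This is the desired covering stack.

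The only point requiring a little care is the identification, recalled in the paragraph preceding Lemma \ref{L:G2}, between pointed connected covering stacks $(\Y,y)$ and subgroups of $\pX$ via $y\mapsto\mathrm{im}(\pY\to\pX)$, together with the compatibility of this dictionary with the fiber functor $F_x$ (so that $F_x(\Y)\cong\pX/H$ with basepoint at $eH$). Granting that dictionary — which is part of the Galois-categorical setup already in place — the argument is essentially formal; the substantive content has been entirely absorbed into Lemma \ref{L:G2}$(\mathbf{ii})$, which is where the actual construction of the covering stack (by descent from an atlas, using the classical theory of covering spaces) takes place. Thus I do not expect any genuine obstacle here beyond bookkeeping with the $\pX$-action on fibers.
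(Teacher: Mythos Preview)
Your proposal is correct and follows exactly the approach the paper intends: the corollary is stated immediately after Lemma~\ref{L:G2} with no separate proof, precisely because it is the formal consequence of part~($\mathbf{ii}$) that you have written out. Your identification of $F_x(\Y)$ with $\pX/H$ and the use of the equivariant quotient $\pX/H\twoheadrightarrow\pX/H'$ as the input to Lemma~\ref{L:G2}($\mathbf{ii}$) is the expected unpacking.
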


We summarize our discussion by saying that   $\HH_{\mathsf{C}}$ is
exactly the set of open subgroups of a prodiscrete topology
on $\pX$.  The category $\mathsf{C}$ is  equivalent to the category
of continuous $\pX$-sets. If we denote by ${\pX}_{\mathsf{C}}$  the
completion of $\pX$ (Definition \ref{D:completion}) with respect to
this topology, we have proved the following

  \begin{prop}
    The notation being as above, there is  an equivalence of
    Galois categories $(\mathsf{C},F) \cong$
    ${\pX}_{\mathsf{C}}\mbox{-}\mathsf{Set}$.
  \end{prop}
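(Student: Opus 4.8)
The plan is to assemble the proposition from the pieces already constructed in the preceding discussion, so that essentially no new work is needed beyond organizing what has been shown. The statement has two halves: first, that $(\mathsf{C},F)$ is a Galois category, and second, that it is equivalent (as a Galois category) to ${\pX}_{\mathsf{C}}\mbox{-}\mathsf{Set}$. For the first half I would note that axioms $\mathbf{G1}$ (finite projective limits: final object is $\X$ itself, fiber products of covering stacks over $\X$ are again covering stacks by $\mathbf{C2}$), $\mathbf{G3}$, $\mathbf{G4}$, $\mathbf{G5}$, $\mathbf{G6}$ are all routine for covering stacks with the fiber functor $F=\Y\mapsto\Y_x$, exactly as indicated in the text just before Lemma \ref{L:G2}. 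The one nonroutine axiom is $\mathbf{G2}$ — existence of arbitrary direct sums, of quotients by equivalence relations, and of quotients by faithful group actions — and this is supplied precisely by Lemma \ref{L:G2}: part (i) gives quotients by equivalence relations (hence, by specializing to the relation defined by a group action, quotients by group actions), while closure under disjoint unions is $\mathbf{C3}$ together with part (ii) to handle the fiber functor compatibly. Axiom $\mathbf{C4}$ guarantees that connected components stay in $\mathsf{C}$, which is what makes $F$ behave correctly on direct sums.

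For the second half I would invoke Grothendieck's main theorem, Theorem \ref{T:Galois}, which gives a natural equivalence $\mathsf{C}\cong\pi'_1(\mathsf{C},F)\mbox{-}\mathsf{Set}$ where $\pi'_1(\mathsf{C},F)=\Aut F$. The content then reduces to identifying $\Aut F$ with ${\pX}_{\mathsf{C}}$, the completion of $\pX$ with respect to the $\HH_{\mathsf{C}}$-topology. Here I would use the classical covering-space dictionary recalled in the text: for a connected covering stack $(\Y,y)$ over $(\X,x)$ we get an injection $\pY\hookrightarrow\pX$ whose image lies in $\HH_{\mathsf{C}}$, and this image determines $(\Y,y)$ up to isomorphism; conversely every $H\in\HH_{\mathsf{C}}$ arises this way (Corollary \ref{C:Top3} handles the upward-closure direction). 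This means the fiber functor $F$ factors through the forgetful functor $\pX\mbox{-}\mathsf{Set}$ — more precisely, $\mathsf{C}$ is equivalent to the full subcategory of $\pX$-sets whose point-stabilizers lie in $\HH_{\mathsf{C}}$, which by definition of the $\HH_{\mathsf{C}}$-topology is exactly the category of continuous $\pX$-sets for that topology, i.e.\ $\pX\mbox{-}\mathsf{Set}$ in the notation of $\S$\ref{SS:pro}. Since this equivalence respects the forgetful functors, Definition \ref{D:completion} identifies $\Aut F$ with the prodiscrete completion $({\pX})^{\wedge}_{\HH_{\mathsf{C}}}={\pX}_{\mathsf{C}}$, and Lemma \ref{L:1} (equivalence $\iota^*_G$ respecting forgetful functors) makes the whole chain of equivalences compatible.

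Putting it together: $\mathsf{C}\cong\pX\mbox{-}\mathsf{Set}\cong {\pX}_{\mathsf{C}}\mbox{-}\mathsf{Set}$, and the composite is a Galois functor because each step respects fiber functors; this is the desired equivalence of Galois categories.

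The step I expect to be the main obstacle is the verification that every object of $\mathsf{C}$ corresponds to a genuine continuous $\pX$-set and that, conversely, every continuous $\pX$-set (for the $\HH_{\mathsf{C}}$-topology) is realized by an object of $\mathsf{C}$ — in other words, pinning down the equivalence $\mathsf{C}\cong\pX\mbox{-}\mathsf{Set}$ on the nose rather than just abstractly via $\Aut F$. The covering-theoretic input from \cite{homotopy}, $\S$18 gives the dictionary for connected covering stacks, and Lemma \ref{L:G2}(ii) upgrades this to the non-connected case by building covering stacks from quotient $\pX$-sets; the axioms $\mathbf{C1}$–$\mathbf{C4}$ are exactly what is needed for this realization to stay inside $\mathsf{C}$. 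Once that identification is in hand, the passage to completions is purely formal, handled entirely by the results of $\S$\ref{SS:pro}.
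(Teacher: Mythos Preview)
Your proposal is correct and matches the paper's approach: the proposition is stated in the paper without a separate proof, as a summary of the preceding discussion, and what you have written is precisely a careful unpacking of that discussion --- verifying the Galois axioms via $\mathbf{C1}$--$\mathbf{C4}$ and Lemma~\ref{L:G2}, identifying $\mathsf{C}$ with continuous $\pX$-sets via the covering dictionary and Corollary~\ref{C:Top3}, and then passing to the completion via Lemma~\ref{L:1}. The only minor remark is that the paper emphasizes the direct route $\mathsf{C}\cong\pX\text{-}\mathsf{Set}\cong{\pX}_{\mathsf{C}}\text{-}\mathsf{Set}$ rather than the detour through Theorem~\ref{T:Galois} and $\Aut F$, but you present both and they are of course equivalent.
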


We call ${\pX}_{\mathsf{C}}$ the $\mathsf{C}$-{\bf complete
fundamental group} of $(\X,x)$.

\section{Main examples of Galois
categories}{\label{S:GaloisExamples}}

We list the main examples of Galois categories that we are
interested in. More examples can be produced by noting that, if
$\mathsf{C}$ and $\mathsf{C}'$ satisfy the above axioms, then so
does their ``intersection'' $\mathsf{C}\cap\mathsf{C}'$. Here, by
$\mathsf{C}\cap\mathsf{C}'$ we mean the category of all covering
stacks $\Y \to \X$ that are isomorphic to a covering stack in
$\mathsf{C}$ and a covering stack in $\mathsf{C}'$.

\subsection{The Galois category $\mathsf{Full}$}{\label{SS:Galois1}}

The category $\mathsf{Full}$ of all covering stacks of $\X$
satisfies the required axioms. This gives the finest
$\mathsf{C}$-topology on $\pX$, which we call the {\em full}
topology. The corresponding prodiscrete fundamental group
${\pX}_{\mathsf{Full}}$ is denoted by $\pi_1'(\X,x)$. There is a
natural homomorphism $\pX \to \pi_1'(\X,x)$. This map is not in
general an isomorphism. (This is due to the fact that the universal
cover may not exist; see Example \ref{E:Hawaiian} below). This
homomorphism is, however,  an isomorphism if $\X$ is connected,
locally path-connected, and semilocally
1-connected, because in this case the universal cover exists
(\cite{homotopy}, $\S$18) and the full topology is discrete.

\begin{ex}[Hawaiian ear-ring]{\label{E:Hawaiian}}
  Let $X$ be the subspace of $\mathbb{R}^2$ defined as the union of
  circles of radius $2^{-n}$, $n\in \bbZ$, centered at the points
  $(2^{-n},0)$. This space is not semilocally 1-connected because
  every open set containing the origin contains all but finitely
  many of the circles.

  Let $Y=\bigvee_{i\in \bbZ} S^1$ be the wedge sum of countably many
  circles. This is a semilocally 1-connected space. We have
  $\pi_1(Y)\cong \mathbb{F}^{\bbZ}$, the free group on countably
  many generators $\{a_i \ |\ i\in \bbZ\}$. Consider the prodiscrete
  topology on $\pi_1(Y)$ generated by the subgroups $H_n$, where
  $H_n$ is the normal subgroup generated by $\{a_i \ |\    i>n \}$.
  Let $\pi_1(Y)^{\wedge}$ be the prodiscrete completion.

  There is a natural continuous bijection $f \: Y \to X$. This map
  induces an isomorphism $\pi_1(Y)^{\wedge} \to \pi'_1(X)$. More
  explicitly,
    $$\pi'_1(X)\cong \liminv_{n \in \bbZ} \mathbb{F}^{\bbZ}/H_n\cong
       \liminv_{n \in \bbN} \mathbb{F}^{\leq n},$$
  where $F^{\leq n}$ is the free group on generators $\{a_i \
  |\   i \leq n  \}$ and $F^{\leq n+1} \to F^{\leq n}$ is the map
  that sends $a_{n+1}$ to $1$.
 \end{ex}

\subsection{The Galois category $\mathsf{Fin}$}{\label{SS:Galois2}}

Another example is the category $\mathsf{Fin}$ of covering stacks of
$\X$ each of whose connected components has finite degree over $\X$.
The corresponding topology on $\pX$ consists of open sets of the
full topology which have finite index in $\pX$. This topology has a
basis of normal subgroups. Therefore, the corresponding
completion can be computed (using Proposition
\ref{P:classicalcompletion}) as

            $${\pX}^{\wedge}_{\mathsf{Fin}} \cong \underset{\N}\liminv
                                       \pX/N,$$
\noindent where $\N$ is the set of all open (in the full topology)
normal subgroups of finite index of $\pX$. When $\X$ is semilocally
1-connected (so full topology is discrete) this coincides with the
profinite completion of  $\pX$.

\subsection{The Galois category $\mathsf{FPR}$}{\label{SS:Galois3}}

  \begin{defn}{\label{D:FPR}}
   Let $f \: \Y \to \X$ be a representable morphism of stacks. Let
   $y \in \Y$ be a point in $\Y$. We say that $f$ is {\bf
   fixed-point-reflecting} (FPR for short) at $y$ if the induced map
   $I_y \to I_{f(y)}$ (which is a priori injective) is an
   isomorphism. We say that $f$ is FPR  if it is FPR at every point.
  \end{defn}

The following lemma will be used in $\S$\ref{SS:FreeFPR}.

  \begin{lem}{\label{L:FPR}}
    Let $f\: \Y \to \X$ be a representable morphism of stacks, and
    let $x \in \X$ be a point. Let $\Y_x$ be the fiber of  $f$ over
    $x$. Assume $\Y_x$ is finite. Let $\bar{x} \in \Xm$   be the
    image of $x$ in $\Xm$, and let $\Y_{mod,x}$ be the fiber of
    $f_{mod} \: \Ym \to \Xm$ over $\bar{x}$. Then $\#(Y_{mod,x})\leq
    \#(\Y_x)$. The equality holds if and only if
    $f$ is FPR at every point in the fiber of $\Y$ over $x$.
  \end{lem}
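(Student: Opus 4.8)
The plan is to reduce the statement to a purely local analysis of the fibers of $f_{mod}$ over the single point $\bar{x}$, and then to identify $Y_{mod,x}$ with the set of orbits of a certain groupoid action on $\Y_x$ whose ``isotropy'' is governed exactly by the maps $I_y \to I_x$. First I would choose an atlas $X \to \X$ with a lift $x_0$ of $x$, and form the pull-back atlas $Y = \Y \x_{\X} X \to \Y$; then $[R_Y \sst{} Y] \to [R_X \sst{} X]$ presents $f$, and since $f$ is representable the map $Y \to X$ is a genuine map of spaces whose fiber over $x_0$ is the (discrete, finite) set $\Y_x$. The point $\bar{x} \in \Xm = X_0/X_1$ corresponds to the $X_1$-orbit of $x_0$, and $\Y_{mod,x}$ is the preimage of that orbit in $Y/R_Y$; equivalently, $\Y_{mod,x}$ is the quotient of $\Y_x$ by the equivalence relation generated by the action of those arrows in $R_X$ that fix $x_0$, i.e. by the inertia group $I_x$ (together with its action on the fiber coming from the $R_Y$-structure).

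The key computation is then the following. The inertia group $I_x$ acts on the fiber $\Y_x$ (this is the standard action of the isotropy of a point on the fiber of a representable map), and $Y_{mod,x}$ is precisely the orbit set $\Y_x / I_x$. Hence $\#(Y_{mod,x}) = \#(\Y_x / I_x) \leq \#(\Y_x)$ automatically, giving the inequality. For the equality clause, $\#(\Y_x/I_x) = \#(\Y_x)$ holds if and only if $I_x$ acts trivially on $\Y_x$, i.e. the stabilizer in $I_x$ of every $y \in \Y_x$ is all of $I_x$. But that stabilizer is exactly the image of $I_y \to I_x$: an element $\gamma \in I_x$ fixes $y$ in the fiber precisely when it lifts to an automorphism of $y$, i.e. lies in the image of the (always injective) map $I_y \to I_x$. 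So triviality of the action on all of $\Y_x$ is equivalent to $I_y \to I_x$ being surjective, hence an isomorphism, for every $y$ in the fiber — which is the assertion that $f$ is FPR at every point of $\Y_x$.

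I would organize the write-up in three steps: (1) atlas reduction, identifying $\Y_{mod,x}$ with a quotient of $\Y_x$; (2) the identification of that quotient as $\Y_x/I_x$ for the natural $I_x$-action, and the identification of point-stabilizers with images of inertia maps; (3) reading off the inequality and the equality condition. The main obstacle is Step (2): making precise, and correct, the claim that the relevant equivalence relation on the fiber $\Y_x$ is exactly the orbit equivalence of the $I_x$-action, and that the stabilizer of $y \in \Y_x$ in $I_x$ coincides with the image of $I_y \to I_x$. This requires unwinding the 2-categorical definition of the fiber product $*\x_x\Y$ and of the induced morphism on coarse moduli — in particular checking that two points $y, y' \in \Y_x$ map to the same point of $\Ym$ if and only if there is a path (in the groupoid presentation, an arrow over $x_0$, i.e. an element of $I_x$) carrying one to the other — and keeping careful track of which maps are a priori injective versus surjective. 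Everything else (the cardinality bookkeeping, the fact that a finite set quotiented by a group has strictly fewer elements unless the action is trivial) is routine.
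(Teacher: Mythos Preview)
Your proposal is correct and is precisely the natural elaboration of what the paper leaves implicit: the paper's own proof is the single word ``Easy.'' Your identification of $\Y_{mod,x}$ with the orbit set $\Y_x/I_x$, together with the observation that the stabilizer of $y\in\Y_x$ under the $I_x$-action is exactly the image of $I_y\to I_x$, is the computation the author had in mind; the atlas reduction in your Step~(1) is optional (one can argue directly with the $2$-categorical fiber $*\times_x\Y$), but it does no harm.
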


  \begin{proof}
   Easy.
  \end{proof}

It is easy to check that the category $\mathsf{FPR}$ of all FPR
covering stacks of $\X$ satisfies the axioms
$\mathbf{C1}\mbox{-}\mathbf{C4}$ of $\S$\ref{SS:Fundamental}.

\subsection{The Galois category $\mathsf{Free}$}{\label{SS:Galois4}}

Let   $\mathsf{Free} \subseteq \mathsf{FPR}$ be the category of all
FPR   covering stacks $\Y \to \X$ such that the induced map $\Ym \to
\Xm$ is again a covering map. Then, $\mathsf{Free}$ satisfies the
axioms. We will only prove this in a special case where  $\X$ is
strongly locally path-connected (Definition \ref{D:stronglylc}), in
which case we will show that  $\mathsf{Free}=\mathsf{FPR}$
(Proposition \ref{P:strongFPR}).

We will also see in Proposition \ref{P:properFPR} that, under some
mild conditions on $\X$, every finite FPR covering stack is
automatically free. That is,
$\mathsf{FPR}\cap \mathsf{Fin}  =\mathsf{Free}\cap \mathsf{Fin}$.

\section{Topological stacks with slice property}{\label{S:Mild}}

In this section, we introduce an important class of topological
stacks which behave particularly well locally. We call these {\em
topological stacks with slice property}. The slice property is the
key in proving the main theorems of the paper.

\vspace{0.1in}
\noindent {\bf Notation.} Let $G$ be a topological group acting on a
space $X$, and let $x$ be a point in $X$. By a slight abuse of
notation, we will denote the stabilizer group of $x$ by $I_x$.
We will  view $I_x$ as a topological group.

 \begin{defn}[\oldcite{Palais}, $\S$2.1]{\label{D:mild}}
   Let $G$ be a topological group acting continuously on a
   topological space $X$, and let $x$ be a point in $X$.
   A subset $S$ of $X$ is called a {\bf slice
   at} $x$ if   it has the following properties:
   \begin{itemize}
    \item[$\mathbf{S1}$.] The subset $I_xS \subseteq X$ is open and
     there exists a $G$-equivariant map $f \: GS \to G/I_x$ whose
     fiber over the point $I_x \in G/I_x$ is precisely $S$.

    \item[$\mathbf{S2}$.] There exists an open subspace $U \subseteq
     G/I_x$ and a local section $\chi \: U \to G$ such that $(u,s)
     \mapsto \chi(u)s$ is a homeomorphism of $U\times S$ onto an
     open neighborhood of $x$ in $X$.
   \end{itemize}

   We say that the action has {\bf  slice property at} $x$ if every
   open neighborhood of $x$ contains a slice at $x$. We say that a
   group action  has slice property if it has slice property at every
   point.
 \end{defn}

 \begin{rem}{\label{R:nearslice}}
 \end{rem}
   \begin{itemize}

    \item[$\mathbf{1}$.] It follows from (\cite{Palais}, Proposition
     2.1.3) that the natural $G$-equivariant map $G\times_{I_x} S \to
     GS$ is a homeomorphism. Here, $G\times_{I_x} S$ is the quotient of
     $G\times S$ under the action of $I_x$ defined by
     $\alpha\cdot(g,x)=(g\alpha^{-1},\alpha x)$.

    \item[$\mathbf{2}$.] In the case where $G$ is a Lie group (not
     necessarily compact), ($\mathbf{S2}$) follows from
     ($\mathbf{S1}$). This is \cite{Palais}, Proposition 2.1.2.

    \item[$\mathbf{3}$.] In the case where $I_x$ is compact, existence
     of a slice at $x$ implies existence of slices that are arbitrarily
     small. Therefore, when the action of $G$ on $X$ has compact
     stabilizers, to check whether the action has slice property it is
     enough to check that there exists at least one slice at every point.
   \end{itemize}

 \begin{lem}{\label{L:nearslice}}
   Let $G$ be a topological group acting on a topological space $X$.
   Let $x$ be a point in $X$ and $S$ a subset containing $x$. Then,
   $S$ is a slice at $x$ if and only if the  map
   of stacks $[S/I_x] \to [X/G]$ is an open embedding.
 \end{lem}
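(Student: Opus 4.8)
The plan is to prove both implications by passing to a groupoid presentation and reducing to the corresponding (known) statement about topological spaces, using the homeomorphism $G\times_{I_x}S\risom GS$ from Remark \ref{R:nearslice}.$\mathbf{1}$ and the characterization of open embeddings of stacks via atlases. First I would unwind what the map $[S/I_x]\to[X/G]$ is: the inclusion $S\hookrightarrow X$ is $I_x$-equivariant (with $I_x$ acting on $X$ by restriction of the $G$-action), so it induces a morphism of quotient stacks, and this is exactly the map in question. An open substack of $[X/G]$ is the same thing as a $G$-invariant open subset of $X$; so the image of $[S/I_x]\to[X/G]$, if the map is an open embedding, must be $[GS/G]$ with $GS$ open in $X$. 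Thus the content of the lemma is: $S$ is a slice at $x$ iff $GS$ is open in $X$ and the induced map $[S/I_x]\to[GS/G]$ is an isomorphism onto its (open) image.

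For the forward direction, assume $S$ is a slice. By ($\mathbf{S1}$) the set $I_xS$ is open and there is a $G$-equivariant $f\colon GS\to G/I_x$ with $f^{-1}(I_x)=S$; and $GS=G\cdot(I_xS)$ is then $G$-invariant and open (a union of translates of the open set $I_xS$). By Remark \ref{R:nearslice}.$\mathbf{1}$, the natural $G$-map $G\times_{I_x}S\to GS$ is a homeomorphism. Now $G\times_{I_x}S$ is precisely an atlas exhibiting the quotient stack $[S/I_x]$: indeed $[S/I_x]\simeq[GS/G]$ because inducing up along $I_x\hookrightarrow G$ does not change the quotient stack (this is the standard fact $[S/I_x]\simeq[(G\times_{I_x}S)/G]$). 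Composing, $[S/I_x]\simeq[GS/G]$, and $[GS/G]\hookrightarrow[X/G]$ is an open embedding because $GS\subseteq X$ is $G$-invariant and open. Hence $[S/I_x]\to[X/G]$ is an open embedding. (I would also note that ($\mathbf{S2}$) is not needed for this direction — it is subsumed, via Remark \ref{R:nearslice}.$\mathbf{1}$, once we have the homeomorphism $G\times_{I_x}S\risom GS$; but it is convenient for the converse, where one wants to recognize a slice from local data.)

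For the converse, assume $\varphi\colon[S/I_x]\to[X/G]$ is an open embedding. Its image is an open substack, hence of the form $[V/G]$ for a $G$-invariant open $V\subseteq X$; and since $S\subseteq X$ maps into $V$ and $S$ meets the orbit $Gx$ only where... more carefully: the composite $S\to[S/I_x]\xrightarrow{\ \varphi\ }[X/G]$ factors through $V\hookrightarrow X\to[X/G]$, and by construction it is just the orbit map $S\subseteq X$ followed by $X\to[X/G]$. That $\varphi$ is an open embedding forces $V=GS$ (the saturation), so $GS$ is open. To get ($\mathbf{S1}$) I would form the fiber product of atlases: an atlas for $[S/I_x]$ is $G\times_{I_x}S$, and $\varphi$ being an isomorphism onto $[GS/G]$ gives a $G$-equivariant homeomorphism $G\times_{I_x}S\risom GS$; composing with the projection $G\times_{I_x}S\to G/I_x$ yields the required $G$-equivariant $f\colon GS\to G/I_x$ with $f^{-1}(I_x)=S$, which is exactly ($\mathbf{S1}$). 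For ($\mathbf{S2}$), pick any atlas $X\to\X:=[X/G]$ locally around $x$ together with a local section of $G\to G/I_x$ over some open $U\ni I_x$ (such a local section exists because $G\to G/I_x$ admits local sections — or, in the generality we need, because the groupoid presentation of $[X/G]$ has source map a local Serre fibration, so $G\times_{I_x}S\to GS$ splits locally); then $(u,s)\mapsto\chi(u)s$ is the composite of the local section with the homeomorphism of ($\mathbf{S1}$)-type just produced, restricted appropriately, hence a homeomorphism onto an open neighborhood of $x$.

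The main obstacle I expect is the converse, and specifically producing a \emph{local section} $\chi$ as in ($\mathbf{S2}$) in the stated topological-group generality (where $G$ need not be a Lie group): one must argue that the homeomorphism $G\times_{I_x}S\risom GS$ supplied abstractly by the open-embedding hypothesis is compatible with an actual continuous local splitting of $G\to G/I_x$ near the base orbit. I would handle this by invoking the local-Serre-fibration hypothesis built into the definition of topological stack (\cite{homotopy}, $\S$13.1): the atlas $G\times S\to[X/G]$ (or $G\times_{I_x}S\to[X/G]$) has local sections, and pulling such a section back along the point $x\to[X/G]$ gives the desired $\chi$ on a neighborhood of $x$. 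The remaining steps — invariance/openness of $GS$, the stack-theoretic identity $[S/I_x]\simeq[GS/G]$, and the translation "open substack $\leftrightarrow$ $G$-invariant open subset" — are routine and I would state them without belaboring the verifications.
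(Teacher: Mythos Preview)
Your forward direction (slice $\Rightarrow$ open embedding) is essentially the paper's argument: show $[S/I_x]\simeq[GS/G]$ via the chain $[S/I_x]\simeq[(G\times_{I_x}S)/G]\simeq[GS/G]$, invoking Remark~\ref{R:nearslice}.$\mathbf{1}$ for the second step. The paper routes the first equivalence through the auxiliary $(G\times I_x)$-space $G\times S$ rather than stating the induction formula directly, but that is cosmetic. One small quibble: your aside that ($\mathbf{S2}$) ``is not needed for this direction'' is misleading. Condition ($\mathbf{S2}$) is precisely what makes $G\times S\to G\times_{I_x}S$ locally trivial, hence what makes the first map in the paper's composition $[(G\times S)/(G\times I_x)]\to[(G\times_{I_x}S)/G]$ an equivalence of \emph{topological} stacks; it is also an ingredient in Palais's proof of Remark~\ref{R:nearslice}.$\mathbf{1}$ itself, so you cannot cite that remark and simultaneously dispense with ($\mathbf{S2}$).

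The paper explicitly declines to prove the converse (``We only prove the `only if' part which is what we need in the rest of the paper''), and your attempt at it has a real gap. The crux is producing the continuous local section $\chi\colon U\to G$ of $G\to G/I_x$ demanded by ($\mathbf{S2}$). Your two proposed justifications both fail in the stated generality: for an arbitrary topological group $G$ with closed subgroup $I_x$, the map $G\to G/I_x$ need \emph{not} admit continuous local sections; and the local-Serre-fibration hypothesis on the source map of the presenting groupoid yields only homotopy lifting, not honest continuous sections. A related unjustified step is your claim that the stack equivalence $[S/I_x]\simeq[GS/G]$ ``gives a $G$-equivariant homeomorphism $G\times_{I_x}S\risom GS$'': an equivalence of quotient stacks does not automatically match up these particular atlases. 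What one can legitimately extract from the fact that $S\to[GS/G]$ becomes an atlas is, locally on $GS$, a continuous $y\mapsto g(y)$ with $g(y)\cdot y\in S$; promoting this to a section of $G\to G/I_x$ (and to continuity of the putative $f\colon GS\to G/I_x$) is additional work, and plausibly requires a hypothesis such as the one in Remark~\ref{R:nearslice}.$\mathbf{2}$. So: the forward direction is fine and matches the paper; the converse, as written, is incomplete.
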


 \begin{proof}
  We only prove the `only if' part which is what we need in the rest
  of the paper. We will show that the map $[S/I_x] \to [X/G]$
  identifies $[S/I_x]$ with the open substack $[GS/G]$ of $[X/G]$.

  Consider $G\times S$ endowed with the $G\times I_x$ action defined
  by $(g,\alpha)\cdot(h,x):=(gh\alpha^{-1},\alpha x)$. Let $\varphi
  \: G\times I_x \to I_x$ and $f \: G \times S \to S$ be the
  projection maps. It is clear that the $\varphi$-equivariant map $f$
  induces an equivalence of quotient stacks $[(G \times S)/( G\times
  I_x)] \to [S/I_x]$. So, it is enough to show that the map
  $[(G \times S)/( G\times I_x)] \to [GS/G]$ is an equivalence of
  stacks. This map can be written as a composition $[(G \times S)/(
  G\times I_x)] \to [(G\times_{I_x}S)/G] \to [GS/G]$. The first map
  is obviously  an equivalence of stacks. The second map is an
  equivalence of stacks by virtue of Remark
  \ref{R:nearslice}.$\mathbf{1}$.
 \end{proof}

 \begin{defn}{\label{D:strong}}
  We say that a topological stack $\X$ has   {\bf  slice property},
  if for every point $x$ in $\X$ and every open substack $\U
  \subseteq \X$ containing $x$, there is an open substack $\V
  \subseteq \U$ such that $\V \cong [V/I_x]$, where $V$ is a
  topological space with an action of $I_x$ that has slice property
  at $x$. In the case where $I_x$ are discrete groups, such stacks
  are called {\bf Deligne-Mumford} topological
  stacks in \cite{homotopy}, $\S$14.
 \end{defn}

 \begin{lem}{\label{L:slice2}}
  Let  $\X$ be a topological stack that can be covered by open
  substacks of the from $[X/G]$, where $G$ is a topological group
  acting on $X$ with slice  property.  Then $\X$ has slice property.
 \end{lem}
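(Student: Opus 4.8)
The plan is to reduce the ``global'' slice property of $\X$ to the ``local'' statements we already have, using Lemma \ref{L:nearslice} as the main bridge between slices and open embeddings of quotient stacks. By hypothesis, $\X$ is covered by open substacks $\W_\alpha \subseteq \X$, each equivalent to $[X_\alpha/G_\alpha]$ for a topological group $G_\alpha$ acting on $X_\alpha$ with slice property. We must verify Definition \ref{D:strong}: given a point $x$ in $\X$ and an open substack $\U \subseteq \X$ containing $x$, we must produce an open substack $\V \subseteq \U$ with $\V \cong [V/I_x]$, where the action of $I_x$ on $V$ has slice property at the relevant point.

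First I would fix $x$ and $\U$. Since the $\W_\alpha$ cover $\X$, the point $x$ lies in some $\W_{\alpha_0} \cong [X_{\alpha_0}/G_{\alpha_0}]$; write $G = G_{\alpha_0}$, $X = X_{\alpha_0}$, and let $x_0 \in X$ be a lift of $x$, so that $I_x \cong I_{x_0}$ (the stabilizer of $x_0$ in $G$), compatibly with the identification $\W_{\alpha_0} \cong [X/G]$. Now $\U \cap \W_{\alpha_0}$ is an open substack of $[X/G]$ containing $x$; pulling back along the atlas $X \to [X/G]$, it corresponds to a $G$-invariant open subset $X' \subseteq X$ containing the $G$-orbit of $x_0$, and $\U \cap \W_{\alpha_0} \cong [X'/G]$. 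Since the action of $G$ on $X$ has slice property, so does its restriction to the invariant open set $X'$ (a slice at $x_0$ inside an open neighborhood contained in $X'$ is still a slice); in particular there is a slice $S$ at $x_0$ with $S \subseteq X'$. By Lemma \ref{L:nearslice}, the map $[S/I_{x_0}] \to [X'/G]$ is an open embedding; composing with $[X'/G] \cong \U \cap \W_{\alpha_0} \hookrightarrow \U$, we get an open substack $\V := [S/I_{x_0}] \cong [S/I_x]$ of $\U$ containing $x$. Finally, the action of $I_x \cong I_{x_0}$ on $S$ has slice property at $x_0$: this is Remark \ref{R:nearslice} together with the fact that $S$ itself is an $I_{x_0}$-slice, but more directly one checks that the slice property of $G$ on $X$ at $x_0$ restricts to the slice property of $I_{x_0}$ on $S$ at $x_0$ — given an $I_{x_0}$-invariant open neighborhood of $x_0$ in $S$, intersecting it (after saturating by $G$) with a smaller $G$-slice at $x_0$ produces the required smaller $I_{x_0}$-slice. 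This yields $\V$ with all the properties demanded by Definition \ref{D:strong}, completing the proof.

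The main obstacle I anticipate is the last point: verifying that the $I_x$-action on the slice $S$ again has slice property \emph{at the point $x_0$}, rather than merely that $S$ is a single slice. One must be careful that ``arbitrarily small'' slices are available. This is clean when $I_x$ is compact (Remark \ref{R:nearslice}.$\mathbf{3}$), but in general one should argue directly: an $I_{x_0}$-slice at $x_0$ inside $S$ is the same thing, via Lemma \ref{L:nearslice} applied to the group $I_{x_0}$ acting on $S$, as an open substack of $[S/I_{x_0}]$ of the form $[S'/I_{x_0}]$ with $I_{S'} = I_{x_0}$ — i.e. an open substack which is again a ``one-chart'' Deligne–Mumford-type piece — and such substacks form a neighborhood basis of $x$ in $[S/I_{x_0}] \cong \V$ precisely because they do so in $[X/G]$ near $x$ (by the slice property of the $G$-action) and $\V$ is open in there. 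So the verification reduces, once more, to the bookkeeping of Lemma \ref{L:nearslice}, and no genuinely new input is needed beyond keeping track of which group is acting on which space.
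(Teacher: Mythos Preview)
Your proposal is correct; the paper's own proof of this lemma is the single word ``Trivial,'' and your argument is a careful unpacking of exactly why---reduce to a chart $[X/G]$, take a $G$-slice $S$ at a lift $x_0$ of $x$, and set $\V=[S/I_{x_0}]$ via Lemma~\ref{L:nearslice}. Your anticipated obstacle (verifying that the $I_{x_0}$-action on $S$ itself has slice property at $x_0$) is a genuine detail the paper suppresses, and your stack-level resolution via arbitrarily small $G$-slices giving a neighborhood basis of open substacks $[S'/I_{x_0}]$ inside $\V$ is sound.
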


 \begin{proof}
  Trivial.
 \end{proof}

\subsection{Examples of stacks with slice property}{\label{SS:slice}}

We list some general classes of group actions with slice
property.

 \begin{itemize}
   \item[$\mathbf{1.}$] The continuous action of a finite group on a
    topological space has slice property.

  \item[$\mathbf{2.}$] Let $G$ be a Lie group
   (not necessarily compact) acting on a topological space $X$.
   Assume $X$ is a {\bf Cartan} $G$-space in the sense of
   (\cite{Palais}, Definition 1.1.2). Then, the action has slice
   property (\cite{Palais}, theorem 2.3.3). We recall from [ibid.]
   that $X$ is called a Cartan $G$-space if for every point of $X$
   there is an open neighborhood $U$ such that the set $\{g \in G \,
   | \, gU\cap U\neq\emptyset\}$ has compact closure. For instance,
   if $X$ is locally compact and the action is proper (i.e.,
   $G\times X \to X\times X$ is a proper map), then $X$ is a Cartan
   $G$-space. Also, if $X$ is completely regular and $G$
   compact Lie, then $X$ is a Cartan $G$-space.
 \end{itemize}

The following proposition is immediate.

 \begin{prop}{\label{P:strong1}}
  Let $\X$ be a topological stack (which can be covered by open
  substacks) of the form $[X/G]$ with $X$ a Cartan $G$-space.
  Then, $\X$ has slice property.
 \end{prop}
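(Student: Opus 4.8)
The plan is to deduce Proposition \ref{P:strong1} immediately from the combination of Lemma \ref{L:slice2} and the second bullet of $\S$\ref{SS:slice}, which in turn rests on Palais' slice theorem (\cite{Palais}, Theorem 2.3.3). The statement has two cases folded into it: either $\X$ is globally of the form $[X/G]$ with $X$ a Cartan $G$-space, or $\X$ is merely \emph{covered} by open substacks of this form. In both cases the strategy is the same: reduce the ``slice property of a stack'' (Definition \ref{D:strong}) to the ``slice property of a group action'' (Definition \ref{D:mild}), and then invoke Palais.

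First I would treat the global case $\X\cong[X/G]$ with $X$ a Cartan $G$-space. By the second bullet of $\S$\ref{SS:slice} (i.e.\ \cite{Palais}, Theorem 2.3.3), the action of $G$ on $X$ has slice property: for every $x\in X$ and every open neighborhood of $x$ there is a slice $S$ at $x$ contained in it. I then want to verify the conditions of Definition \ref{D:strong}. Given a point of $\X$, represent it by some $x\in X$; given an open substack $\U\subseteq\X$, pull it back to a $G$-invariant open subset $U_0\subseteq X$ containing $x$. Shrinking, choose a slice $S$ at $x$ with $S\subseteq U_0$ (using that slices can be taken arbitrarily small inside $U_0$ — or, where that is not automatic, using that it suffices to find the slice inside the $G$-saturation, which is contained in $\U$). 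By Lemma \ref{L:nearslice}, the map $[S/I_x]\to[X/G]=\X$ is an open embedding, so $\V:=[S/I_x]$ is an open substack of $\X$ contained in $\U$, and it is of the required form $[V/I_x]$ with $V=S$ carrying an $I_x$-action. The only remaining point is that the $I_x$-action on $S$ must itself have slice property at $x$; this is where one uses that $S$ is a slice and that slices of slices behave well — concretely, $S$ is an open neighborhood of $x$ on which $I_x$ acts, and one checks directly from the definition (or from Remark \ref{R:nearslice}.$\mathbf{1}$) that $S$ is a slice at $x$ for the $I_x$-action, indeed any $I_x$-invariant open neighborhood of $x$ in $S$ is already a slice.

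For the general case, suppose $\X$ is covered by open substacks $\X_\lambda\cong[X_\lambda/G_\lambda]$ with each $X_\lambda$ a Cartan $G_\lambda$-space. By the global case each $\X_\lambda$ has slice property. Then Lemma \ref{L:slice2} — whose proof is declared trivial — says that a topological stack covered by open substacks with slice property has slice property. (Alternatively, one re-runs the argument of the previous paragraph directly: any point $x$ and open $\U$ can be replaced by $x$ and $\U\cap\X_\lambda$ for a suitable $\lambda$, and a slice found inside $[X_\lambda/G_\lambda]$ is automatically an open substack of $\X$.) This completes the proof.

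The step I expect to be the genuine obstacle — or at least the only non-formal point — is the claim that the $I_x$-action on the slice $S$ again has slice property \emph{at $x$}, so that $\V=[S/I_x]$ genuinely satisfies the hypothesis in Definition \ref{D:strong} rather than merely being a quotient stack of some $I_x$-space. Everything else is bookkeeping: pulling open substacks back to invariant open sets along the atlas $X\to[X/G]$, applying Lemma \ref{L:nearslice} to identify $[S/I_x]$ with an open substack, and citing Palais for the existence of (arbitrarily small) slices. For the delicate point one argues that $x$ is a fixed point of $I_x$, that $S$ is $I_x$-invariant by construction (it is the fiber of $GS\to G/I_x$ over the base point, hence stable under the stabilizer $I_x$), and that conditions $\mathbf{S1}$–$\mathbf{S2}$ of Definition \ref{D:mild} for the $I_x$-action on $S$ at $x$ hold trivially with $U$ a point and $\chi$ constant — so in fact every $I_x$-invariant open neighborhood of $x$ inside $S$ is a slice for the $I_x$-action, and slice property at $x$ is immediate. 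With that observation in hand the proposition follows at once, which is presumably why the authors call it ``immediate.''
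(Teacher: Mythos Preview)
Your proposal is correct and matches the paper's approach exactly: the paper gives no proof at all beyond the phrase ``the following proposition is immediate,'' and you have correctly identified the ingredients (Palais' slice theorem from bullet~$\mathbf{2}$ of $\S$\ref{SS:slice}, Lemma~\ref{L:nearslice}, and Lemma~\ref{L:slice2}) that make it so. Your isolation of the only non-formal point---that the $I_x$-action on the slice $S$ again has slice property at the fixed point $x$---is apt, and your argument for it is right (it may be worth noting explicitly that $I_x$ is compact in the Cartan setting, which is what guarantees that every open neighborhood of $x$ in $S$ contains an $I_x$-invariant one).
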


  \begin{lem}{\label{L:opensubstack}}
    Let $[R\sst{}X]$ be a topological groupoid. Assume that the
    source map $s\: R \to X$ is open and has the property that for
    every open $V \subseteq R$ the induced map $s|_V \: V \to s(V)$
    admits local sections. Then, for every open $U \subseteq X$, the
    induced map $[U/R|_U] \to [X/R]$ is an open embedding. Here,
    $[R|_U \sst{} U]$ stands for the restriction of
    $R$ to $U$, which is defined by $R|_U=(U\x U)\x_{X\x X}R$.
   \end{lem}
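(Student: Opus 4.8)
The plan is to verify that $[U/R|_U]\to[X/R]$ is an open embedding by base-changing along the atlas $p\:X\to[X/R]$ and showing it pulls back to the inclusion of an open subspace of $X$; the substance will be to identify that subspace with $\tilde{U}:=t(s^{-1}(U))$. First I would extract the consequences of the hypotheses that are actually used. The inversion map $i\:R\to R$ is a homeomorphism interchanging $s$ and $t$, so $t$ is open as well, and for every open $V\subseteq R$ the map $t|_V\:V\to t(V)$ admits local sections. Hence $s^{-1}(U)$ is open in $R$, its image $\tilde{U}=t(s^{-1}(U))$ is open in $X$, and $t|_{s^{-1}(U)}\:s^{-1}(U)\to\tilde{U}$ admits local sections, so it is an epimorphism of topological spaces. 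I would also note that the defining formula $R|_U=(U\x U)\x_{X\x X}R$ says precisely that the groupoid morphism $[R|_U\sst{}U]\to[R\sst{}X]$ is cartesian over objects, so the induced morphism $j\:[U/R|_U]\to[X/R]$ of quotient stacks is a monomorphism (in particular representable, and after any base change to a space it is represented by a subsheaf of that space).

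The heart of the argument is the computation of $X\x_{[X/R]}[U/R|_U]$. Write $q\:U\to[U/R|_U]$ for the canonical atlas. Since $j\circ q$ is the composite $U\hookrightarrow X\to[X/R]$, the standard identification $X\x_{[X/R]}X\cong R$ specializes (after possibly interchanging $s$ and $t$, which is harmless by the previous paragraph) to $X\x_{[X/R]}U\cong s^{-1}(U)$, with the projection onto the first $X$-factor becoming $t|_{s^{-1}(U)}$. Since $q$ is an epimorphism, so is its base change $s^{-1}(U)\cong X\x_{[X/R]}U\to X\x_{[X/R]}[U/R|_U]$, while $X\x_{[X/R]}[U/R|_U]\to X$ is a monomorphism (base change of $j$ along $p$). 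Therefore $X\x_{[X/R]}[U/R|_U]$, regarded as a subsheaf of $X$, is the image of $t|_{s^{-1}(U)}\:s^{-1}(U)\to X$. This is where the hypotheses enter essentially: because $t|_{s^{-1}(U)}$ admits local sections onto $\tilde{U}$, this image is not merely some subsheaf concentrated on $\tilde{U}$ but is exactly the open subspace $\tilde{U}\subseteq X$. Thus the base change of $j$ along $p$ is the inclusion $\tilde{U}\hookrightarrow X$, an open embedding of spaces; since being an open embedding of stacks is a property of representable morphisms that is local on the target, and hence may be checked after base change along an atlas, $j$ is an open embedding.

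I expect the only real difficulty to be the identification in the middle paragraph, and within it the precise role of the two hypotheses: openness of $s$ (equivalently of $t$) is what makes $\tilde{U}$ an \emph{open} subset of $X$ rather than an arbitrary one, and the local-section property is exactly what upgrades the sheaf-theoretic image of $t|_{s^{-1}(U)}$ from an a priori smaller subsheaf supported on $\tilde{U}$ to the honest open subspace $\tilde{U}$ (equivalently, it is what makes $t|_{s^{-1}(U)}$ an epimorphism of spaces). The remaining ingredients --- that $R|_U=(U\x U)\x_{X\x X}R$ forces $j$ to be a monomorphism, that $X\x_{[X/R]}X\cong R$, and that open embeddings of topological stacks are detected on an atlas --- are formal and belong to the general theory of \cite{homotopy}.
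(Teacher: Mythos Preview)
Your argument is correct and follows essentially the same idea as the paper's: both observe that $[U/R|_U]\to[X/R]$ is always a monomorphism and then identify its image with the open orbit $\tilde U=\mathcal{O}(U)=t\big(s^{-1}(U)\big)$, the local-section hypothesis being precisely what supplies the needed epimorphism. The only difference is packaging: the paper phrases the second step as a Morita equivalence $[U/R|_U]\simeq[\mathcal{O}(U)/R|_{\mathcal{O}(U)}]$ with the saturated open substack, whereas you verify openness by base-changing along the atlas $X\to[X/R]$ and reading off the pullback as $\tilde U$.
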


  \begin{proof}
     The map $[U/R|_U] \to [X/R]$ is always a monomorphism (i.e.,
     fully faithful), without any assumptions on the source map $s$.
     The extra assumption on $s$  implies that $[U/R|_U]$ is
     equivalent to the open substack $[\OO(U)/R|_{\OO(U)}]$ of
     $[X/R]$, where $\OO(U)=t\big(s^{-1}(U)\big)$ is the orbit of
     $U$ (which is open).
  \end{proof}

Every groupoid $[R\sst{}X]$ in which the source map $s \: R\to X$ is
locally isomorphic to the projection map $Y\x X\to X$of a product
has the property required in Lemma \ref{L:opensubstack}. These
include action groupoids   of  topological groups acting
continuously on topological spaces. Lie groupoids
also have this property.

 \begin{prop}{\label{P:strong2}}
  Let $\X=[X/R]$, where $[R\sst{}X]$ is a proper Lie groupoid.
  Then, $\X$ has slice property.
 \end{prop}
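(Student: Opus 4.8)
The plan is to reduce Proposition~\ref{P:strong2} to Proposition~\ref{P:strong1} by invoking the slice theorem for proper Lie groupoids. The heart of the argument is local: I want to show that every point of $\X$ has an open neighborhood of the form $[X'/G]$ with $X'$ a Cartan $G$-space, and then conclude by Lemma~\ref{L:slice2} (together with Proposition~\ref{P:strong1} applied to each such chart). So the work is entirely about finding good local charts for a proper Lie groupoid.

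First I would recall the structure theory of proper Lie groupoids. Let $[R\sst{}X]$ be a proper Lie groupoid, so that the anchor map $(s,t)\: R \to X\x X$ is proper. Fix a point $x\in X$; its isotropy group $G_x = s^{-1}(x)\cap t^{-1}(x)$ is then a compact Lie group (properness forces $(s,t)^{-1}(x,x)$ to be compact, and it is a Lie group). The key input is the \emph{linearization} (or \emph{slice}) theorem for proper Lie groupoids (due to Weinstein and Zung; see also Crainic--Struchiner): in a neighborhood of the orbit $\OO_x$ through $x$, the groupoid $[R\sst{}X]$ is isomorphic to the action groupoid of $G_x$ acting linearly on a slice $N_x$ (a $G_x$-invariant neighborhood of $0$ in the normal space to the orbit), crossed with the orbit direction. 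Concretely, there is a $G_x$-invariant open neighborhood $V$ of $x$ in $X$ and an equivalence of groupoids $[R|_V \sst{} V] \simeq [N_x / G_x]$, hence an equivalence of stacks $[V/R|_V] \simeq [N_x/G_x]$. Since $G_x$ is \emph{compact} and $N_x$ (being an open subset of a Euclidean space, in particular metrizable and locally compact) is completely regular, $N_x$ is a Cartan $G_x$-space by the remark in $\S$\ref{SS:slice}, Example~2.

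Next I would assemble the local pictures. Using Lemma~\ref{L:opensubstack} — whose hypotheses are satisfied by $[R\sst{}X]$ since the source map of a Lie groupoid is a submersion, hence open and locally a projection, so admits local sections — the open substack $[\OO(V)/R|_{\OO(V)}]$ of $\X$ is equivalent to $[V/R|_V]$, where $\OO(V)$ is the (open, $R$-saturated) orbit of $V$. Thus every point of $\X$ lies in an open substack equivalent to $[N_x/G_x]$ with $N_x$ a Cartan $G_x$-space. By Proposition~\ref{P:strong1} each such open substack has slice property, and by Lemma~\ref{L:slice2} (covering $\X$ by these open substacks) $\X$ itself has slice property. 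This completes the argument.

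**The main obstacle** is the correct invocation of the linearization theorem for proper Lie groupoids — specifically, getting a genuine \emph{groupoid} equivalence $[R|_V \sst{} V] \simeq [N_x/G_x]$ near the orbit, not merely a statement about leaf spaces or about the isotropy representation. One must be careful that properness is used twice: once to ensure $G_x$ is compact (so Palais's theory applies), and once to guarantee the linearization holds on an invariant neighborhood rather than only formally. If one prefers to avoid quoting the full linearization theorem, an alternative is to construct the slice directly: choose a complete $R$-invariant Riemannian metric on $X$ (which exists by an averaging argument using properness and a Haar system), take $N_x$ to be a small geodesic normal disk to the orbit, and check properties $\mathbf{S1}$--$\mathbf{S2}$ by hand — but this essentially reproves the slice theorem, so citing it is cleaner. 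Either way, once the local chart $[N_x/G_x]$ is in hand with $N_x$ Cartan, the rest is formal.
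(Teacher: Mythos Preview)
Your approach is essentially the paper's: both invoke the Weinstein--Zung linearization to exhibit, near each $x$, an open substack of $\X$ equivalent to a quotient $[\Sigma/I_x]$ with $I_x$ compact acting linearly, and both appeal to Lemma~\ref{L:opensubstack} to see this sits openly in $\X$. The paper works directly with a transversal $\Sigma$ to the orbit (noting $t|_{s^{-1}\Sigma}$ is a submersion, so the restricted groupoid presents an open substack) and then applies Zung's fixed-point linearization to $R|_\Sigma$, thereby verifying Definition~\ref{D:strong} directly rather than routing through Proposition~\ref{P:strong1} as you do; this is only a difference in packaging.

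One imprecision worth flagging: your ``$G_x$-invariant open neighborhood $V$ of $x$ in $X$ with $[R|_V\sst{}V]\simeq[N_x/G_x]$'' cannot be a groupoid \emph{isomorphism} when $V$ is open in $X$ --- the restricted groupoid then contains arrows along the orbit and has a base of the wrong dimension (unless the orbit is a point). What holds is a Morita equivalence, which is enough for the stack conclusion you draw; but Lemma~\ref{L:opensubstack} as stated needs $V$ open, while the linearization as a genuine action groupoid lives over the transversal, which is not open. The paper resolves this tension by taking $\Sigma$ transversal from the outset and arguing separately (via the submersion property) that $[\Sigma/R|_\Sigma]\to\X$ is an open embedding.
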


 \begin{proof}
  Let $x \in X$ be an arbitrary point. By (\cite{Weinstein},
  Proposition 2.4), the orbit $\OO(x)$ is a closed submanifold of
  $X$. Choose a small enough transversal $\Sigma$ to the orbit
  $\OO(x)$ at $x$. The map $t|_{s^{-1}\Sigma} \: s^{-1}\Sigma \to X$
  is a submersion, so the quotient stack of the restriction groupoid
  $[R|_{s^{-1}\Sigma} \sst{} \Sigma]$ is an open substack of $\X$
  by Lemma \ref{L:opensubstack}. Now, $[R|_{s^{-1}\Sigma} \sst{}
  \Sigma]$ is a Lie groupoid that has $x$ as a fixed point. So, by
  (\cite{Zung}, Theorem 1.1), we can shrink $\Sigma$ (as small as we
  want) and assume that $[R|_{s^{-1}\Sigma} \sst{} \Sigma]$  is
  isomorphic to a (linear) action groupoid of the stabilizer group $I_x$.
 \end{proof}

\section{Strongly locally path-connected topological
stacks}{\label{S:Strongly}}

We begin with a definition.

 \begin{defn}{\label{D:stronglylc}}
   A topological stack $\X$ is {\bf strongly locally path-connected}
   if it has slice property and, furthermore,
   the topological spaces $V$ of Definition \ref{D:strong}
   can be chosen to be locally path-connected.
 \end{defn}

 \begin{lem}{\label{L:stronglpc}}
   Let $\X$ be a locally path-connected
   topological stack with slice property.
   Assume that for every $x \in \X$ the inertia group $I_x$
   is locally path-connected. Then, $\X$ is strongly locally
   path-connected.
 \end{lem}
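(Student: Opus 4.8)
The plan is to verify Definition~\ref{D:stronglylc} directly: starting from a point $x\in\X$ and an open substack $\U$ containing $x$, I must produce an open substack $\V\subseteq\U$ of the form $[V/I_x]$ with $V$ locally path-connected and carrying an $I_x$-action with slice property at $x$. By the hypothesis that $\X$ has slice property, I already get such a $\V\cong[V/I_x]$ inside $\U$ with $V$ some $I_x$-space having slice property at $x$; the only thing missing is that $V$ be locally path-connected. So the real content is a local-to-global argument: shrink $V$ (equivariantly) to arrange local path-connectedness while keeping slice property.

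First I would invoke that $\X$ is locally path-connected, i.e.\ there is an atlas $p\:X\to\X$ with $X$ locally path-connected (Definition~\ref{D:lpc}). Pulling this atlas back along $[V/I_x]\hookrightarrow\X$ gives an atlas $\tilde V\to[V/I_x]$ with $\tilde V$ locally path-connected, and $\tilde V\to V$ is the base change of the atlas $V\to[V/I_x]$ along $p$, hence admits local sections (it is the pullback of $p$, which is an epimorphism of topological stacks, so locally has sections). Now here is the key point: by Remark~\ref{R:nearslice}.$\mathbf{1}$ together with slice property, around $x$ the space $V$ looks like $U\times S$ for a slice $S$ and an open $U\subseteq G/I_x$ — in fact for the $I_x$-action the relevant picture is even simpler. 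Rather than chase that, I would argue as follows: the projection $\tilde V\to V$ has local sections and $\tilde V$ is locally path-connected, so by Lemma~\ref{L:lpc}, $V$ itself is locally path-connected. That is the whole argument — $V$ inherits local path-connectedness from any locally path-connected atlas above it, via a map with local sections.

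Let me re-examine the logic so the write-up is airtight. Given $x$ and $\U$, slice property yields open $\V\cong[V/I_x]\subseteq\U$. Choose a locally path-connected atlas $X\to\X$ (exists since $\X$ is locally path-connected). Form $X_\V:=X\times_\X\V$; then $X_\V$ is open in $X$, hence locally path-connected, and $X_\V\to\V$ is an atlas. The composite $X_\V\to\V\cong[V/I_x]$ together with the atlas $V\to[V/I_x]$ gives $X_\V\times_{[V/I_x]}V\to V$; but $X_\V\times_{[V/I_x]}V$ maps to $X_\V$ with local sections (base change of $V\to[V/I_x]$, an epimorphism) from a... hmm — I actually want a locally path-connected \emph{space} mapping to $V$ with local sections. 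Cleanest: $W:=X_\V\times_{[V/I_x]}V$ is a space, it maps to $X_\V$ (base change of the atlas $V\to[V/I_x]$, so with local sections) and $X_\V$ is locally path-connected, so by Lemma~\ref{L:lpc} $W$ is locally path-connected; and $W\to V$ is the base change of the atlas $X_\V\to[V/I_x]$ along $V\to[V/I_x]$, hence also has local sections; so by Lemma~\ref{L:lpc} again, $V$ is locally path-connected. Thus $[V/I_x]=\V\subseteq\U$ with $V$ an $I_x$-space that has slice property at $x$ (unchanged) and is locally path-connected, which is exactly what Definition~\ref{D:stronglylc} demands.

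The main obstacle — really the only subtlety — is making sure each map to which I apply Lemma~\ref{L:lpc} genuinely admits local sections; this comes down to the fact that atlases of topological stacks are epimorphisms and epimorphisms are stable under base change, together with the standing assumption on topological stacks that the source map of a presenting groupoid is a local Serre fibration (so atlases have local sections). I note that the hypothesis ``$I_x$ locally path-connected'' plays no visible role in this argument as I have organized it — it would be needed if one instead tried to build $V$ by hand out of a slice $S$ and a chart $U\subseteq G/I_x$ — so in the write-up I would either use the cleaner atlas argument above (and remark that the $I_x$-hypothesis is then automatic or superfluous in this formulation) or, to stay faithful to the intended proof, combine: take the slice decomposition $U\times S\cong$ (neighborhood of $x$) from $\mathbf{S2}$, observe $U\subseteq G/I_x$ is locally path-connected since $G/I_x$ is (as $I_x$ is locally path-connected and $G\to G/I_x$ has local sections, apply Lemma~\ref{L:lpc}), and then get local path-connectedness of $V$ near $x$, globalizing by the atlas argument for the rest. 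I expect the slick atlas proof to be the one to present.
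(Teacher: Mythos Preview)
There is a genuine gap: you apply Lemma~\ref{L:lpc} in the wrong direction. That lemma says that if $f\:Y\to X$ admits local sections and the \emph{source} $Y$ is locally path-connected, then the \emph{target} $X$ is. You invoke it for the map $W\to X_\V$ (with local sections) to conclude that $W$ is locally path-connected because $X_\V$ is; but $X_\V$ is the target here, not the source. The implication simply does not go that way: take $X_\V$ a point and $W$ any non-locally-path-connected space to see the failure. Your own closing remark that the hypothesis ``$I_x$ locally path-connected'' plays no visible role is exactly the red flag---the hypothesis is essential, and it is needed precisely at the step where you try to get $W$ locally path-connected.

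The fix, which is what the paper does, is to notice that $W=X_\V\times_{[V/I_x]}V\to X_\V$ is not merely a map with local sections: it is the base change of the quotient map $V\to[V/I_x]$, hence an $I_x$-torsor over $X_\V$. Thus locally $W$ is homeomorphic to (an open piece of) $X_\V\times I_x$, and since both $X_\V$ and $I_x$ are locally path-connected, so is $W$. Now your second application of Lemma~\ref{L:lpc}, to $W\to V$, is in the correct direction and finishes the argument. (The paper phrases this with $Z=Y\times_{\X}X$ a $G$-torsor over the locally path-connected atlas $Y$, with $G=I_x$; the content is identical.)
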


 \begin{proof}
   We may assume that $\X=[X/G]$, where $G$ is locally
   path-connected. Since $\X$ is locally path-connected, there is an
   atlas $p\: Y\to \X$ such that $Y$ is locally path-connected. Set
   $Z=Y\x_{\X}X$. Then, $Z$ is a $G$-torsor over $Y$. Since $Y$ and
   $G$ are both locally path-connected, so is $Z$. It follows from
   Lemma \ref{L:lpc} that $X$ is locally path-connected.
 \end{proof}

\begin{prop}{\label{P:stronglylc}}
   Let $\X$ be a locally path-connected topological stack.
   Assume either of the following holds:
  \begin{itemize}
    \item[$\mathbf{i.}$]  \, $\X$ is locally isomorphic to a quotient
     stack $[X/G]$ with $G$  a  Lie group and $X$ a Cartan $G$-space;

    \item[$\mathbf{ii.}$] \, $\X$ is the quotient stack of a proper
     Lie groupoid.
   \end{itemize}
 Then, $\X$ is strongly locally  path-connected.
\end{prop}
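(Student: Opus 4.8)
The plan is to reduce both cases to the situation covered by Lemma \ref{L:stronglpc}, namely a locally path-connected stack with slice property whose inertia groups are locally path-connected, and then simply invoke that lemma. Since local path-connectedness of $\X$ is assumed in both (i) and (ii), the two things to check are: the slice property, and that every inertia group $I_x$ is locally path-connected.

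First I would dispose of the slice property. In case (i) this is exactly Proposition \ref{P:strong1} (it is built from Example 2 of $\S$\ref{SS:slice}, the Cartan/Palais slice theorem, together with Lemma \ref{L:slice2} to glue the local pieces). In case (ii) it is Proposition \ref{P:strong2} (proper Lie groupoids). So the slice property holds in both cases with nothing further to do.

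Next I would verify that the inertia groups are locally path-connected. In case (i), writing a neighborhood of $x$ as $[X/G]$ with $G$ a Lie group, the inertia group $I_x$ is the stabilizer of $x$, which is a closed subgroup of the Lie group $G$, hence itself a Lie group, hence (being a manifold) locally path-connected. In case (ii), $\X$ is the quotient of a proper Lie groupoid $[R\sst{}X]$; the inertia group of $x\in X$ is the isotropy group $R_x = s^{-1}(x)\cap t^{-1}(x)$, which for a Lie groupoid is a closed embedded Lie subgroup of the source-fiber, hence again a Lie group and locally path-connected. (One can also read this off the conclusion of Proposition \ref{P:strong2}, where a neighborhood is presented as a linear action groupoid of $I_x$, making $I_x$ manifestly a Lie group.)

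With the slice property in hand and the inertia groups shown to be locally path-connected, Lemma \ref{L:stronglpc} applies verbatim and yields that $\X$ is strongly locally path-connected, finishing the proof. I do not expect any genuine obstacle here: the statement is essentially a packaging of Propositions \ref{P:strong1}, \ref{P:strong2} and Lemma \ref{L:stronglpc}, and the only mild point is the routine observation that stabilizers/isotropy groups in the Lie setting are Lie groups and hence locally path-connected — which is why the excerpt calls the proposition ``immediate'' once these ingredients are on the table.
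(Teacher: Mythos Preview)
Your proposal is correct and follows exactly the paper's own route: invoke Propositions \ref{P:strong1} and \ref{P:strong2} for the slice property in cases (i) and (ii) respectively, then apply Lemma \ref{L:stronglpc}. The paper's proof is literally the one-liner ``Use Propositions \ref{P:strong1}, \ref{P:strong2}, and Lemma \ref{L:stronglpc}'', and your only addition is to make explicit the (implicit) observation that the inertia groups, being Lie groups, are locally path-connected.
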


\begin{proof}
  Use Propositions \ref{P:strong1}, \ref{P:strong2},
  and Lemma \ref{L:stronglpc}.
\end{proof}


Recall from  Example 2 of $\S$\ref{SS:slice} that the Cartan
condition is automatically satisfied if any one of the following is
true: 1) $G$ is finite; 2)  $G$ is compact Lie and $X$ is completely
regular; 3)  $G$ is an arbitrary Lie group, $X$ is locally compact,
and the action is proper, i.e., $G\times X \to X\times X$
is a proper map.

  \begin{prop}{\label{P:strongcovering}}
    Let $p \: \Y \to \X$ be a covering map of topological stacks,
    and assume that $\X$ is  strongly locally path-connected. Then,
    for every point $x \in \X$, there exists an open substack $x\in
    \U \subseteq \X$ with the following properties:
    \begin{itemize}
      \item[$\mathbf{i.}$] $\U\cong[U/I_x]$, where $U$ is a locally
       path-connected topological space with an action of $I_x$ that
       fixes the (unique) lift of $x$ to $U$ (which we denote again
       by $x$) and has slice property at $x$;

      \item[$\mathbf{ii.}$] $p^{-1}(\U)\cong \coprod_{k\in K}
       [U/H_k]$, where $H_k$, for $k$ ranging in some index set $K$,
       are open-closed subgroups of $I_x$ acting on $U$ via $I_x$.
    \end{itemize}
        In particular,  $\Y$ is also strongly locally path-connected.
 \end{prop}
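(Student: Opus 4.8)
The plan is to work locally on $\X$ and then patch. First, fix a point $x \in \X$. Since $\X$ is strongly locally path-connected, choose an open substack $\U_0 \cong [U_0/I_x]$ containing $x$, with $U_0$ locally path-connected and the $I_x$-action having slice property at $x$. Passing to the slice itself (via Lemma \ref{L:nearslice}, since a slice gives an open embedding), we may shrink $U_0$ so that $x$ is a \emph{fixed} point of the $I_x$-action; this gives property (\textbf{i}). The key point is that now $\U_0$ is itself strongly locally path-connected and has the homotopy type that makes its covering theory transparent: a connected covering stack of $[U_0/I_x]$ is the quotient stack of a connected covering space $\widetilde{U_0} \to U_0$ together with a compatible action of a subgroup of $I_x$. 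I would make this precise by noting that, because $x$ is fixed and $U_0$ is locally path-connected, the map $\pi_1(U_0,x) \to \pi_1([U_0/I_x],x)$ fits into the usual short exact sequence $1 \to \pi_1(U_0,x) \to \pi_1([U_0/I_x],x) \to I_x \to 1$ (again using that $x$ is a fixed point, so the sequence splits, or at least the $I_x$-quotient is a direct factor in the relevant Galois-theoretic sense).

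Second, I would analyze $p^{-1}(\U_0)$. Since $p$ is a covering map, $p^{-1}(\U_0) \to \U_0$ is a covering stack of $[U_0/I_x]$. Decompose it into connected components; by axiom-type considerations each component is a connected covering stack of $[U_0/I_x]$, hence corresponds to a subgroup of $\pi_1([U_0/I_x],x)$ containing... here is where I would shrink further. The crucial move: after possibly shrinking $U_0$ to a still smaller $I_x$-invariant open neighborhood $U$ of $x$, we may assume $U$ is \emph{simply connected} (or at least that $\pi_1(U,x)$ maps trivially into the relevant fundamental groups) — wait, we only have local path-connectedness, not semilocal 1-connectedness, so this is not automatic. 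Instead, the honest approach is: each connected component $\V_k$ of $p^{-1}(\U_0)$ is a covering stack of $[U_0/I_x]$; pull it back along the atlas $U_0 \to [U_0/I_x]$ to get a connected covering space of $U_0$. Since $x$ is a fixed point, the inertia of the lift $\tilde x$ injects into $I_x$; call its image $H_k$, an open-closed subgroup (open-closed because $I_x \to I_x$, being a covering-stack inertia map, has open-closed image — this uses Proposition \ref{P:diagonal}). Then $\V_k$ is obtained from the covering space of $U_0$ determined by $H_k$ acting — but that covering space \emph{is} $U_0$ itself when we have chosen $U_0$ small enough that the covering $p^{-1}(\U_0)\to\U_0$ restricted over the atlas $U_0$ is trivial. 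This last triviality is the genuine technical point and is exactly where strong local path-connectedness (versus mere local path-connectedness) is used: it lets us take $U$ so small that $U \times_{\X} \Y \to U$ is a trivial covering, i.e., $U \times_{\X}\Y \cong U \times (\text{discrete set})$.

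So the key steps, in order: (1) use strong local path-connectedness to get $\U_0 = [U_0/I_x]$ with $U_0$ locally path-connected and $x$ a fixed point (shrinking via Lemma \ref{L:nearslice} and Definition \ref{D:strong}); (2) shrink $U_0$ to $U$ so that the covering space $U\times_{\X}\Y \to U$ is trivial — this is where I expect the main obstacle, and it should follow from the definition of covering map (Definition \ref{D:covering}) applied to the atlas $U \to \U_0 \subseteq \X$ together with local path-connectedness of $U$, allowing a connected neighborhood over which a covering trivializes; (3) write $p^{-1}(\U) = [U\times_{\X}\Y / I_x]$ using Corollary \ref{C:openclosed}-style groupoid bookkeeping, decompose the trivial covering $U \times_{\X}\Y = U \times K$ into $I_x$-orbits indexed by $K$, and read off that each orbit contributes a summand $[U/H_k]$ with $H_k \le I_x$ the stabilizer, which is open-closed since $\Y \to \X$ being a covering forces the inertia maps $I_{\tilde x}\to I_x$ to be isomorphisms onto open-closed subgroups (Proposition \ref{P:diagonal}); (4) conclude, the final sentence being immediate since each $[U/H_k]$ is visibly of the form required by Definition \ref{D:stronglylc} ($U$ locally path-connected, $H_k$ acting with slice property at the fixed point $x$ because the $I_x$-action does and $H_k \le I_x$ is open-closed). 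The main obstacle is step (2): reconciling the local triviality of the covering over the \emph{space} $U$ with the need for $U$ to remain $I_x$-invariant; I would handle this by first trivializing over a small neighborhood of $x$ in the slice $S$ and then replacing it by its $I_x$-saturation $I_x \cdot S' \subseteq S$, noting $I_x \cdot S' = S'$ since $x$ is fixed and $S'$ can be taken $I_x$-invariant inside the slice (or by shrinking $S$ itself, which is allowed by the slice property giving arbitrarily small slices when combined with the fixed-point condition).
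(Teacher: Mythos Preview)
Your proposal is correct and, once you strip away the false starts in the first two paragraphs (the fundamental-group exact sequence and the attempt to make $U$ simply connected), your steps (1)--(4) are exactly the paper's argument: shrink to $\U=[U/I_x]$ with $x$ fixed and $U$ locally path-connected, trivialize the pulled-back covering over $U$, and then analyze the resulting open-closed subgroupoid (Corollary~\ref{C:openclosed}) to decompose $p^{-1}(\U)$ into pieces $[U/H_k]$.

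Two small points of comparison. First, the ``main obstacle'' you flag in step~(2)---getting the trivializing neighborhood $U$ to be simultaneously $I_x$-invariant and path-connected---is exactly the point where the paper's proof is terse (it just says ``replacing $U$ with \dots\ the connected component of $x$''); your resolution via arbitrarily small slices at the fixed point $x$ is the right way to make this honest, since a slice at a fixed point is automatically $I_x$-invariant and open, and then the connected component of $x$ in it is $I_x$-invariant and path-connected. Second, your step~(3) formula $p^{-1}(\U)=[U\times_{\X}\Y/I_x]$ is not literally correct (that quotient would be a disjoint union of copies of $\U$, not $\V$); what you need, and what your reference to Corollary~\ref{C:openclosed} shows you intend, is that $\V$ is the quotient of $V=U\times K$ by an \emph{open-closed subgroupoid} of the $I_x$-action groupoid, and the paper carries out this groupoid restriction explicitly rather than via your orbit/stabilizer shorthand---but the two descriptions are equivalent, since the $I_x$-action on the discrete fiber $K$ over $x$ does govern the decomposition.
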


 \begin{proof}
    Denote $I_x$ by $G$ throughout the proof.

    By shrinking $\X$ around $x$ we may assume that $\X=[X/G]$,
    where $G$ and $X$ satisfy ($\mathbf{i}$). Consider the
    corresponding  atlas $X \to \X$, and let $Y \to \Y$ be the
    pull-back atlas for $\Y$. The map $q \: Y \to X$, being a pull
    back of $p$, is again a covering map. There is an open set $U
    \subseteq X$ containing $x$ over which $q$ trivializes. After
    replacing $U$ with a smaller open set containing $x$ (say, by
    the connected component of $x$ in $U$), we may assume that $U$
    is $G$-invariant and path-connected. Set $\U=[U/G]$. We claim
    that $\U$ has the desired property.

    Let $\V=p^{-1}(\U)\subseteq \Y$, and  $V=q^{-1}(U)\subseteq Y$.
    Then,  $V$ is an atlas for $\V$ and  is of the form
                  $$V=\coprod_{j\in J} U_j, \ U_j=U,$$
    for some    index set $J$. Set $R=V\x_{\V}V$ (so
    $\V\cong[V/R]$). Note that $V$ can  be viewed as an atlas for
    $\U$ too, and if we set $R'=V\x_{\U}V$, then the groupoid
    $[R\sst{}V]$ is an open-closed subgroupoid of $[R'\sst{}V]$
    (Corollary \ref{C:openclosed}). Observe that, as a topological
    space, $R'$ is homeomorphic to a disjoint union of $J\times J$
    copies of $G\x U$, and the restriction of $s \: R' \to V$ and $t
    \: R' \to V$ to each of these copies factors through some $U_j
    \subset V$ via a map that is isomorphic to the projection $G\x U
    \to U$. In particular, $s \: R' \to V$ (and also $t$) has the
    following two properties: 1) For every $W \subseteq R'$, $s(W)$
    is open in $V$ and the restriction $s|_{W}\: W \to s(W)$ admits
    local sections; 2) If $W$ is also
    closed, then $s(W)$ is a disjoint union of copies of $U$ in
    $V=\coprod U_j$.

    Now, observe that, since $[R\sst{}V]$ is an open-closed
    subgroupoid of $[R'\sst{}V]$, properties $(1)$ and $(2)$
    mentioned above also hold for $s,t \: R \to V$. An immediate
    consequence is that, for each $U_j \subseteq V=\coprod U_j$, the
    orbit $\OO(U_j)$ is a disjoint union of copies of $U$ in $V$; in
    particular $\OO(U_j) \subseteq V$ is open-closed. Therefore, by
    Lemma \ref{L:opensubstack}, $[U_j/R|_{U_j}]$ is an open-closed
    substack of $\V$.

    Let us analyze what  $[U_j/R|_{U_j}]$ looks like. Recall that
    $R|_{U_j}=(U_j\x U_j)\x_{V\x V}R$. Equivalently,
    $R|_{U_j}=s^{-1}(U_j)\cap t^{-1}(U_j)$. Note that
    $s^{-1}(U_j)\cong G\x U$. Hence, since $U$ is connected,
    $s^{-1}(U_j)\cap t^{-1}(U_j)$, being an open-closed subspace of
    $s^{-1}(U_j)\cong G\x U$, is of the form $H\x U$, where
    $H\subseteq G$ is an open-closed subspace. It is easy to see
    that $H$ is in fact a subgroup of $G$, so that the groupoid
    $[U_j/R|_{U_j}]$ is  simply (isomorphic to)
    the action groupoid of $H$ acting on $U$ via $G$.

    So, we have shown that $\V$ is a disjoint union of open-closed
    substacks of the form $[U_j/R|_{U_j}]$  each of which is
    equivalent to $[U/H]$ for some open-closed subgroup $H \subseteq
    G$. (Note that different $j$'s may correspond to the same
    substack $[U_j/R|_{U_j}]$.) The proof is complete.
 \end{proof}

\section{Relation between various Galois theories}{\label{S:Relation}}

In this section, we take a closer look at the Galois categories
introduced in $\S$\ref{SS:Galois1}-\ref{SS:Galois4} and study the
relations between them. The most important class for us is
$\mathsf{Free}$. In the next section, we interpret these results in
terms of fundamental groups of stacks and coarse moduli spaces.

\subsection{Relation between $\mathsf{Free}_{\X}$ and
$\mathsf{Full}_{\Xm}$}{\label{SS:FreeFull}}

  \begin{lem}{\label{L:invariance}}
     Let $\X$ be a topological stack, and let $\pim \: \X \to \Xm$
     be its moduli map. Let $Y$ be a topological space. Let $Y \to
     \Xm$ be a local homeomorphism, and set $\Y:=Y\x_{\Xm}\Xm$.
     Then, $Y$ is the coarse moduli space of $\Y$. That is, the
     induced map $g \: \Ym \to Y$ is a  homeomorphism.
   \end{lem}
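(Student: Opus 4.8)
First I would read the fiber product in the statement as $\Y := Y\x_{\Xm}\X$ (the local homeomorphism $Y\to\Xm$ being base-changed against the moduli map $\pim\:\X\to\Xm$), so that $\Y\to Y$ is the base change of $\pim$. Applying the universal property of the coarse moduli space to the morphism $\Y\to Y$ produces a canonical continuous map $g\:\Ym\to Y$ through which $\Y\to Y$ factors, and the goal is to show $g$ is a homeomorphism. The plan is to prove separately that $g$ is a local homeomorphism and that $g$ is bijective; a bijective local homeomorphism is a homeomorphism.

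For the local part I would use the hypothesis that $Y\to\Xm$ is a local homeomorphism to choose an open cover $Y=\bigcup_i Y_i$ with each $Y_i$ mapping homeomorphically onto an open subset $U_i\subseteq\Xm$. Since base change along an open immersion is an open immersion, $\pim^{-1}(U_i):=\X\x_{\Xm}U_i$ is an open substack of $\X$, and likewise $\Y\x_Y Y_i$ is an open substack of $\Y$; moreover the cancellation $\Y\x_Y Y_i=(Y\x_{\Xm}\X)\x_Y Y_i\cong U_i\x_{\Xm}\X=\pim^{-1}(U_i)$ identifies the restriction of $\Y$ over $Y_i$ with this open substack of $\X$. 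Now I would invoke the compatibility of the coarse moduli construction with open substacks: for an open substack $\V$ of a topological stack, $\V_{mod}$ is the corresponding open subset of the ambient coarse moduli space (this follows from $\pim$ being an open map, and is recorded in \cite{homotopy}). Applied to $\pim^{-1}(U_i)\subseteq\X$ this gives $(\pim^{-1}(U_i))_{mod}=U_i$; applied to $\Y\x_Y Y_i\subseteq\Y$ it gives that the $(\Y\x_Y Y_i)_{mod}$ are open in $\Ym$ and cover it. Chasing the universal properties then shows that $g$ restricted to $(\Y\x_Y Y_i)_{mod}=U_i$ is the given homeomorphism $U_i=Y_i\hookrightarrow Y$, so $g$ is a local homeomorphism.

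For bijectivity I would use that $\pim$ identifies $2$-isomorphism classes of points of $\X$ with points of $\Xm$. A point of $\Y=Y\x_{\Xm}\X$ lying over $\bar x\in\Xm$ is the data of a point of $Y$ over $\bar x$ together with a point of $\X$ over $\bar x$ (the structural $2$-morphism being unique, since $Y$ and $\Xm$ are spaces), and two such points are $2$-isomorphic exactly when the $Y$-components coincide and the $\X$-components are $2$-isomorphic. Since there is exactly one $2$-isomorphism class of point of $\X$ over each $\bar x$, the $2$-isomorphism classes of points of $\Y$ -- i.e.\ the points of $\Ym$ -- are in bijection with the points of $Y$ via $g$. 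Combined with the previous paragraph, $g$ is a homeomorphism.

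I expect the middle step to be the only real obstacle: pinning down that $\Y\x_Y Y_i$ really is the open substack $\pim^{-1}(U_i)$ of $\X$, and that forming the coarse moduli space commutes with these open restrictions on both $\X$ and $\Y$ -- in other words, that the coarse moduli functor is compatible with the local-homeomorphism base change. If one prefers to avoid citing this compatibility, an alternative is to fix a groupoid presentation $\X=[X_0/X_1]$ with source map a local Serre fibration, observe that $\Y$ is then presented by $[\,Y\x_{\Xm}X_1\sst{}Y\x_{\Xm}X_0\,]$, and reduce the statement to the elementary fact that $Y\x_{\Xm}X_0\to Y$ is an open surjection inducing a homeomorphism $(Y\x_{\Xm}X_0)/(Y\x_{\Xm}X_1)\risom Y$, which in turn rests only on $X_0\to\Xm$ being an open map.
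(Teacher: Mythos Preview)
Your proposal is correct and follows essentially the same route as the paper: use the local-homeomorphism hypothesis to cover $Y$ by opens mapping homeomorphically into $\Xm$, then reduce to the case of an open embedding $U\hookrightarrow\Xm$, where $(\pim^{-1}(U))_{mod}=U$ makes the claim immediate. The paper compresses all of this into two lines (``the statement is local on $Y$\ldots\ in which case the lemma is obvious''), whereas you spell out the identification $\Y\x_Y Y_i\cong\pim^{-1}(U_i)$ and the compatibility of the coarse-moduli construction with open substacks; your separate bijectivity check is harmless but redundant once that locality is granted, since it already gives $g^{-1}(Y_i)=(\Y\x_Y Y_i)_{mod}\cong Y_i$ on an open cover.
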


 \begin{proof}
   By the very definition of the coarse moduli space, the statement
   is local on $Y$. That is, it is enough to prove the statement
   after replacing $Y$ by an open covering. So, we may assume that
   $Y \to \Xm$ is a disjoint union of open embedding, in which case
   the slemma is obvious.
 \end{proof}

 \begin{lem}{\label{L:free1}}
   Let $\X$ be a topological stack, and let $p \: \X \to A$ be an
   arbitrary map to a topological space $A$. Let $g \: B \to A$ be a
   covering map of topological spaces. Then, the induced map $f \:
   \X \x_A B \to \X$ is free ($\S$\ref{SS:Galois4}).
 \end{lem}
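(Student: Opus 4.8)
The plan is to unwind what ``free'' means: by definition ($\S$\ref{SS:Galois4}) a covering stack $\Y\to\X$ is free exactly when it is a covering map, it is FPR, and the induced map $\Ym\to\Xm$ is again a covering map. So there are three things to verify for $f\:\Y:=\X\x_A B\to\X$, and I would treat them in turn.

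First, that $f$ is a covering map of topological stacks. Since $g$ is a covering of spaces it is representable, and representability is stable under base change, so $f$ is representable; and for any space $W$ with a map $W\to\X$, composing with $p$ gives a map $W\to A$ together with a canonical identification $W\x_{\X}\Y\cong W\x_A B$ over $W$, which is the pullback of $g$ and hence a covering map of spaces. By Definition \ref{D:covering} this makes $f$ a covering map. This step is routine.

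Second, that $f$ is FPR. Here the only input is that $A$ and $B$ are honest topological spaces and so carry no inertia. Concretely, a point $y$ of $\Y$ lying over $x=f(y)$ in $\X$ is the data of a point $b\in B$ together with a (necessarily unique, since $A$ is a space) $2$-isomorphism $p(x)\cong g(b)$; since $f$ is representable $I_y\hookrightarrow I_x$, and conversely any $\al\in I_x$ has image $p(\al)$ lying in $I_{p(x)}=\{1\}$, so the pair $(\al,\id_b)$ is a well-defined automorphism of $y$ restricting to $\al$. Hence $I_y\risom I_x$. (If one prefers, pick a presentation $[X_1\sst{}X_0]$ of $\X$, present $\Y$ by $[X_1\x_A B\sst{}X_0\x_A B]$, and note that the stabilizer subgroup of a point of the atlas is literally unchanged.)

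Third, that $\Ym\to\Xm$ is a covering map --- this is where Lemma \ref{L:invariance} does the work. By the universal property of the coarse moduli space, $p$ factors as $\X\st{\pim}\Xm\st{\bar p}A$ for a unique continuous $\bar p$. Set $Y:=\Xm\x_A B$; then $Y\to\Xm$ is the pullback of the covering map $g$, hence itself a covering map of spaces, in particular a local homeomorphism, and $Y\x_{\Xm}\X\cong\X\x_A B=\Y$ because $p=\bar p\circ\pim$. Lemma \ref{L:invariance} then says $Y$ is the coarse moduli space of $\Y$, compatibly with the maps down to $\Xm$, so $\Ym\to\Xm$ is identified with the covering map $Y\to\Xm$. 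Combining the three steps shows that $f$ is free. The only point requiring any care is choosing the factorization of $p$ through $\Xm$ so that Lemma \ref{L:invariance} applies; there is no hard analytic content here, and the FPR condition is essentially automatic because the target of $p$ is a space.
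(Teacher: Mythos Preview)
Your proof is correct and follows essentially the same approach as the paper: the paper also splits the argument into verifying FPR and then showing $f_{mod}$ is a covering by factoring $p$ through $\Xm$, forming $Y':=\Xm\x_A B$, and invoking Lemma \ref{L:invariance}. The only cosmetic difference is that the paper phrases the FPR step via a groupoid presentation and relative stabilizer groups (your parenthetical alternative), whereas your primary argument computes inertia directly from the fiber-product description of points; both are immediate once one notes that $A$ and $B$, being spaces, have trivial inertia.
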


 \begin{proof}
   Denote $\X \x_A B$ by $\Y$. We have to show that $f$ is FPR and
   that the induced map $f_{mod} \: \Ym \to \Xm$ is a covering map.

   \vspace{0.1in}
    \noindent {\em Proof of FPR}.
     Choose an atlas $p \: X \to \X$, and let $q \: Y \to \Y$ be the
     pull-back atlas for $\Y$. Denote the corresponding groupoids by
     $[R_X\sst{}X]$ and $[R_Y\sst{}Y]$. Let $S_X \to X$ and $S_Y \to
     Y$ be the relative stabilizer  groups of these groupoids (that
     is, $S_X=X\x_{X\x_A X}R$, where $X\to X\x_A X$ is the
     diagonal). Observe that, for every point $x \in X$, the fiber
     $S_x$ of $R \to X\x_A X$ over the point $(x,x)$ is naturally
     isomorphic to the inertia group $I_{p(x)}$. The result now
     follows from the standard fact that the  diagram
         $$\xymatrix@=12pt@M=10pt{
           S_Y \ar[r]\ar[d]  & S_X  \ar[d]  \\
             Y \ar[r]        &   X    }$$
     is cartesian.

   \vspace{0.1in}
    \noindent {\em Proof that $f_{mod}$ is a covering map}.
     The map $p$ factors through the moduli map $\pim \: \X \to
     \Xm$. Set $Y':=B\x_A\Xm$. Then $Y' \to \Xm$ is a covering map.
     Using the fact that the diagram
       $$\xymatrix@=12pt@M=10pt{
          \Y \ar[r]\ar[d]  &  \X \ar[d]^{\pim}  \\
                        Y' \ar[r]        &  \Xm     }$$
     is cartesian, together with the fact that taking coarse moduli
     space commutes with base extension along covering maps (Lemma
     \ref{L:invariance}), we see  that $Y'$ is naturally homeomorphic
     to $\Ym$, and the map $Y' \to \Xm$ is naturally identified with
     $f_{mod}$.
 \end{proof}

 \begin{lem}{\label{L:free2}}
   Let $f \: \Y \to \X$ be a free covering stack. Then, the
   following diagram is cartesian:
    $$\xymatrix@=12pt@M=10pt{
           \Y \ar[r]^f \ar[d]  &  \X \ar[d]  \\
                 \Ym   \ar[r]_{f_{mod}}  &   \Xm  }$$
 \end{lem}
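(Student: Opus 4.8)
The plan is to verify the cartesianness of the square locally on an atlas, exploiting the fact that being cartesian is a local condition. Since $f \: \Y \to \X$ is a covering stack (in particular representable), I can pull everything back along an atlas $p \: X \to \X$ and reduce to a statement about topological spaces and covering maps, using that both "FPR" and "the induced map on moduli spaces is a covering" behave well under such base change. The key will be to produce a natural map from $\Y$ to the fiber product $\Xm\x_{\X}$--wait, the relevant fiber product is $\X\x_{\Xm}\Ym$--and then check it is an equivalence by passing to atlases.

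\begin{proof}
Write $\Z:=\X\x_{\Xm}\Ym$ for the fiber product; we must show the canonical morphism $\Y \to \Z$ (induced by $f$ and the moduli map $\Y \to \Ym$) is an equivalence. This is a local question on any atlas of $\X$, so choose an atlas $p \: X \to \X$ and let $q \: Y \to \Y$ be the pull-back atlas, with corresponding groupoids $[R_X\sst{}X]$ and $[R_Y\sst{}Y]$; here $Y = X\x_{\X}\Y$ and $R_Y = R_X\x_{\X\x\X}(\Y\x_{\X}\Y)$, and $Y \to X$, $R_Y \to R_X$ are covering maps since $f$ is a covering. By Lemma \ref{L:free1}'s argument (or directly), since $f$ is FPR the diagram of relative stabilizers $S_Y \to S_X$ over $Y \to X$ is cartesian, which is exactly the statement that $f$ induces an isomorphism on all inertia groups. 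Now apply the moduli construction: $X_{mod} = X$ and $Y_{mod}$ is the coarse quotient $Y/R_Y$, and since $f$ is free the induced map $f_{mod} \: \Ym \to \Xm$ is a covering, compatible via Lemma \ref{L:invariance} with the identification $\Ym \cong \Xm\x_{(X_{mod})}Y_{mod}$ after pulling back to $X_{mod}=X$.

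The heart of the matter is then the following: on the atlas level, $Y$ is a $G$-equivariant-type cover of $X$ along $X \to \Xm$ through which the covering structure factors, precisely because $f$ FPR means no inertia is being ``created or destroyed,'' so the only covering behavior of $Y\to X$ comes from below, i.e.\ from $Y_{mod}\to X_{mod}$. Concretely, I would show $Y \cong X\x_{X_{mod}}Y_{mod}$ as spaces over $X$, and similarly $R_Y\cong R_X\x_{R_{X,mod}}R_{Y,mod}$, compatibly with source and target, by comparing fibers: a point of $Y$ over $x\in X$ corresponds to a point of $\Y_{p(x)}$, which by FPR maps bijectively to $\Y_{mod,\bar x} = (Y_{mod})_{\bar x}$ (Lemma \ref{L:FPR} in the finite case, and the same fiberwise bijection in general since the inertia map is an isomorphism), and this bijection is a homeomorphism on total spaces because both $Y\to X$ and $Y_{mod}\to X_{mod}$ are coverings with the moduli map $X\to X_{mod}$ intertwining them. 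Assembling these groupoid-level cartesian squares gives an equivalence of quotient stacks $\Y=[Y/R_Y]\cong [ (X\x_{\Xm}Y_{mod}) / (R_X\x_{\Xm}R_{Y,mod}) ] = \X\x_{\Xm}\Ym = \Z$, as desired.

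The step I expect to be the main obstacle is verifying that the fiberwise bijection $Y \to X\x_{X_{mod}}Y_{mod}$ is actually a homeomorphism, not just a continuous bijection, and that it is genuinely $R_X$-compatible so it descends to stacks. The FPR hypothesis is what rules out the pathology where two distinct points of $X$ lying over the same moduli point (i.e.\ in the same orbit) have inertia groups that ``absorb'' part of the covering; once one knows $Y\to X$ restricted to a single $I_x$-chart is of the form $U\times(\text{discrete fiber})$ with the discrete fiber pulled back from $U_{mod}$ (which is essentially the local structure established in Proposition \ref{P:strongcovering} when $\X$ is strongly locally path-connected, and can be extracted more cheaply in general from Definition \ref{D:covering} plus FPR), the homeomorphism assertion and the groupoid compatibility become formal. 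So the real content is a careful local analysis around a point $x$ with inertia $I_x$, showing the covering $Y\to X$ near $x$ is induced from the covering $Y_{mod}\to X_{mod}$ near $\bar x$ precisely because $f$ is FPR.
\end{proof}
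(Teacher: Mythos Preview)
Your approach is substantially more laborious than the paper's, and the part you flag as ``the main obstacle'' is precisely where the paper's argument sidesteps all difficulty. The paper simply observes that $\Y':=\X\times_{\Xm}\Ym$ is itself a covering stack of $\X$ (this is Lemma~\ref{L:free1}, using that $f_{mod}$ is a covering map), so that the comparison map $\Y\to\Y'$ is a morphism \emph{of covering stacks over $\X$}. Such a morphism is an isomorphism as soon as it induces a bijection on fibers over a single point, and the fiberwise bijection is immediate from FPR. No atlas bookkeeping, no local homeomorphism check, no appeal to Proposition~\ref{P:strongcovering} is needed.

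Your route, by contrast, tries to establish the cartesianness at the level of presenting groupoids, and then you are stuck showing that the continuous bijection $Y\to X\times_{\Xm}\Ym$ is a homeomorphism. You propose to resolve this via the local structure in Proposition~\ref{P:strongcovering}, but that proposition assumes $\X$ is strongly locally path-connected, which is \emph{not} a hypothesis of Lemma~\ref{L:free2}; invoking it here would be circular or would weaken the lemma. Your parenthetical ``can be extracted more cheaply in general from Definition~\ref{D:covering} plus FPR'' is exactly the missing step, and you do not carry it out. (Also, your notation slips: you write $X_{mod}=X$ and $Y_{mod}=Y/R_Y$, but $Y$ is a space so $Y_{mod}=Y$; what you mean is $\Ym$, the quotient $Y/R_Y$.)

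The point you are missing is that once both sides are known to be covering stacks of the same base, the category of such is rigid enough that ``bijection on fibers $\Rightarrow$ isomorphism'' holds automatically---this is the covering-space analogue of the fact that an \'etale map which is bijective on geometric points is an isomorphism. Recasting your argument in those terms would collapse it to the paper's three-line proof.
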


 \begin{proof}
    Set $\Y':=\Ym\x_{\Xm}\X$. Then $\Y' \to \X$ is a covering stack
    of $\X$, and there is a natural map $\Y \to \Y'$ of covering
    stacks over $\X$. This maps induces a bijection on the
    fibers, so it is an isomorphism.
 \end{proof}

Using the above three lemmas, the following proposition is immediate.

 \begin{prop}{\label{P:fundgpmoduli}}
  Let $\X$ be a topological stack. Then, there is an equivalence of
  categories
      $$\xymatrix@C=60pt{ \mathsf{Free}_{\X}
                     \ar @/^/[r]^{\txt\tiny{\bf{coarse moduli}}}  &
           \mathsf{Full}_{\Xm}
                  \ar @/^/ [l]^{\txt\tiny{\bf{base extension}\\
                                             {\bf via} $\pim$}}}.$$
  The similar statement is true for  connected covering stacks.
  Finally, the statement remains valid if we add the adjective
  `pointed'.
 \end{prop}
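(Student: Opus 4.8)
The plan is to assemble the equivalence from the three lemmas that precede the proposition, checking that the two functors are well-defined on the relevant subcategories and are mutually quasi-inverse. First I would observe that base extension via $\pim$ does land in $\mathsf{Free}_{\X}$: given a covering map $B \to \Xm$ of topological spaces, Lemma \ref{L:free1} (applied with $A = \Xm$ and $p = \pim$) says exactly that $\X\x_{\Xm}B \to \X$ is free. So the functor $\mathsf{Full}_{\Xm} \to \mathsf{Free}_{\X}$, $B \mapsto \X\x_{\Xm}B$, is defined. Conversely, the coarse-moduli functor $\mathsf{Free}_{\X} \to \mathsf{Full}_{\Xm}$, $\Y \mapsto \Ym$, is defined because the very definition of $\mathsf{Free}$ requires $f_{mod}\: \Ym \to \Xm$ to be a covering map.

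Next I would check the two round trips. Starting from $\Y \in \mathsf{Free}_{\X}$, forming $\Ym$ and then base-extending gives $\Ym\x_{\Xm}\X$, and Lemma \ref{L:free2} says precisely that the natural map $\Y \to \Ym\x_{\Xm}\X$ is an isomorphism; this is the natural isomorphism $\id_{\mathsf{Free}_{\X}} \cong (\text{base ext})\circ(\text{coarse moduli})$. Starting from a covering $B \to \Xm$ in $\mathsf{Full}_{\Xm}$, base-extending to $\Y := \X\x_{\Xm}B$ and then taking coarse moduli gives $\Ym$; here Lemma \ref{L:invariance} (with $Y = B$, noting a covering map is in particular a local homeomorphism) tells us $\Ym \to B$ is a homeomorphism, naturally in $B$. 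These two natural isomorphisms exhibit the desired equivalence.

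For the remaining assertions: the statement for connected covering stacks follows because both functors preserve connectedness — base extension along a covering preserves connected components pointwise by Lemma \ref{L:free2}'s cartesian square combined with the fiber description, and the coarse-moduli functor preserves connectedness since $\pi\: \X \to \Xm$ induces a bijection on connected components (indeed on $2$-isomorphism classes of points); alternatively one restricts the already-established equivalence to connected objects on both sides and checks it remains essentially surjective. The pointed version is obtained by carrying a chosen point $x \in \X$ (equivalently its image $\bar x \in \Xm$) through all the constructions: the cartesian squares of Lemmas \ref{L:free1} and \ref{L:free2} and the homeomorphism of Lemma \ref{L:invariance} are all compatible with the choice of a point in the fiber, so the natural isomorphisms above are isomorphisms of pointed objects.

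The main obstacle is not any single hard step but making sure the bookkeeping of "well-defined on the right subcategory" is airtight — in particular that base extension via $\pim$ of a covering of $\Xm$ is genuinely \emph{free} and not merely FPR. That is exactly the content of Lemma \ref{L:free1}, whose proof already did the real work (the cartesian square of relative stabilizers for FPR, and commutation of coarse moduli with covering-base-extension, i.e.\ Lemma \ref{L:invariance}, for the moduli map being a covering). Given those lemmas, the proposition is a formal consequence, so the write-up is short: cite Lemma \ref{L:free1} for well-definedness of one functor, the defining property of $\mathsf{Free}$ for the other, Lemma \ref{L:free2} for one unit/counit, and Lemma \ref{L:invariance} for the other.
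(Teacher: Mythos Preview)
Your proposal is correct and matches the paper's own approach exactly: the paper simply states that the proposition is immediate from the three preceding lemmas (Lemmas \ref{L:invariance}, \ref{L:free1}, \ref{L:free2}), and you have spelled out precisely how each lemma is used---\ref{L:free1} for well-definedness of base extension, the definition of $\mathsf{Free}$ for the other functor, \ref{L:free2} for one composite, and \ref{L:invariance} for the other. Your additional remarks on the connected and pointed variants are also sound and in the spirit of the paper's terse claim.
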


\subsection{Description of $\mathsf{FPR}$}{\label{SS:FPR}}

The next proposition leads to a satisfactory description of the
Galois category $\mathsf{FPR}$. Recall from (\cite{homotopy},
$\S$17) that, for every $x \in \X$, there is a natural group
homomorphism $\ox \: I_x \to \pX$.

 \begin{lem}{\label{L:FPRcovers}}
    Let $(\X,x)$ be a pointed connected topological stack, and let
    $f \: (\Y,y) \to (\X,x)$ be a pointed connected covering map.
    Then, we have the following:
  \begin{itemize}

    \item[$\mathbf{i.}$] $f$ is FPR at $y$ if and only if the
     corresponding subgroup $H \subseteq \pX$ contains $\ox(I_x)$;

    \item[$\mathbf{ii.}$] For every point $x'$, and every path
     $\gamma$ in $\X$ from $x'$ to $x$, identify $\omega_{x'}(I_{x'})
     \subseteq \pi_1(\X,x')$ with a subgroup of $\pX$ via the
     isomorphism $\gamma_* \: \pi_1(\X,x') \to \pX$. Let $N$ be the
     (necessarily normal) subgroup of $\pX$  generated by all these
     groups. Then, $f$ is FPR if and only if the corresponding
     subgroup $H \subseteq \pX$ contains $N$.
  \end{itemize}
 \end{lem}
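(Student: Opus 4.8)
The plan is to reduce both statements to the standard dictionary between pointed connected covering stacks of $(\X,x)$ and subgroups of $\pX$, combined with the interpretation of the maps $\ox$ as monodromies. Throughout, let $H\subseteq\pX$ be the subgroup corresponding to $(\Y,y)$ (the image of $f_*\colon\pY\to\pX$), so that the fiber $\Y_x$, viewed as a $\pX$-set, is identified with $\pX/H$, the base point $y$ corresponding to the trivial coset.

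For part $(\mathbf{i})$, the key point is that $\Y_x$ carries, besides the monodromy action of $\pX$, a tautological action of $I_x$: an element $\alpha\in I_x$ sends a point $(y',\phi\colon f(y')\risom x)$ of $\Y_x$ to $(y',\alpha\circ\phi)$. These two actions are compatible via $\ox$, i.e.\ $\alpha$ acts exactly as $\ox(\alpha)$ does. This is precisely what the construction of $\ox$ in (\cite{homotopy}, $\S$17) delivers: $\ox(\alpha)$ is the ghost loop obtained from the constant path at $x$ by regluing its two ends via $\alpha$, and lifting such a loop to $\Y$ leaves the underlying point untouched while only twisting the chosen identification with $x$. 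Granting this, I would compute the $I_x$-stabilizer of $y\in\Y_x$. A point $(y',\alpha\circ\phi)$ coincides with $(y',\phi)$ in $\Y_x$ precisely when there is $\psi\in I_{y'}$ with $\phi\circ f(\psi)=\alpha\circ\phi$; hence, conjugating by $\phi$, the stabilizer of $y$ is exactly the image of the map $I_y\to I_x$, which is injective because $f$ is representable. Therefore $f$ is FPR at $y$ $\iff$ $I_y\to I_x$ is surjective $\iff$ $I_x$ fixes $y\in\Y_x$ $\iff$ $\ox(I_x)$ fixes the trivial coset of $\pX/H$ $\iff$ $\ox(I_x)\subseteq H$.

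For part $(\mathbf{ii})$, first note that $\Y$, being a covering stack of the connected and locally path-connected $\X$, is itself connected and locally path-connected, hence path-connected. Given any point $y'$ of $\Y$, put $x':=f(y')$, choose a path $\delta$ in $\Y$ from $y'$ to $y$, and set $\gamma:=f\circ\delta$, a path in $\X$ from $x'$ to $x$. Naturality of $f_*$ under change of base point along $\delta$ and $\gamma$ gives $\gamma_*\circ f_*=f_*\circ\delta_*$, hence $\gamma_*(H')=H$, where $H'\subseteq\pi_1(\X,x')$ is the subgroup corresponding to $(\Y,y')$. Applying part $(\mathbf{i})$ with base point $x'$ then yields: $f$ is FPR at $y'$ $\iff$ $\omega_{x'}(I_{x'})\subseteq H'$ $\iff$ $\gamma_*\omega_{x'}(I_{x'})\subseteq H$. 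Conversely, every path $\gamma$ from an arbitrary point $x'$ to $x$ arises this way, by lifting $\gamma$ (equivalently, its reverse, starting at $y$) to a path in $\Y$ ending at $y$. Consequently $f$ is FPR $\iff$ $\gamma_*\omega_{x'}(I_{x'})\subseteq H$ for every $x'$ and every such path $\gamma$, which is exactly the condition $N\subseteq H$, since $N$ is by definition generated by precisely these subgroups. Finally, normality of $N$ falls out of the same bookkeeping: replacing $\gamma$ by another path from $x'$ to $x$ replaces $\gamma_*\omega_{x'}(I_{x'})$ by a $\pX$-conjugate of itself, so the generating family of subgroups is stable under conjugation by all of $\pX$.

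I expect the only genuine work to lie in making the compatibility used in part $(\mathbf{i})$ airtight — unwinding the construction of $\ox$ from (\cite{homotopy}, $\S$17) and verifying that its monodromy on each fiber $\Y_x$ is the tautological $I_x$-action. Once that identification is secured, both parts are formal consequences of the covering-stack dictionary and path lifting (\cite{homotopy}, $\S$18).
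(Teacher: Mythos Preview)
Your proof is correct and follows essentially the same approach as the paper. The only noteworthy difference is in part~$(\mathbf{i})$: the paper dispatches it in one line by citing Lemma~18.16 of \cite{homotopy}, which asserts that the square
\[
\xymatrix@=12pt@M=10pt{
  I_y \ar[r]^(0.38){\oy} \ar[d]_{f_*} & \pY \ar[d]^{f_*} \\
  I_x \ar[r]_(0.38){\ox} & \pX
}
\]
is cartesian, whence $f_*\colon I_y\to I_x$ is onto iff $\ox(I_x)\subseteq H$. What you do---identifying the tautological $I_x$-action on the fiber with the monodromy via $\ox$ and then computing stabilizers---is exactly the content of that cartesian square, unpacked by hand. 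Your part~$(\mathbf{ii})$ is the same path-lifting argument as the paper's, stated with more detail.
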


 \begin{proof}[Proof of Part $(\mathbf{i})$]
  By Lemma 18.16 of \cite{homotopy}, we have a cartesian diagram
      $$\xymatrix@=12pt@M=10pt{
        I_y \ar[r]^(0.38){\oy} \ar[d]_{f_*}  &  \pY \ar[d]^{f_*}  \\
          I_x   \ar[r] _(0.38){\ox}       &   \pX    }$$
  So, the map $f_* \: I_y \to I_x$ (which is already injective
  because $f$ is representable) is an isomorphism if and only if the
  image of $f_* \: \pY \to \pX$, namely $H$, contains $I_x$.

 \vspace{0.1in}
  \noindent{\em Proof of Part}  ($\mathbf{ii}$). Lift $\gamma$ to a
  path in  $\Y$ ending at $y$, and call the starting point $y'$. Then
  $f$ is FPR at $y'$ if and only if $H$ contains
  $\gamma_*(\omega_{x'}(I_{x'}))$.
 \end{proof}

Let us rephrase part ($\mathbf{ii}$) of the above lemma as a proposition.

 \begin{prop}{\label{P:FPRcovers}}
  The open subgroups of the $\mathsf{FPR}$ topology on $\pi_1(\X,x)$ are
  precisely the open subgroups of $\pi_1(\X,x)$ in the full topology
  which contain $N$.
 \end{prop}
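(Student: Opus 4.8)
The plan is to deduce Proposition \ref{P:FPRcovers} directly from Lemma \ref{L:FPRcovers}(ii) together with the general machinery of $\S$\ref{SS:Fundamental} describing $\HH_{\mathsf{C}}$-topologies. First I would recall that, by the discussion preceding Lemma \ref{L:G2}, the open subgroups of the $\mathsf{FPR}$ topology on $\pi_1(\X,x)$ are by definition the subgroups $H\subseteq\pX$ arising as images of $f_*\:\pY\to\pX$ for $f\:(\Y,y)\to(\X,x)$ a connected FPR covering stack, together with all subgroups containing such an $H$ (this last closure comes from axiom $\mathbf{C1}$ and Corollary \ref{C:Top3}, which together furnish axiom $\mathbf{Top3}$). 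So it suffices to show: a subgroup $H\subseteq\pX$ which is open in the full topology is the image of some connected FPR covering stack if and only if $H\supseteq N$, where $N$ is the normal subgroup of Lemma \ref{L:FPRcovers}(ii).

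For the forward direction, suppose $H$ corresponds to a connected covering stack $(\Y,y)\to(\X,x)$ that is FPR (at every point). By Lemma \ref{L:FPRcovers}(ii), FPR-ness of $f$ is equivalent to $H\supseteq N$, so this direction is immediate. For the converse, suppose $H$ is open in the full topology and $H\supseteq N$. Since $H$ is open in the full topology, it corresponds to a connected covering stack $(\Y,y)\to(\X,x)$ (this is exactly the content of the correspondence recalled in $\S$\ref{SS:Fundamental}, using that $\mathsf{Full}$ is a Galois category with fundamental group $\pi_1'(\X,x)$ and that open subgroups of $\pX$ in the full topology are the subgroups of the form $\pY$). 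Again by Lemma \ref{L:FPRcovers}(ii), the inclusion $H\supseteq N$ forces $f$ to be FPR. Hence $H$ is open in the $\mathsf{FPR}$ topology. Combining the two directions, the open subgroups of the $\mathsf{FPR}$ topology are precisely the subgroups open in the full topology which contain $N$, which is the assertion.

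I would then note the one point that needs a sentence of care: a priori the open subgroups of the $\mathsf{FPR}$ topology include not only the images $H=\pY$ of connected FPR covering stacks but also all larger subgroups (by $\mathbf{Top3}$). But if $H'\supseteq H\supseteq N$, then $H'$ is itself open in the full topology (being a superset of an open subgroup in a prodiscrete topology) and contains $N$, so it too is the image of a connected FPR covering stack by the converse argument above; thus no new subgroups are introduced and the two descriptions genuinely coincide. Symmetrically, the set of subgroups open in the full topology and containing $N$ is already closed under passing to larger subgroups, since enlarging keeps both properties. So the identification is clean.

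The main obstacle here is essentially bookkeeping rather than mathematics: Lemma \ref{L:FPRcovers}(ii) does all the real work, and the only thing to verify is that the closure condition $\mathbf{Top3}$ imposed on $\HH_{\mathsf{FPR}}$ does not enlarge the family beyond ``open in the full topology and containing $N$.'' That follows because the latter family is manifestly upward-closed and is contained in the full topology, so the $\mathsf{FPR}$ topology it generates has exactly those sets as its open subgroups. I expect no genuine difficulty beyond stating this carefully.
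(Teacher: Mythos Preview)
Your proposal is correct and matches the paper's approach exactly: the paper states this proposition simply as a rephrasing of Lemma~\ref{L:FPRcovers}(ii), with no separate proof given. Your additional bookkeeping about $\mathbf{Top3}$ is sound but more explicit than the paper bothers to be.
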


\subsection{Relation between $\mathsf{Free}$ and  $\mathsf{FPR}$}{\label{SS:FreeFPR}}

  The subcategory $\mathsf{Free} \subseteq \mathsf{FPR}$ is not as easy
  to describe in general. But we have the following results.

  \begin{prop}{\label{P:strongFPR}}
     If $\X$ is a strongly locally path-connected topological stack,
     then every FPR covering stack is automatically free. That is,
     $\mathsf{Free}=\mathsf{FPR}$.
  \end{prop}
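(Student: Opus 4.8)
The plan is to show directly that if $f\colon \Y\to\X$ is an FPR covering stack of a strongly locally path-connected stack $\X$, then the induced map $f_{mod}\colon\Ym\to\Xm$ is a covering map of topological spaces; together with the FPR hypothesis this is exactly the definition of $\mathsf{Free}$. The key technical input is Proposition \ref{P:strongcovering}, which gives, around each point $x\in\X$, an open substack $\U\cong[U/I_x]$ (with $U$ locally path-connected, $I_x$-invariant, fixing the lift of $x$, and having slice property at $x$) such that $p^{-1}(\U)\cong\coprod_{k\in K}[U/H_k]$, where each $H_k\subseteq I_x$ is an open-closed subgroup acting on $U$ via $I_x$. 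The strategy is to compute the coarse moduli spaces of both sides of this local picture and check that $f_{mod}$ is trivialized over $\U_{mod}$.

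First I would record the elementary fact that for a stack of the form $[U/H]$ with $H$ a topological group, the coarse moduli space is the coarse orbit space $U/H$, and that formation of this coarse space is functorial in the pair (group, space). So on the one hand $\U_{mod}=U/I_x$, and on the other hand $(p^{-1}(\U))_{mod}=\coprod_{k\in K} U/H_k$. Now comes the point where the slice property and the FPR hypothesis are both used. The FPR condition says that for each $k$ the inertia groups of $[U/H_k]$ map isomorphically onto those of $[U/I_x]$ under $f$; applied at the lift of $x$, whose inertia in $[U/I_x]$ is all of $I_x$ and whose inertia in $[U/H_k]$ is $H_k$ (since $x$ is fixed), this forces $H_k=I_x$ for every $k$. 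Hence $p^{-1}(\U)\cong\coprod_{k\in K}[U/I_x]$, i.e.\ $p^{-1}(\U)\to\U$ is (isomorphic to) the trivial covering stack $\U\times K\to\U$ with fibre $K$. Taking coarse moduli and using functoriality, $f_{mod}^{-1}(\U_{mod})\cong\coprod_{k\in K} U/I_x=\U_{mod}\times K$, so $f_{mod}$ is trivialized over the open set $\U_{mod}\subseteq\Xm$.

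It remains to see that the sets $\U_{mod}$ obtained this way cover $\Xm$ and are open. Openness of $\U_{mod}$ in $\Xm$ follows because $\U\to\X$ is an open embedding of stacks and the moduli map $\X\to\Xm$ is an open map on the level of the induced map of coarse spaces (equivalently, $\U_{mod}$ is the image of the orbit of $U$ under the atlas of $\X$, which is open); and as $x$ ranges over $\X$ the points $\bar x\in\Xm$ range over all of $\Xm$ by the bijection between $2$-isomorphism classes of points of $\X$ and points of $\Xm$. This exhibits $f_{mod}$ as locally trivial with discrete fibres, hence a covering map, so $f\in\mathsf{Free}$. Combined with the trivial inclusion $\mathsf{Free}\subseteq\mathsf{FPR}$, we get $\mathsf{Free}=\mathsf{FPR}$.

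The main obstacle I anticipate is not the inference $H_k=I_x$ (that is a clean consequence of FPR plus $x$ being a fixed point of the $I_x$-action on $U$), but rather the bookkeeping needed to make ``coarse moduli commutes with this local decomposition and with restriction to the open substack $\U$'' fully rigorous — i.e.\ identifying $f_{mod}^{-1}(\U_{mod})$ with $(p^{-1}(\U))_{mod}$ and checking $\U_{mod}$ is genuinely open in $\Xm$. This is the same circle of ideas as Lemma \ref{L:invariance}, and I would expect to lean on the universal property of the coarse moduli space and the fact that everything in sight is local on an atlas; the slice property is what guarantees the local models $[U/I_x]$ exist with $U$ well-behaved, so no further point-set pathology intervenes.
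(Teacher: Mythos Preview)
Your proposal is correct and is precisely the natural unpacking of the paper's one-line proof, which simply reads ``This follows immediately from Proposition \ref{P:strongcovering}.'' Your key observation---that applying FPR at the fixed point $x\in U$ forces $H_k=I_x$ for every $k$, so that the local decomposition of $p^{-1}(\U)$ becomes a disjoint union of copies of $\U$---is exactly the content hidden behind the word ``immediately,'' and the bookkeeping you flag (that $\U_{mod}$ is open in $\Xm$ and that $(p^{-1}(\U))_{mod}=f_{mod}^{-1}(\U_{mod})$) is routine once one knows coarse moduli commutes with open substacks.
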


  \begin{proof}
     This follows immediately from Proposition \ref{P:strongcovering}.
   \end{proof}

When $\X$ is not strongly locally path-connected, we could still say
something.

 \begin{prop}{\label{P:properFPR}}
     Let $[R\sst{}X]$ be a topological groupoid, and let
     $\X=[X/R]$ be its quotient stack. Assume either of the following
     holds:
      \begin{itemize}
          \item[$\mathbf{i.}$] $X$ is metrizable and  $s,t\: R\to X$
            are closed maps;

          \item[$\mathbf{ii.}$] $X$ is Hausdorff and  $s,t\: R\to X$ are
             proper.
      \end{itemize}
     Then, every finite covering stack of $\X$ that is FPR is
     automatically free. That is,
     $\mathsf{FPR}\cap \mathsf{Fin}  =\mathsf{Free}\cap \mathsf{Fin}$.
 \end{prop}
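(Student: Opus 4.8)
The plan is to reduce the statement to a purely local question about a single covering map of topological \emph{spaces}, using the groupoid presentation $[R\sst{}X]$. Let $f\: \Y \to \X$ be a finite FPR covering stack. By Lemma \ref{L:free1} and Lemma \ref{L:free2}, proving that $f$ is free is equivalent to proving that the induced map $f_{mod}\: \Ym \to \Xm$ is a covering map of topological spaces; equivalently, that $\Ym$ is obtained from $\Xm$ by a construction that is locally a disjoint union of open embeddings. So I would first pull everything down to atlases: pick the atlas $X \to \X$, let $Y = \Y\x_{\X}X$ with its pull-back atlas $Y \to \Y$, and form the groupoids $[R\sst{}X]$ and $[R_Y\sst{}Y]$ with $R_Y = Y\x_{\Y}Y$. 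Since $f$ is a finite covering, $Y \to X$ is a finite covering of spaces, and $R_Y \to R$ is its base extension along $s$ (equivalently $t$), hence also a finite covering. The coarse moduli spaces $\Xm$ and $\Ym$ are the coarse quotients $X/R$ and $Y/R_Y$ respectively.

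The heart of the argument is then local near a point $\bar{x}\in\Xm$. Let $x \in X$ be a lift. The key input is the FPR hypothesis combined with the counting Lemma \ref{L:FPR}: FPR at every point of the fiber $\Y_x$ means precisely that $\#(Y_{mod,\bar x}) = \#(\Y_x) = \#(Y_x)$, i.e. no two points of the finite fiber $Y_x$ get identified when passing to the coarse quotient. I want to promote this fiberwise ``no collapsing'' statement to a neighborhood statement: there is a saturated (i.e. $R$-invariant) open $U\ni x$ in $X$ over which $Y \to X$ trivializes as $\coprod_{i=1}^n U_i$, $U_i = U$, such that the preimages of the corresponding open $\bar U\subseteq\Xm$ in $\Ym$ are $n$ disjoint open copies of $\bar U$. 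This is where hypotheses $(\mathbf{i})$ and $(\mathbf{ii})$ enter: closedness (resp. properness) of $s,t\: R\to X$ together with metrizability (resp. Hausdorffness) of $X$ is exactly what is needed to separate the $n$ sheets in a \emph{saturated} way — one uses that the orbit map is closed/proper to find, around the finitely many $R$-orbits hitting $Y_x$, saturated open neighborhoods whose images in the coarse quotient remain disjoint. Concretely: the bad locus where two sheets of $Y$ collide over $\Xm$ is the image in $X$ of a suitable closed subset of $R_Y$ (built from the diagonal complement), and the closedness/properness of the structure maps makes this image closed and disjoint from $x$, so its complement is a saturated open neighborhood of $x$ doing the job.

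Once such a saturated $U$ is produced, the conclusion is formal: by Lemma \ref{L:opensubstack} (whose hypothesis on $s$ is met here since $s$ is a covering map, hence admits local sections and is open) the substacks $[U_i/R|_{U_i}]$ are open substacks of $\Y$, their moduli spaces are open in $\Ym$, they are pairwise disjoint, and each maps homeomorphically onto $\bar U\subseteq\Xm$. Hence $f_{mod}^{-1}(\bar U) \cong \coprod_{i=1}^n \bar U$, so $f_{mod}$ is a covering map near $\bar x$; as $\bar x$ was arbitrary, $f_{mod}$ is a covering map, and by Lemma \ref{L:free2} the square relating $\Y,\X,\Ym,\Xm$ is cartesian, i.e. $f$ is free.

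The main obstacle I anticipate is exactly the middle step: upgrading the pointwise FPR/counting condition to a \emph{saturated} open neighborhood over which no collapsing occurs. Pointwise non-collapsing over the fiber does not by itself give an open set of non-collapsing — this can fail for arbitrary groupoids, which is why the two alternative hypotheses are imposed. The role of metrizability with closed $s,t$ (resp. Hausdorff with proper $s,t$) is to control the limiting behavior of orbits approaching $x$: one must rule out a sequence of points near $x$ whose $R$-orbits pull two distinct sheets of $Y$ together in the limit. I expect the cleanest way to handle this is to work on $R$ and $R_Y$ directly — consider the closed subset $D \subseteq R_Y$ complementary to the ``diagonal-like'' part, push it forward to $X$ via the (closed or proper) source map, check the resulting set is closed and misses $x$, and take $U$ to be the $R$-saturation of a small enough open ball (metrizable case) or a small enough neighborhood separated using properness (Hausdorff case). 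Getting the saturation to stay inside the non-collapsing region is the delicate point and will require using the hypotheses a second time to see that saturating a small neighborhood does not re-introduce collisions.
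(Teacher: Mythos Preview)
Your overall strategy matches the paper's: pull back to atlases, use Lemma~\ref{L:FPR} to see that no two points of the finite fiber $Y_x$ collapse in $\Ym$, then upgrade this to a saturated open trivialization using the closedness/properness hypotheses, and conclude that $f_{mod}$ is a covering map. The paper's execution of the middle step is, however, more concrete and somewhat different from the ``bad locus'' picture you sketch. Rather than pushing forward a collision set from $R_Y$, the paper works directly with the orbits: the orbits $B_i:=\OO(y_i)\subseteq Y$ of the fiber points are \emph{closed} (first use of the hypotheses), and the fiberwise-bijection statement from Lemma~\ref{L:FPR} translates into $g|_{B_i}\: B_i \to A:=\OO(x)$ being an injective closed map, hence a homeomorphism for each $i$. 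Thus $g\: Y\to X$ already trivializes over the closed set $A$. One then separates the finitely many disjoint closed sets $B_i$ by disjoint opens $V_i$ (this is where metrizability of $Y$, via Lemma~\ref{L:metrizable}, or Hausdorffness plus properness, is used), shrinks so each $V_i$ maps homeomorphically onto a common open $U\supseteq A$, and finally saturates by the explicit formula $U\mapsto U\smallsetminus s\big(R_X\smallsetminus t^{-1}(U)\big)$ (second use of closedness of $s,t$), which keeps $A\subseteq U$ and $B_i\subseteq V_i$.

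Your ``complement of the diagonal-like part of $R_Y$'' idea could presumably be made to work, but as stated it is vague about which subset of $R_Y$ you mean and why its image in $X$ is closed and misses $x$; the paper's orbit-first argument sidesteps this by reducing to the elementary fact that a finite cover trivial over a closed set extends to a trivialization over a neighborhood. The saturation step you flagged as delicate is handled in the paper by the one-line formula above, so no second separation argument is needed.
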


 \begin{proof}
  We may assume that $\X$ is connected. Let $f\: \Y \to \X$ be a
  connected finite FPR covering stack of degree $n$. We have to show
  that $f_{mod} \: \Ym \to \Xm$ is also a covering map.

  Consider the atlas $p\: X \to \X$, and let $q \:Y \to \Y$ be the
  pull-back atlas for $\Y$. Let $[R_Y\sst{}Y]$ be the corresponding
  groupoid. To have consistent notation, we denote $R$ by $R_X$. The
  map $g\: Y \to X$ is a covering map of degree $n$. Recall
  (\cite{homotopy}, Example 4.13) that  $\Xm$ is homeomorphic to the
  coarse quotient of $X$ by the equivalence relation induced from
  $R_X$ (and similarly for $\Ym$). We have a commutative diagram
     $$\xymatrix@=12pt@M=10pt{
            Y \ar[r]^(0.4)h \ar[d]_g &  \ar[d]^{f_{mod}} \Ym \\
                              X  \ar[r]        &   \Xm        }$$
  By Lemma \ref{L:FPR}, both vertical maps have constant degree $n$.
  Therefore, $h$ is a fiberwise bijection. Take a point $\bar{x}
  \in \Xm$, and pick a lift $x \in X$ for it. Let
  $\bar{y}_1,\cdots,\bar{y}_n \in \Ym$ be the elements of the fiber of
  $f_{mod}$ over $\bar{x}$. Similarly, let $y_1,\cdots,y_n \in Y$ be
  the elements of the fiber of $g$ over $x$. Consider the orbit
  $B_i:=\OO(y_i) \subseteq Y$ of $y_i$ under the action of the
  groupoid $[R_Y\sst{}Y]$ (this is simply the fiber of $h \: Y \to
  \Ym$ over $y_i$). By hypothesis, $B_i$ is  closed. Since $h$ is a
  fiberwise bijection, the restriction $g|_{B_i} \: B_i \to X$ is
  injective for every $i$. It is also closed, because $g$ is a finite
  cover. Therefore, $g|_{B_i} \: B_i \to A$ is a homeomorphism for
  every $i$, where $A=g(B_1)=\cdots=g(B_n)=\OO(x)$. In other words,
  the covering map $g \: Y \to X$ trivializes over $A \subseteq X$. We
  claim that there exists an open $A\subseteq U$ such that $g$
  trivializes over $U$ as well. By condition ($\mathbf{i}$) or
  ($\mathbf{ii}$), we can find open sets $B_i \subseteq V_i$ such that
  $V_i\cap V_j=\emptyset$, for every $i\neq j$   (see Lemma
  \ref{L:metrizable} below). By shrinking each $V_i$, we may assume
  that $g|_{V_i} \: V_i \to U$ is a homeomorphism for every $i$. It is
  easy to check that $U:=\cap g(V_i)$ has the desired property.

  The next claim is that, after some more shrinking, we may assume
  that $U$ and $V_i$, $i=1,\cdots,n$, are invariant open sets for
  the corresponding groupoids. To do so, note that the source and
  target maps of the groupoids $[R_X\sst{}X]$ and $[R_Y\sst{}Y]$ are
  both closed maps. This follows from the hypothesis and the fact
  that the diagram
    $$\xymatrix@=12pt@M=10pt{
          R_Y \ar[r]^s \ar[d] &  Y\ar[d]^g  \\
           R_X   \ar[r]_s        &   X        }$$
  is cartesian. So, we may replace $U$ by
  $U-s\big(R_X-t^{-1}(U)\big)$, and similarly, each $V_i$ by
  $V_i-s\big(R_Y-t^{-1}(V_i)\big)$. Note that we still have
  $A\subseteq U$, $B_i \subseteq V_i$, and each $V_i$ maps
  homeomorphically to $U$. Hence, after passing to the coarse moduli
  spaces, we obtain an  open neighborhood of $\bar{U}$ of $\bar{x}$
  over which $f_{mod}$ trivializes as an $n$-sheeted covering (the
  sheets being $\bar{V}_1,\cdots,\bar{V}_n$).  The proof is complete.
 \end{proof}

  \begin{lem}{\label{L:metrizable}}
    Let $f \: Y \to X$ be a finite covering map of topological
    spaces. If $X$ is metrizable, then so is $Y$
  \end{lem}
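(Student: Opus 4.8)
The plan is to prove that a finite covering space $Y$ of a metrizable space $X$ is metrizable. First I would recall that metrizability is not generally preserved under arbitrary covering maps, but it is preserved under \emph{finite} covers, and the cleanest route is via the Urysohn-type metrization machinery: a regular space with a $\sigma$-locally-finite basis is metrizable (Bing--Nagata--Smirnov). Alternatively, since $X$ is metrizable, it is in particular paracompact and locally metrizable covers are often handled by the Smirnov metrization theorem (a paracompact, locally metrizable Hausdorff space is metrizable). So the strategy splits into two easy pieces: (1) show $Y$ is locally metrizable; (2) show $Y$ is paracompact (equivalently, Hausdorff and admits the relevant covering properties), then invoke Smirnov.

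For step (1): a covering map is a local homeomorphism, so every point of $Y$ has an open neighborhood homeomorphic to an open subset of $X$; open subsets of metrizable spaces are metrizable, hence $Y$ is locally metrizable. This step is entirely formal and uses nothing about finiteness. For step (2), the finiteness of the cover enters: I would argue that $Y$ is Hausdorff (two points in the same fiber are separated because the cover is, locally, a disjoint union of finitely many sheets; two points in different fibers pull back a separation from $X$) and paracompact. For paracompactness one can either cite that a finite-to-one closed continuous image/preimage relationship transports paracompactness, or more concretely: given an open cover of $Y$, push forward via the (finite, hence closed and proper) map $f$, refine downstairs using paracompactness of the metrizable space $X$ to a locally finite open refinement by evenly-covered sets, and pull back; since each evenly-covered set has exactly $n$ sheets, the pulled-back family is again locally finite. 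Then Smirnov's theorem gives metrizability of $Y$.

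I expect the main obstacle to be making the paracompactness argument airtight without getting bogged down in point-set bookkeeping — in particular, verifying that the pullback of a locally finite family of evenly-covered open sets is locally finite in $Y$, which requires knowing that each point of $Y$ has a neighborhood meeting only finitely many members, using that fibers have constant finite cardinality $n$. An alternative that sidesteps paracompactness entirely is to build an explicit metric: choose a metric $d$ on $X$, and for $y_1, y_2 \in Y$ set $\rho(y_1,y_2)$ to be $d(f(y_1),f(y_2))$ plus a correction term that detects when $y_1,y_2$ lie on different sheets over nearby points; one must check the triangle inequality and that $\rho$ induces the covering topology. This is more hands-on but also more delicate to verify, so I would favor the Smirnov route and simply remark that the argument is standard point-set topology, consistent with the paper's terse ``Easy.''-style proofs of analogous lemmas.

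\begin{proof}
  Being a covering map, $f$ is a local homeomorphism, so every point of $Y$ has an open neighborhood homeomorphic to an open subset of $X$; since open subsets of metrizable spaces are metrizable, $Y$ is locally metrizable. It is also Hausdorff: distinct points in a common fiber are separated inside an evenly-covered neighborhood (which is a disjoint union of finitely many open sheets), and distinct points in distinct fibers are separated by pulling back a separation of their images in $X$. Finally, $Y$ is paracompact: given an open cover $\{W_\lambda\}$ of $Y$, cover $X$ by evenly-covered open sets and refine this, using paracompactness of the metrizable space $X$, to a locally finite open refinement $\{U_\mu\}$ consisting of evenly-covered sets each of whose sheets is contained in some $W_\lambda$; the collection of all sheets of all the $U_\mu$ is then an open refinement of $\{W_\lambda\}$, and it is locally finite because each fiber of $f$ has the same finite cardinality, so a point $y\in Y$ has a neighborhood (namely one sheet of a $U_\mu$ containing $f(y)$) meeting only finitely many of these sheets. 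Thus $Y$ is a paracompact, Hausdorff, locally metrizable space, hence metrizable by Smirnov's metrization theorem.
\end{proof}
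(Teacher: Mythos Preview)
Your proof is correct and follows exactly the same route as the paper's: invoke Smirnov's metrization theorem and check that Hausdorffness, paracompactness, and local metrizability pass to finite covering spaces. The paper merely asserts these three properties are ``easily seen to be stable under passing to finite covering spaces,'' whereas you spell out the verifications; one tiny quibble is that in your local-finiteness check the neighborhood should be a sheet over an evenly-covered open set meeting only finitely many $U_\mu$ (guaranteed by local finiteness downstairs), rather than a sheet of some $U_\mu$ itself, but this is a cosmetic fix.
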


   \begin{proof}
    This follows from Smirnov Metrization Theorem which says that a
    topological space $X$ is metrizable if and only if it is
    paracompact, Hausdorff, and locally metrizable. All these
    properties are easily seen to be stable
     under passing to finite covering spaces.
   \end{proof}

\subsection{Description of $\mathsf{Full}$ and $\mathsf{Fin}$ when
$\X$ is semilocally 1-connected}{\label{SS:semilocally}}

When $\X$ is semilocally 1-connected things are as nice as they can
be, because the (pointed) covering stacks of $\X$ are in a bijection
with subgroups of $\pi_1(\X,x)$; see \cite{homotopy}, $\S$18.2.

 \begin{prop}{\label{P:semilocally}}
  Suppose that $\X$ is a connected, locally path-connected, semilocally
  1-connected topological stack. Then, $\mathsf{Full}$ corresponds to
  the discrete topology on $\pi_1(\X,x)$ and $\mathsf{Fin}$
  corresponds to the profinite topology. We have $\pi'_1(\X,x)=\pX$,
  and $\pi_1(\X,x)^{\wedge}_{\mathsf{Fin}}=\widehat{\pi_1(\X,x)}$,
  the profinite completion of $\pi_1(\X,x)$.
 \end{prop}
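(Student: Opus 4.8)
The plan is to reduce everything to the universal-cover theory already available in \cite{homotopy}, $\S$18.2. The key input is that, for a connected, locally path-connected, semilocally 1-connected topological stack $\X$, the universal covering stack $\widetilde{\X} \to \X$ exists, and the usual Galois correspondence holds: pointed connected covering stacks $(\Y,y)$ of $(\X,x)$ are in bijection with subgroups $H \subseteq \pX$, via $H = \operatorname{im}(f_* \: \pY \to \pX)$, with $\Y$ recovered as $\widetilde{\X}/H$. Granting this, the strategy has three parts: identify the open subgroups in the $\mathsf{Full}$ topology; identify those in the $\mathsf{Fin}$ topology; then feed each into the completion machinery of $\S$\ref{SS:pro}.

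First I would handle $\mathsf{Full}$. By definition $\HH_{\mathsf{Full}}$ consists of all subgroups arising as images of $\pY \to \pX$ for connected covering stacks $\Y$. But since the universal cover $\widetilde{\X}$ exists and is itself a covering stack, \emph{every} subgroup $H \subseteq \pX$ occurs — realize it as $\widetilde{\X}/H \to \X$, which is a connected covering stack with associated subgroup exactly $H$ (one should check $\widetilde{\X}/H$ is again a covering stack in the sense of Definition \ref{D:covering}, but this is part of the $\S$18.2 package, using local path-connectedness to see the quotient map is a covering). Hence $\HH_{\mathsf{Full}}$ is the set of \emph{all} subgroups of $\pX$, which is precisely the discrete topology. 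Because the discrete topology is an $\HH$-topology with basis $\{1\}$ consisting of a normal subgroup, Proposition \ref{P:discrete} gives $\pX^{\wedge}_{\mathsf{Full}} = \pX$, i.e.\ $\pi'_1(\X,x) = \pX$.

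Next, $\mathsf{Fin}$ is the subcategory of covering stacks whose connected components have finite degree over $\X$; for a connected covering stack corresponding to $H$, the degree is $[\pX : H]$. So $\HH_{\mathsf{Fin}}$ is exactly the collection of finite-index subgroups of $\pX$ — this is the profinite topology on the discrete group $\pX$. It visibly has a basis of normal subgroups (intersect the finitely many conjugates of a finite-index subgroup). Therefore Proposition \ref{P:classicalcompletion} applies and gives $\pX^{\wedge}_{\mathsf{Fin}} \cong \varprojlim_{\N} \pX/N$ over normal finite-index $N$, which is by definition $\widehat{\pi_1(\X,x)}$.

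The main obstacle is not group theory but the geometric input: one must be sure that in the semilocally 1-connected case the covering-stack Galois correspondence of $\S$18.2 genuinely produces, for each subgroup $H \subseteq \pX$, a covering stack (not merely a stack with the right $\pi_1$), and that its degree equals $[\pX:H]$ and its fiber over $x$ is $\pX/H$ as a $\pX$-set. Once that is cited from \cite{homotopy}, the identifications of the two topologies are immediate and the completion statements follow formally from Propositions \ref{P:discrete} and \ref{P:classicalcompletion}. I would therefore open the proof by invoking $\S$18.2 for the correspondence, then dispatch $\mathsf{Full}$ and $\mathsf{Fin}$ in turn as above.
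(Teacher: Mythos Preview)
Your proposal is correct and follows essentially the same route as the paper's proof, only written out in more detail: the paper's proof is two lines, invoking Proposition \ref{P:discrete} for the $\mathsf{Full}$ part (having already noted in $\S$\ref{SS:Galois1} that the full topology is discrete under the semilocally 1-connected hypothesis, via \cite{homotopy}, $\S$18) and declaring the $\mathsf{Fin}$ statement ``obvious.'' Your explicit appeal to the Galois correspondence of \cite{homotopy}, $\S$18.2 and to Proposition \ref{P:classicalcompletion} simply unpacks what the paper leaves implicit.
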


 \begin{proof}
  The statement about $\mathsf{Full}$ follows from Proposition
  \ref{P:discrete}. The second statement is obvious.
 \end{proof}

\section{Fundamental group of the coarse moduli space}{\label{S:Moduli}}

In this section, we translate the results of the previous section in
terms of the fundamental groups. The outcome is some formulas for
the fundamental group of the coarse moduli space of a topological
stack.

\vspace{0.1in}

\noindent {\bf Notation.} Throughout this section, the group $N
\subseteq \pX$ refers to the group defined in Lemma
\ref{L:FPRcovers}.

 \begin{prop}{\label{P:fundgpcoarsemod}}
    Let $\X$ be a connected locally path-connected topological stack.
    Then, we have the following:
    \begin{itemize}
      \item[$\mathbf{i.}$] The image of the natural map $\pim'_{*}\:
       \pi'_1(\X,x) \to \pi'_1(\Xm,x)$ is not contained in any proper
       open subgroup of $\pi'_1(\Xm,x)$. In particular, if the full
       topology on $\pXm$  has a basis of open neighborhoods
       (equivalently, every covering space of $\Xm$ can be dominated
       by a Galois covering), then $\pim'_{*}$ has a dense image.

      \item[$\mathbf{ii.}$] There is a natural isomorphism
         $$\pi_1(\X,x)^{\wedge}_{\mathsf{Free}} \risom \pi'_1(\Xm,x).$$

      \item[$\mathbf{iii.}$] If $\X$ is  strongly locally
       path-connected, then we have a natural
       isomorphism $$(\pi_1(\X,x)/N)' \risom \pi'_1(\Xm,x).$$ (The
       left hand side is the completion with respect to the quotient
       topology induced on $\pi_1(\X,x)/N$ from the full topology of
       $\pi_1(\X,x)$.)

      \item[$\mathbf{iv.}$] If $\X$ is as in Proposition
       \ref{P:properFPR}, then we have a natural isomorphism
         $$(\pi_1(\X,x)/N)^{\wedge}_{\mathsf{Fin}}
                      \risom{\pi_1(\Xm,x)}^{\wedge}_{\mathsf{Fin}}.$$
   \end{itemize}
 \end{prop}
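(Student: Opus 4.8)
The plan is to deduce all four statements from the comparison results of Section \ref{S:Relation}, translating — via Theorem \ref{T:Galois} — between equivalences of Galois categories and isomorphisms of the associated complete prodiscrete groups (the automorphism groups of the fiber functors). The engine is Proposition \ref{P:fundgpmoduli}: pullback along $\pim\:\X\to\Xm$ and passage to coarse moduli are mutually inverse equivalences $\mathsf{Free}_{\X}\simeq\mathsf{Full}_{\Xm}$, and in the pointed version the fiber functor $\Y\mapsto\Y_x$ on the left corresponds to the fiber functor $Z\mapsto Z_{\pim(x)}$ on the right (indeed the fiber of $\X\x_{\Xm}Z$ over $x$ \emph{is} the fiber of $Z$ over $\pim(x)$). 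Taking $\Aut$ of these fiber functors gives part (ii) at once: $\pi_1(\X,x)^{\wedge}_{\mathsf{Free}}\risom\pi_1'(\Xm,x)$, naturally.

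For part (i): since the $\mathsf{Free}$-topology on $\pi_1(\X,x)$ is coarser than the full topology, the canonical map $\pi_1(\X,x)\to\pi_1(\X,x)^{\wedge}_{\mathsf{Free}}$ factors through $\pi_1'(\X,x)=\pi_1(\X,x)^{\wedge}_{\mathsf{Full}}$, and under the identification of (ii) the second arrow is $\pim'_{*}$. By Remark \ref{R:dense} there is no proper open subgroup of $\pi_1(\X,x)^{\wedge}_{\mathsf{Free}}$ containing the image of $\pi_1(\X,x)$, hence none containing the larger image of $\pim'_{*}$. If moreover the full topology on $\pi_1(\Xm,x)$ has a basis of normal open subgroups, then $\overline{\operatorname{im}\pim'_{*}}$ is a closed subgroup contained in no proper open subgroup; were it proper, choosing $g\notin\overline{\operatorname{im}\pim'_{*}}$ and a basic normal open subgroup $V$ with $gV\cap\overline{\operatorname{im}\pim'_{*}}=\emptyset$ would make $\overline{\operatorname{im}\pim'_{*}}\cdot V$ a proper open subgroup containing the image — a contradiction. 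Hence $\pim'_{*}$ has dense image.

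For parts (iii) and (iv) I would just identify the relevant topology on $\pi_1(\X,x)$ with a topology on the quotient $\pi_1(\X,x)/N$. When $\X$ is strongly locally path-connected, Proposition \ref{P:strongFPR} gives $\mathsf{Free}_{\X}=\mathsf{FPR}_{\X}$, and Proposition \ref{P:FPRcovers} says the open subgroups of the $\mathsf{FPR}$-topology are precisely the full-open subgroups containing $N$; since completing a prodiscrete group for the topology whose open subgroups are those containing a fixed normal subgroup $N$ is the same as completing the quotient by $N$ for the induced topology (compare the categories of continuous sets, as in Example \ref{E:prodiscrete2}.$\mathbf{1}$), part (ii) yields $(\pi_1(\X,x)/N)'\risom\pi_1'(\Xm,x)$, which is (iii). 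For (iv), intersect everything with $\mathsf{Fin}$: pullback and coarse moduli preserve the degree of each connected component (for the backward direction use Lemma \ref{L:FPR}, which gives $\#\Y_{mod,x}=\#\Y_x$ precisely because free covers are FPR), so Proposition \ref{P:fundgpmoduli} restricts to an equivalence $\mathsf{Free}_{\X}\cap\mathsf{Fin}_{\X}\simeq\mathsf{Full}_{\Xm}\cap\mathsf{Fin}_{\Xm}=\mathsf{Fin}_{\Xm}$ respecting fiber functors; under the hypotheses of Proposition \ref{P:properFPR} we have $\mathsf{FPR}_{\X}\cap\mathsf{Fin}_{\X}=\mathsf{Free}_{\X}\cap\mathsf{Fin}_{\X}$, whence $\mathsf{FPR}_{\X}\cap\mathsf{Fin}_{\X}\simeq\mathsf{Fin}_{\Xm}$. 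By Proposition \ref{P:FPRcovers} the open subgroups of the $\mathsf{FPR}\cap\mathsf{Fin}$-topology on $\pi_1(\X,x)$ are exactly the finite-index full-open subgroups containing $N$, i.e.\ the $\mathsf{Fin}$-type open subgroups of $\pi_1(\X,x)/N$; as these have a basis of normal subgroups, the corresponding completion is $\liminv_{M}\pi_1(\X,x)/M$ over finite-index full-open $M\supseteq N$, which is $(\pi_1(\X,x)/N)^{\wedge}_{\mathsf{Fin}}$. Applying $\Aut$ to the two fiber functors then gives the natural isomorphism $(\pi_1(\X,x)/N)^{\wedge}_{\mathsf{Fin}}\risom\pi_1(\Xm,x)^{\wedge}_{\mathsf{Fin}}$.

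The substantive inputs — Propositions \ref{P:fundgpmoduli}, \ref{P:strongFPR}, \ref{P:properFPR}, \ref{P:FPRcovers} — are already in hand, so the proof is essentially bookkeeping; the point that needs the most care is the degree argument in (iv): checking that ``each connected component of finite degree'' transports correctly in both directions, and that $\mathsf{FPR}\cap\mathsf{Fin}$ on $\pi_1(\X,x)$ really is the profinite-type topology of $\pi_1(\X,x)/N$. A related subtlety to keep in mind in (i)--(iii) is that the $\mathsf{Free}$-, $\mathsf{FPR}$-, and quotient topologies appearing there need not have bases of normal subgroups, so one must argue with categories of continuous $G$-sets rather than with naive inverse limits; it is only the extra $\mathsf{Fin}$-hypothesis of (iv) that restores a normal basis and hence an honest inverse-limit description.
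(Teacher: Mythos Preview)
Your proof is correct and, for parts (ii)--(iv), follows the same route as the paper: invoke Proposition~\ref{P:fundgpmoduli} for (ii), combine it with Propositions~\ref{P:strongFPR} and~\ref{P:FPRcovers} (and Example~\ref{E:prodiscrete2}.$\mathbf{1}$) for (iii), and with Proposition~\ref{P:properFPR} for (iv). Your write-up is considerably more detailed than the paper's---which dispatches (ii)--(iv) in one sentence each---but the substance is the same, and your extra care in (iv) (checking that the equivalence of Proposition~\ref{P:fundgpmoduli} restricts to $\mathsf{Fin}$ on both sides, via the degree count of Lemma~\ref{L:FPR}) makes explicit a step the paper leaves to the reader.

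Part (i) is where you genuinely diverge. The paper argues topologically: the statement amounts to ``pullback along $\pim$ of a connected covering of $\Xm$ stays connected,'' and this is proved directly---if $\Y=\X\times_{\Xm}Y$ split as $\Y_1\coprod\Y_2$, then Lemma~\ref{L:invariance} (coarse moduli commutes with base change along local homeomorphisms) would force $Y\cong\Y_{1,mod}\coprod\Y_{2,mod}$, contradicting connectedness of $Y$. You instead stay inside the Galois--category formalism: factor $\pi_1(\X,x)\to\pi'_1(\Xm,x)$ through $\pi'_1(\X,x)$ using the inclusion $\mathsf{Free}_{\X}\subseteq\mathsf{Full}_{\X}$, identify the second arrow with $\pim'_*$ via (ii), and invoke Remark~\ref{R:dense}. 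Both work; the paper's argument is more self-contained (it does not need (ii) as input and exhibits the geometric reason, namely Lemma~\ref{L:invariance}), while yours keeps everything at the level of prodiscrete groups and makes the logical dependence on (ii) transparent.
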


 \begin{proof}[Proof of Part  $(\mathbf{i})$]
   This is equivalent to saying that the pull-back via $\pim \: \X
   \to \Xm$ of a connected covering space  $Y$ of $\Xm$  remains
   connected. Denote this pull-back by $\Y$. Since $\X$ is locally
   connected, so is $\Y$. If $\Y$ is not connected, we can write it
   as a disjoint union of two open-closed substacks $\Y_1 \coprod
   \Y_2$. But then, by Lemma \ref{L:invariance}, we would have a
   decomposition $\Y_{1,mod} \coprod  \Y_{2,mod}$ of $Y$ into
   open-closed subspaces, which is impossible.

  \vspace{0.1in}

  \noindent{\em Proof of Part}  ($\mathbf{ii}$).
   This follows from Proposition \ref{P:fundgpmoduli}.

  \vspace{0.1in}

  \noindent{\em Proof of Part}  ($\mathbf{iii}$).
   This follows from Part ($\mathbf{ii}$) and Proposition
   \ref{P:FPRcovers} (also see
   Example \ref{E:prodiscrete2}.$\mathbf{1}$).

  \vspace{0.1in}

  \noindent{\em Proof of Part}  ($\mathbf{iv}$).
   This follows from Proposition \ref{P:properFPR}.
 \end{proof}

In general, it is desirable to work with the actual fundamental
group $\pX$ (i.e., the one defined using loops) rather than the
fancy prodiscrete fundamental groups of $\S$\ref{SS:Galois1}. But to
do so one needs to assume that $\X$ is semilocally 1-connected. In
what follows we analyze what happens in the presence of this
condition. Recall from $\S$\ref{SS:semilocally} that in this case
the full topology on $\pX$ is discrete.

 \begin{cor}{\label{C:surjective}}
  Let $\X$ be a connected locally path-connected topological stack.
  Assume $\Xm$ is semilocally 1-connected. Then, the map $\pim_{*}
  \: \pX \to \pXm$ is surjective.
 \end{cor}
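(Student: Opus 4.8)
The plan is to deduce this from Proposition \ref{P:fundgpcoarsemod}. Since $\Xm$ is semilocally $1$-connected, the full topology on $\pi_1(\Xm,x)$ is discrete (Proposition \ref{P:semilocally}), so $\pi'_1(\Xm,x)=\pi_1(\Xm,x)$; in particular $\pi_1(\Xm,x)$ carries a basis of open neighborhoods of the identity (the trivial subgroup is open), so the second half of Proposition \ref{P:fundgpcoarsemod}(\textbf{i}) applies and tells us that the image of $\pim'_{*}\: \pi'_1(\X,x) \to \pi'_1(\Xm,x)=\pi_1(\Xm,x)$ is dense. But in a discrete group ``dense'' means ``everything'': the only dense subset is the whole group, since every singleton is open. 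Hence $\pim'_{*}$ is surjective.

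It remains to connect $\pim'_{*}$ with the map $\pim_{*}\: \pX \to \pXm$ coming from loops. These fit into a commutative square with the natural maps $\pX \to \pi'_1(\X,x)$ and $\pXm \to \pi'_1(\Xm,x)$ (functoriality of the $\mathsf{Full}$-completion applied to $\pim$). On the target side the right-hand vertical map $\pXm \to \pi'_1(\Xm,x)$ is an isomorphism, again because $\Xm$ is semilocally $1$-connected ($\S$\ref{SS:Galois1}). Therefore $\pim_{*}$ is the composite $\pX \to \pi'_1(\X,x) \st{\pim'_{*}} \pi'_1(\Xm,x) \risom \pXm$. The first arrow need not be surjective, but composing a map with something whose image is everything, and then an isomorphism, still lands on everything: more precisely, $\mathrm{im}(\pim_{*}) = (\text{iso})\big(\mathrm{im}(\pim'_{*} \circ (\pX \to \pi'_1(\X,x)))\big)$, and I claim $\pim'_{*}$ already being surjective forces this image to be all of $\pi'_1(\Xm,x)$. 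Here one should be slightly careful: surjectivity of $\pim'_{*}$ alone does not immediately give surjectivity of $\pim'_{*}$ restricted to the (possibly non-dense, non-surjective) image of $\pX$. So the cleaner route is to apply Proposition \ref{P:fundgpcoarsemod}(\textbf{i}) directly to the composite: the statement there is about the pull-back along $\pim$ of a connected covering \emph{space} $Y$ of $\Xm$ remaining connected, and a connected covering space is detected by the action of $\pX$ (via loops) on its fiber, not by $\pi'_1$. So rerun that argument with $\pX$ in place of $\pi'_1(\X,x)$: if $Y \to \Xm$ is a connected cover classified by a subgroup $H \subseteq \pXm$, its pullback $\Y = \X \times_{\Xm} Y$ is a cover of $\X$ classified by $\pim_{*}^{-1}(H) \subseteq \pX$, and $\Y$ is connected by the same Lemma \ref{L:invariance} argument (a disconnection of $\Y$ would descend, via $\Ym \cong Y$, to a disconnection of $Y$). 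Connectedness of $\Y$ says $\pX$ acts transitively on its fiber $\cong \pXm/H$, i.e. $\pim_{*}(\pX)\cdot H = \pXm$ for every $H$. Taking $H$ trivial (legitimate since $\Xm$ is semilocally $1$-connected, so the universal cover exists) gives $\pim_{*}(\pX) = \pXm$.

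The main obstacle is precisely the bookkeeping in the previous paragraph: ensuring that one argues with the honest fundamental group $\pX$ and the honest $\pXm$ rather than their prodiscrete completions, and that the semilocal $1$-connectedness hypothesis is invoked at exactly the point where it is needed, namely to guarantee that the trivial subgroup of $\pXm$ is open (equivalently, that the universal cover of $\Xm$ exists). Everything else — the cartesian square identifying $\Ym$ with $Y$, local connectedness of $\Y$ passing along the atlas — is already packaged in Lemma \ref{L:invariance} and the discussion of $\S$\ref{SS:Fundamental}, so no new computation is required.
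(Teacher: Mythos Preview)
Your final argument is correct and takes a genuinely different route from the paper's. You pull back the universal cover of $\Xm$ (which exists by semilocal $1$-connectedness), observe via Lemma~\ref{L:invariance} that the pullback $\Y$ is connected, and conclude that $\pX$ acts transitively on the fiber $\pXm$, hence $\pim_*$ is surjective. The paper instead lets $H\subseteq\pXm$ be the image of $\pim_*$, notes that $\pX\to H$ is continuous (with $H$ discrete), extends it uniquely to $\pi'_1(\X,x)\to H$ via Corollary~\ref{C:extend}, identifies this extension with $\pim'_*$, and then invokes Proposition~\ref{P:fundgpcoarsemod}(\textbf{i}) plus discreteness of $\pXm$ to force $H=\pXm$. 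Your approach sidesteps the prodiscrete-completion machinery entirely and argues directly with covering stacks, which is arguably cleaner for this particular statement; the paper's route shows how the completion formalism absorbs such arguments once it is in place. You correctly identified the gap in your first paragraph (surjectivity of $\pim'_*$ does not by itself control the image of $\pX$), and your repair is exactly right; you could streamline the write-up by going straight to the universal-cover argument.
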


 \begin{proof}
2     Let $H \subseteq \pXm$ be the image of $\pim_{*} \: \pX \to
     \pXm$. Since  $\pX \to H$ is  continuous, there is a unique
     extension $\pim'_1(\X,x) \to H$, by  Corollary \ref{C:extend}.
     This extension coincides with $\pim'_{*}$ of Proposition
     \ref{P:fundgpcoarsemod}.$\mathbf{i}$. Therefore, $H$ must be
     dense in $\pXm$. But $\pXm$ is discrete by
     Proposition \ref{P:semilocally}. So $H=\pXm$.
 \end{proof}

 \begin{thm}{\label{T:semilocally}}
    Let $(\X,x)$ be a connected topological stack. Assume
    $\X$ and $\Xm$ are semilocally 1-connected.

     \begin{itemize}
       \item[$\mathbf{i.}$] If  $\X$ is strongly locally
        path-connected,  then we have a natural isomorphism
                $$\pi_1(\X,x)/N \risom \pi_1(\Xm,x).$$

       \item[$\mathbf{ii.}$] Assume $\X$ is the quotient stack of a
         topological groupoid $[R\sst{}X]$ in which either $X$ is
         metrizable and the source map $s \: R \to X$ is closed, or
         $X$ is Hausdorff and the source map $s \: R \to X$ is
         proper. Then, we have a natural isomorphism
                $$\widehat{\pi_1(\X,x)/N}
                      \risom\widehat{\pi_1(\Xm,x)},$$
         where \ $\hat{}$\ stands for profinite completion.  (The
         \ $\hat{}$\ on the left hand side is over the whole expression.)
     \end{itemize}
 \end{thm}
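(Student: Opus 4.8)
The plan is to derive Theorem \ref{T:semilocally} as a direct translation of Proposition \ref{P:fundgpcoarsemod} once the semilocal 1-connectedness hypotheses are in force, using the fact that under these hypotheses the relevant prodiscrete topologies collapse to discrete (resp.\ profinite) ones and the prodiscrete completions become trivial (resp.\ profinite completions). First I would record what the hypotheses buy us: since $\X$ is semilocally 1-connected, the full topology on $\pi_1(\X,x)$ is discrete (Proposition \ref{P:semilocally}), so $\pi_1'(\X,x)=\pi_1(\X,x)$; likewise, since $\Xm$ is semilocally 1-connected, the full topology on $\pi_1(\Xm,x)$ is discrete and $\pi_1'(\Xm,x)=\pi_1(\Xm,x)$. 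Moreover, because the full topology on $\pi_1(\X,x)$ is discrete, the quotient topology it induces on $\pi_1(\X,x)/N$ is also discrete, hence $\pi_1(\X,x)/N$ is already complete by Proposition \ref{P:discrete}, so $(\pi_1(\X,x)/N)'=\pi_1(\X,x)/N$.

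For part $(\mathbf{i})$, I would simply invoke Proposition \ref{P:fundgpcoarsemod}$(\mathbf{iii})$, which gives a natural isomorphism $(\pi_1(\X,x)/N)'\risom\pi_1'(\Xm,x)$ for strongly locally path-connected $\X$; the left side is $\pi_1(\X,x)/N$ by the completeness remark above, and the right side is $\pi_1(\Xm,x)$ by semilocal 1-connectedness of $\Xm$. This yields the claimed isomorphism $\pi_1(\X,x)/N\risom\pi_1(\Xm,x)$, and naturality is inherited from that of the isomorphism in Proposition \ref{P:fundgpcoarsemod}. I should double-check that a strongly locally path-connected stack is in particular locally path-connected (immediate from Definition \ref{D:stronglylc} via Definition \ref{D:strong}), so that Proposition \ref{P:fundgpcoarsemod} applies; the semilocal 1-connectedness of $\X$ is what we additionally need here to identify $\pi_1'(\X,x)$ with $\pi_1(\X,x)$ and to ensure the quotient topology on $\pi_1(\X,x)/N$ is discrete.

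For part $(\mathbf{ii})$, I would start from Proposition \ref{P:fundgpcoarsemod}$(\mathbf{iv})$, which under the stated groupoid hypotheses gives $(\pi_1(\X,x)/N)^{\wedge}_{\mathsf{Fin}}\risom\pi_1(\Xm,x)^{\wedge}_{\mathsf{Fin}}$. Now I invoke Proposition \ref{P:semilocally}: since both $\X$ and $\Xm$ are semilocally 1-connected, $\mathsf{Fin}$ corresponds to the honest profinite topology on each of $\pi_1(\X,x)$ and $\pi_1(\Xm,x)$, and $\pi_1(\Xm,x)^{\wedge}_{\mathsf{Fin}}=\widehat{\pi_1(\Xm,x)}$; similarly $(\pi_1(\X,x)/N)^{\wedge}_{\mathsf{Fin}}$ is the profinite completion of the discrete group $\pi_1(\X,x)/N$, i.e.\ $\widehat{\pi_1(\X,x)/N}$. (For $(\mathbf{ii})$ one must also note that such a stack is locally path-connected, which follows from semilocal 1-connectedness plus being a quotient of a groupoid, so that Proposition \ref{P:fundgpcoarsemod} is applicable.) Substituting gives the desired $\widehat{\pi_1(\X,x)/N}\risom\widehat{\pi_1(\Xm,x)}$, again with naturality inherited from the source isomorphism.

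The argument is essentially bookkeeping: the only place where any care is needed — and the step I expect to be the main (minor) obstacle — is verifying that the quotient topology on $\pi_1(\X,x)/N$ induced from the discrete full topology on $\pi_1(\X,x)$ is again discrete (so that $(\pi_1(\X,x)/N)'$ and its $\mathsf{Fin}$-completion coincide with $\pi_1(\X,x)/N$ and its profinite completion, respectively), and that $N$ is the same group in both formulations. The former is immediate since every subgroup of a discrete group is open; the latter is because $N$ was defined intrinsically in Lemma \ref{L:FPRcovers} in terms of the images $\omega_{x'}(I_{x'})$, independent of which Galois category one works in. Once these are noted, both parts follow by assembling Proposition \ref{P:fundgpcoarsemod} with Proposition \ref{P:semilocally}.
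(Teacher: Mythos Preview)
Your proposal is correct and follows exactly the same route as the paper's own proof, which simply cites Proposition~\ref{P:fundgpcoarsemod}($\mathbf{iii}$),($\mathbf{iv}$) together with Proposition~\ref{P:semilocally}; you have merely unpacked the bookkeeping that those citations entail. One small caveat: your parenthetical claim in part~($\mathbf{ii}$) that local path-connectedness ``follows from semilocal 1-connectedness plus being a quotient of a groupoid'' is not a valid deduction---local path-connectedness is a standing hypothesis in the paper's setup (see \S\ref{SS:Fundamental}) rather than something derived here, so you should simply invoke it as such.
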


  \begin{proof}
    Use Proposition \ref{P:fundgpcoarsemod}.$\mathbf{iii}$,$\mathbf{iv}$
    and Proposition \ref{P:semilocally}.
  \end{proof}

\begin{rem}{\label{R:semilocally}}
 It is easy to see (\cite{homotopy}, Lemma 18.4) that, if $\X$ is a
 topological stack which is (locally) path-connected, then so is
 $\Xm$. The similar statement is not true for semilocally
 1-connected stacks in general. But it is true when $\X=[X/G]$, where
 $G$ is a Lie group and $X$ is a Cartan $G$-space. This follows from
 \cite{Palais}, Theorem 2.3.3. In the case where $G$ is a compact
 Lie group this is Corollary 6.4 in Chapter II of \cite{Bredon}.
\end{rem}

 \begin{ex}{\label{E:graphs}}
  Let $\mathcal{G}$ be a graph-of-groups. It is shown in
  \cite{homotopy} that $\mathcal{G}$ can be realized as a
  Deligne-Mumford topological stack. For each vertex $v$ of
  $\mathcal{G}$, let $G_v$ denote the corresponding group. Then, we
  have $G_v=I_v$, and the homomorphism $G_v \to
  \pi_1(\mathcal{G},v)$ defined by Serre coincides with our map
  $\omega_v \: I_v \to \pi_1(\mathcal{G},v)$. The similar thing is
  true for the homomorphisms $G_e \to \pi_1(\mathcal{G},e_0)$, where
  $e_0$ is any point on the edge $e$.

  If we let $N$ be the normal subgroup generated by the images of
  all $G_v$ and $G_e$ in $\pi_1(\mathcal{G})$, we find that the
  fundamental group of the underlying graph of $\mathcal{G}$ is
  isomorphic to $\pi_1(\mathcal{G})/N$. This is of course a
  well-known formula (see \cite{Bass}, Example 2.14). The similar
  result is valid for complexes-of-groups as well, and this
  should presumably be well-known too.
 \end{ex}

 \begin{ex}{\label{E:Hawaiian2}}
  Let $X$ and $Y$ be as in Example \ref{E:Hawaiian}. There is a
  shift action of $\bbZ$ on $X$ and $Y$. Set $\X=[X/\bbZ]$ and
  $\Y=[Y/\bbZ]$. The coarse moduli space of $\Y$ is $Y/\bbZ$, which
  is $S^1$ (with the usual topology). The coarse moduli space of
  $\X$ is $S^{1}$ with a topology that is the usual topology away
  from the base point $\bullet \in S^1$, but the only open
  containing $\bullet $ is the entire space;
  this is a contractible space.

  The fundamental group of $Y$ is $\mathbb{F}^{\bbZ}=<a_i \ |\ i\in
  \bbZ>$. The fundamental group of $X$ contains $\mathbb{F}^{\bbZ}$,
  but it is considerably bigger (it is indeed uncountable). The
  $\bbZ$-actions on $X$ and $Y$ induce  $\bbZ$-actions on $\pi_1(X)$
  and $\pi_1(Y)$. In the case of $Y$, this action is simply given by
  shifting the generators. In the case of $X$, it is also some sort
  of a shift, but it is more complicated to
  describe. We have
     $$\pi_1(\Y)\cong \mathbb{F}^{\bbZ}\rtimes \bbZ, \ \text{and} \
              \pi_1(\X)\cong \pi_1(X)\rtimes \bbZ.$$

  The image of the map $\omega_{\bullet} \: \bbZ=I_{\bullet} \to
  \pi_1(\Y,\bullet)$ is simply the factor $\bbZ$ of the semi-direct
  product (similarly for $\pi_1(\X)$). If we kill this subgroup of
  $\pi_1(\Y)$ we get a group that is isomorphic to
  $\bbZ\cong\pi_1(\Xm)$, as predicted by Theorem
  \ref{T:semilocally}. (In fact, it seems that the same is true for
  $\Y$. That is, if we kill $\bbZ$ in $\pi_1(\X)$ we end up with the
  trivial group. Note that this is not predicted by Theorem
  \ref{T:semilocally}, as $\Y$ is not semilocally 1-connected.)
 \end{ex}

\section{Application to quotients of group actions; the exact sequence}
{\label{S:Quotient1}}

Results of the previous section can be used to compute the
fundamental group of the coarse quotient space of a topological
group action. The idea is that the fundamental group of the quotient
{\em stack} $[X/G]$ of a group action is easy to compute thanks to
the fiber homotopy exact sequence of the fibration $G \to X \to
[X/G]$. Theorem \ref{T:semilocally} then can be used to compute the
fundamental group of $[X/G]_{mod}=X/G$ from the fixed point data of
the action (which manifests itself in the inertia groups of $[X/G]$).

\vspace{0.1in}

\noindent{\bf Conventions.} In this and the next sections, $G$ will
be a topological group acting continuously on a connected, locally
path-connected, semilocally 1-connected topological space $X$. We
assume that the coarse quotient $X/G$ is also semilocally
1-connected; see Remark \ref{R:semilocally}. We denote the group of
path components of $G$ by $\mathbf{G}$. We let $\mathbf{I} \subset
\mathbf{G}$ be the set of all path components of $G$ which contain
an element $g$ such that the action of $g$ on $X$ has a fixed point.

 \begin{thm}{\label{T:fundgpquotient1}}
  Let $G$ and $X$ be as above. Suppose that the action has slice
  property.  Assume that all stabilizer groups $I_x$ of the action are
  locally path-connected. Fix a base point $x$ in $X$ and let
  $\bar{x}$ be its image in $X/G$.  Then, we have an exact sequence
         $$\pi_1(X,x) \to \pi_1(X/G,\bar{x}) \to \mathbf{G/I} \to 1.$$
  In particular, if $X$ is simply-connected, then we have an
  isomorphism
         $$ \pi_1(X/G) \risom \mathbf{G/I}.$$
 \end{thm}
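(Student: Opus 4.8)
The plan is to reduce this to Theorem~\ref{T:semilocally} applied to the quotient stack $\X=[X/G]$, whose coarse moduli space is $\Xm=X/G$. First I would check the hypotheses of Theorem~\ref{T:semilocally}.$\mathbf{i}$: since the action has slice property and all stabilizers $I_x$ are locally path-connected, Lemma~\ref{L:stronglpc} gives that $\X$ is strongly locally path-connected (using that $X$, hence $\X$, is locally path-connected). The space $X$ is semilocally 1-connected by hypothesis, so $\X$ is too (Definition~\ref{D:lpc}), and $\Xm=X/G$ is semilocally 1-connected by our standing convention. Thus Theorem~\ref{T:semilocally}.$\mathbf{i}$ applies and yields a natural isomorphism $\pi_1(\X,x)/N \risom \pi_1(X/G,\bar x)$, where $N\subseteq\pi_1(\X,x)$ is the normal subgroup generated by the images of all the maps $\omega_{x'}\colon I_{x'}\to\pi_1(\X,x)$ (transported along paths to the base point).

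Next I would compute $\pi_1(\X,x)$ and identify $N$ using the fibration $G\to X\to[X/G]=\X$. The fiber homotopy exact sequence of this fibration gives
\[
\pi_1(G,e)\to\pi_1(X,x)\to\pi_1(\X,x)\to\pi_0(G)\to\pi_0(X).
\]
Since $X$ is connected, $\pi_0(X)=*$, so we get a short exact sequence
\[
\pi_1(X,x)\to\pi_1(\X,x)\to\mathbf{G}\to 1,
\]
where $\mathbf{G}=\pi_0(G)$ is the group of path components. Let $P\subseteq\pi_1(\X,x)$ denote the image of $\pi_1(X,x)$; then $\pi_1(\X,x)/P\cong\mathbf{G}$. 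Quotienting the isomorphism of the previous paragraph by the (normal) image of $P$, I get $\pi_1(\X,x)/\langle N,P\rangle \risom \pi_1(X/G,\bar x)/\overline{P}$, and since the composite $\pi_1(X,x)\to\pi_1(\X,x)\to\pi_1(X/G,\bar x)$ is exactly $\pi_1(X,x)\to\pi_1(X/G,\bar x)$ induced by $X\to X/G$, the group $\overline P$ is the image of $\pi_1(X,x)$ in $\pi_1(X/G)$. This already gives exactness of $\pi_1(X,x)\to\pi_1(X/G)\to \pi_1(\X,x)/\langle N,P\rangle\to 1$, so the task reduces to identifying $\pi_1(\X,x)/\langle N,P\rangle$ with $\mathbf{G}/\mathbf{I}$.

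The main obstacle is this last identification: showing that $\langle N,P\rangle/P$ corresponds under $\pi_1(\X,x)/P\cong\mathbf{G}$ exactly to the subgroup generated by $\mathbf{I}$. The key point is to understand, for a point $x'\in X$ with nontrivial stabilizer, where the image $\omega_{x'}(I_{x'})$ lands in $\pi_1(\X,x)/P\cong\mathbf{G}$. The inertia group of the point $x'$ of the stack $[X/G]$ is the stabilizer $I_{x'}=\{g\in G\mid gx'=x'\}$, and I would argue that the composite $I_{x'}\hookrightarrow\pi_1(\X,x')\to\pi_0(G)=\mathbf{G}$ is simply the map sending $g\in I_{x'}$ to its path component $[g]\in\mathbf{G}$; this should follow from the description of $\omega_{x'}$ via the fibration $G\to X\to\X$ (the inertia element $g$ lifts to a path in $G$ from $e$ to $g$, whose class in $\pi_0(G)$ is $[g]$), essentially the content of (\cite{homotopy}, $\S$17) together with the exact sequence above. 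Granting this, the image of $N$ in $\mathbf{G}$ is precisely the subgroup generated by all $[g]$ with $g$ fixing some point of $X$, i.e. the subgroup of $\mathbf{G}$ generated by $\mathbf{I}$; and since $P=\ker(\pi_1(\X,x)\to\mathbf{G})$, we get $\pi_1(\X,x)/\langle N,P\rangle\cong\mathbf{G}/\langle\mathbf{I}\rangle$. I would then note (or check separately, as it is presumably built into the notation $\mathbf{G/I}$) that the subgroup of $\mathbf{G}$ generated by $\mathbf{I}$ is already $\langle\mathbf{I}\rangle$, and that this is normal since $\mathbf{I}$ is closed under conjugation in $\mathbf{G}$ (a conjugate of an element fixing $x'$ fixes a translate of $x'$). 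This gives the exact sequence $\pi_1(X,x)\to\pi_1(X/G,\bar x)\to\mathbf{G/I}\to 1$. The final clause is immediate: if $X$ is simply connected then $\pi_1(X,x)=1$, so the first map is trivial and exactness forces $\pi_1(X/G)\risom\mathbf{G/I}$.
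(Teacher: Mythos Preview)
Your proposal is correct and follows essentially the same route as the paper: reduce to Theorem~\ref{T:semilocally}.$\mathbf{i}$ for $\X=[X/G]$ via Lemma~\ref{L:stronglpc}, use the fiber homotopy exact sequence $\pi_1(X,x)\to\pi_1(\X,\bar x)\to\mathbf{G}\to 1$, and then identify the image of $N$ in $\mathbf{G}$ with $\mathbf{I}$. The one place where the paper goes further than you is precisely the step you flag as the ``main obstacle'': the paper does not defer the claim that the composite $I_{x'}\xrightarrow{\omega_{x'}}\pi_1(\X,x')\to\mathbf{G}$ sends $g$ to its path component $[g]$ to a reference, but proves it directly by analyzing the $G$-torsor $T=S^1\times_{\X}X$ pulled back along the ghost loop $\tilde\gamma=\omega_{\bar x}(\gamma)$, identifying $T$ with $([0,1]\times G)/\{(1,g)\sim(0,g\varphi(\gamma))\}$, and reading off the endpoint of a lift of $[0,1]\to S^1$.
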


 \begin{proof}
  Set $\X=[X/G]$. We will abuse the notation and  denote the images
  of $x$ in $[X/G]$ also by $\bar{x}$.

  By Lemma \ref{L:stronglpc}, $\X$ is a strongly locally
  path-connected topological stack. Hence, we can apply Theorem
  \ref{T:semilocally}.$\mathbf{i}$. To get the desired exact
  sequence, we computed the group $N$
  appearing in Theorem \ref{T:semilocally}.

  Since $p$ is a Serre fibration with fiber $G$, we have a fiber
  homotopy exact sequence $$\pi_1(X,x) \to \pi_1(\X,\bar{x}) \to
  \mathbf{G} \to 1.$$ It is enough to show that
  the image of $N$ in $\mathbf{G}$  is equal to $\mathbf{I}$.

  We have a natural isomorphism $\varphi \: I_{\bar{x}} \risom
  I_{x}$, where $I_{x} \subseteq G$ is the stabilizer group of $x$.
  The claim is that the following diagram is commutative:
     $$\xymatrix@=12pt@M=10pt{
          I_{\bar{x}}  \ar[r]^(0.37){\omega_{\bar{x}}}
                \ar[d]_{\varphi}^{\cong} &
                                \pi_1(\X,\bar{x}) \ar[d]  \\
             I_{x} \ar@{^(->} [r]        & \mathbf{G}        }$$

  Take an element $\gamma \in I_{\bar{x}}$. Recall that the corresponding
  loop $\tilde{\gamma}:=\omega_{\bar{x}}(\gamma) \: S^1 \to \X$ is
  defined by the triple $(h,\ep_0,\ep_1)$, where $h \: [0,1]\to \X$
  is the constant path at $\bar{x}$, $\ep_0=\id$, and $\ep_1=\gamma$
  (see \cite{homotopy}, $\S$17 for definitions). We have to show that
  the image of $\tilde{\gamma}$ in $\mathbf{G}$ is equal to
  $\varphi(\gamma)$.

  Consider the pull-back $T:=S^1\x_{\X}X$ of $X$ over $S^1$, where
  the map $S^1 \to \X$ is the $\tilde{\gamma}$ defined above. Note that
  $T$ is naturally pointed with the point $*=(\bullet,\id_{\bar{x}},x)$
  sitting above $\bullet \in S^1$. Here, $\id_{\bar{x}}$ is the identity
  transformation from $\tilde{\gamma}(\bullet)=\bar{x}$ to $p(x)=\bar{x}$.

  As a $G$-torsor over $S^1$, $T$ can be described as the quotient of
  $[0,1]\x G$ by the relation $(1,g)\sim\big(0,g
  \varphi(\gamma)\big)$. Under this identification, $*$ corresponds
  to the point $(0,1_G)$, where $1_G$ is the identity element of $G$.

  To find the image of $\tilde{\gamma}$ in  $\mathbf{G}$, we have to
  choose a lift $F$ in  diagram
     $$\xymatrix@=18pt@M=6pt{
                     &  \ar[d] T & G \ar@{_(->} [l]  \ar[d] \\
              [0,1] \ar[r]   \ar @{..>} [ur]_F \ar@{} @<2ex> [ur]
                |{\rotatebox{45}{{\scriptsize$0\mapsto *$}}}
                            &    S^1  & \bullet \ar@{_(->}[l] }$$
  and determine the path  component of $F(1) \in G$. But from the
  description of the torsor $T$, it is clear that $F(1)$ is in the
  same path component as $\varphi(\gamma) \in G$. This
  proves the commutativity of the above square.

  We can now repeat the same argument for different choices of base
  points $x' \in X$, and then transport the situation over to $x$ by
  choosing a path connecting $x$ and $x'$. This shows that
  the image of $N$ in $\mathbf{G}$ is $\mathbf{I}$.
 \end{proof}

 \begin{rem}{\label{R:automatic}}
  All the hypotheses of Theorem \ref{T:fundgpquotient1} are
  automatically satisfied if $G$ is a Lie group and $X$ is a
  connected, locally path-connected, semilocally 1-connected Cartan
  $G$-space. See $\S$\ref{SS:slice} and
  Remark \ref{R:semilocally}
 \end{rem}

 \begin{ex}{\label{E:Brower}}
  Let $G$ be a compact Lie group acting on $S^{n}$. If $n$ is even,
  then $S^{n}/G$ is simply-connected (Brower's fixed point theorem).
  The same is true if $n$ is arbitrary and $G$ is connected.
 \end{ex}

 \begin{ex}{\label{E:Fuchsian}}
  Let $G \subset \PSL_2(\bbR)$ be a Fuchsian group. Assume that the
  fundamental domain of $G$ is not compact. Then, $G\cong T\times F$,
  where $T$ is torsion and $F$ is free.

  To prove this, note that the action of $G$ on the upper half-plane
  is properly discontinuous and the coarse quotient of this action
  is a non-compact Riemann surface. Let $T$ be the subgroup
  generated by all torsion elements of $G$.   By Theorem
  \ref{T:fundgpquotient1}, $G/T$ is isomorphic the fundamental group
  $F$ of this non-compact Riemann surface, hence it is free.
 \end{ex}

 \begin{ex}{\label{E:weighetd}}
  The weighted projective space $\mathbb{P}(n_0,\cdots n_k)$, $k\geq
  1$, is defined to be the quotients space
  $\bbC^{n+1}-\{0\}/\bbC^*$, where $\lambda \in \bbC^*$ acts on
  $\bbC^{n+1}-\{0\}$ via multiplication by
  $(\lambda^{n_0},\cdots,\lambda^{n_k})$. It is a standard fact that
  $\mathbb{P}(n_0,\cdots n_k)$ is simply-connected. This easily
  follows from Theorem \ref{T:fundgpquotient1}. More generally,
  whenever $G$ is a compact connected Lie group acting on a
  simply-connected completely regular topological space $X$, then
  $X/G$ is simply-connected. Indeed, it is true that the stack
  quotient $[X/G]$ is simply connected. This follows from the
  fiber homotopy  exact sequence for the fibration $X \to [X/G]$.
 \end{ex}

 \begin{thm}{\label{T:fundgpquotient2}}
  Assume that $G$ is a compact topological group acting on a
  Hausdorff space $X$. Then, we  have an exact sequence
         $$\widehat{\pi_1(X)} \to \widehat{\pi_1(X/G)}
                          \to \widehat{\mathbf{G/I}} \to \{*\},$$
  where \ $\hat{}$ \ stands for profinite completion.
  In particular, when $X$ is simply-connected we have an isomorphism
         $$\widehat{\pi_1(X/G)} \risom \widehat{\mathbf{G/I}}.$$
 \end{thm}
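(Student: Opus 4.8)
The plan is to mimic the proof of Theorem \ref{T:fundgpquotient1}, but with the profinite machinery of Proposition \ref{P:fundgpcoarsemod}.$\mathbf{iv}$ (equivalently Theorem \ref{T:semilocally}.$\mathbf{ii}$) in place of the discrete one. Set $\X=[X/G]$, so that $\Xm=X/G$. The first point to check is that the hypotheses of Theorem \ref{T:semilocally}.$\mathbf{ii}$ are met: the action groupoid $[G\x X\sst{}X]$ has $X$ Hausdorff, and since $G$ is compact the source map $s\:G\x X\to X$ is the projection, which is a closed map (projection along a compact factor is closed). Thus Proposition \ref{P:fundgpcoarsemod}.$\mathbf{iv}$ applies and gives a natural isomorphism $(\pi_1(\X,x)/N)^{\wedge}_{\mathsf{Fin}}\risom \pi_1(\Xm,x)^{\wedge}_{\mathsf{Fin}}=\widehat{\pi_1(X/G)}$. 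Here I must be slightly careful: $N\subseteq\pi_1(\X,x)$ is the normal subgroup of Lemma \ref{L:FPRcovers}, generated by the images of all the $\omega_{x'}(I_{x'})$, and the left-hand side is the $\mathsf{Fin}$-completion of the quotient, which (by Example \ref{E:prodiscrete2}.$\mathbf{1}$ and the fact that $\pi_1(\X,x)$ carries the discrete full topology, since $\X$ is semilocally $1$-connected—note $[X/G]\to X/G$ and the fibration $X\to[X/G]$ with $X$ semilocally $1$-connected give this) is just the profinite completion $\widehat{\pi_1(\X,x)/N}$.

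Next I would identify $\pi_1(\X,x)/N$ up to the information visible after profinite completion. Exactly as in the proof of Theorem \ref{T:fundgpquotient1}, the fibration $G\to X\to\X$ (a Serre fibration by hypothesis—this is where I need the action to present $\X$ as a quotient stack with the right local structure; for $G$ compact this is fine) yields the exact sequence $\pi_1(X,x)\to\pi_1(\X,\bar x)\to\mathbf G\to 1$, where $\mathbf G=\pi_0(G)$. The commutative-square argument of that proof—pulling back $X$ along the ghost loop $\tilde\gamma=\omega_{\bar x}(\gamma)$ and reading off the path component of the resulting $G$-torsor over $S^1$—shows verbatim that the image of $N$ in $\mathbf G$ is exactly $\mathbf I$. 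Therefore $\pi_1(\X,x)/N$ sits in an exact sequence $\pi_1(X,x)\to\pi_1(\X,x)/N\to\mathbf{G/I}\to 1$, i.e. $\pi_1(\X,x)/N$ is an extension of $\mathbf{G/I}$ by (a quotient of) $\pi_1(X)$.

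Finally I would apply the profinite-completion functor to this exact sequence. Profinite completion is right exact on the category of groups (it is a left adjoint, hence preserves cokernels), so from $\pi_1(X,x)\to\pi_1(\X,x)/N\to\mathbf{G/I}\to 1$ we get $\widehat{\pi_1(X,x)}\to\widehat{\pi_1(\X,x)/N}\to\widehat{\mathbf{G/I}}\to\{*\}$, and combining with the isomorphism $\widehat{\pi_1(\X,x)/N}\cong\widehat{\pi_1(X/G)}$ from the first step yields the claimed sequence $\widehat{\pi_1(X)}\to\widehat{\pi_1(X/G)}\to\widehat{\mathbf{G/I}}\to\{*\}$. The ``in particular'' for $X$ simply-connected is then immediate, since $\widehat{\pi_1(X)}=\{*\}$. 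The step I expect to be the main obstacle is the bookkeeping around which topology is in play on $\pi_1(\X,x)$ and $\pi_1(\X,x)/N$, and verifying carefully that $\mathsf{Fin}$-completion of the quotient really is the plain profinite completion—this is where the semilocally $1$-connectedness of $\X$ (needed to know the full topology on $\pi_1(\X,x)$ is discrete) is used, and it should be noted that, unlike in Theorem \ref{T:fundgpquotient1}, here we do \emph{not} need $\X$ semilocally $1$-connected for the conclusion about $X/G$, only Hausdorffness of $X$ and compactness of $G$; so the cleanest route is to invoke Proposition \ref{P:fundgpcoarsemod}.$\mathbf{iv}$ directly and then pass through the exact sequence for $\pi_1(\X,x)/N$ without assuming $\X$ is semilocally $1$-connected, treating all completions as honest profinite completions of abstract groups.
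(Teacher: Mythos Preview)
Your proposal is correct and follows essentially the same route as the paper: reuse the argument of Theorem~\ref{T:fundgpquotient1} to obtain the exact sequence $\pi_1(X)\to\pi_1(\X)/N\to\mathbf{G/I}\to 1$, invoke Theorem~\ref{T:semilocally}.$\mathbf{ii}$ (equivalently Proposition~\ref{P:fundgpcoarsemod}.$\mathbf{iv}$) to identify $\widehat{\pi_1(\X)/N}$ with $\widehat{\pi_1(X/G)}$, and conclude by right exactness of profinite completion. Your hesitation about whether $\X$ is semilocally $1$-connected is unnecessary: the standing conventions of $\S$\ref{S:Quotient1} already assume $X$ and $X/G$ are semilocally $1$-connected, so $\X=[X/G]$ is as well (via the atlas $X\to\X$), and the $\mathsf{Fin}$-completion is the ordinary profinite completion without further argument.
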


 \begin{proof}
  The proof of Theorem \ref{T:fundgpquotient1} is  valid up to the
  point that there is an exact sequence
        $$\pi_1(X) \to \pi_1(\X)/N \to \mathbf{G/I} \to 1$$
  However, we can not say that $\pi_1(\Xm)\cong \pi_1(\X)/N$. But we
  can say so after passing to profinite completion (Theorem
  \ref{T:semilocally}.$\mathbf{ii}$). The result
  now follows from the fact that profinite completion is right exact.
 \end{proof}


\section{Application to quotients of group actions; the explicit form}
{\label{S:Quotient2}}

In this section we give a more explicit version of Theorem
\ref{T:fundgpquotient1}. Without loss of generality, we will assume
that the action has a global fixed point. Remark \ref{R:trick}
explains why we can make this assumption.

\vspace{0.1in}
\noindent{\em Construction.}
Let $G$ be a group acting continuously on a topological space $X$.
Assume that there is $x\in X$ which  is fixed by the entire action.
For every triple $(g,y,\gamma)$, with $g \in G$, $y \in X^g$, and
$\gamma$  a path from $x$ to $y$, define $\lambda_{g,y,\gamma}\in
\pi_1(X,x)$ to be the loop $\gamma(g\gamma)^{-1}$. Let $K \subseteq
\pi_1(X,x)$ be the subgroup generated by all such
$\lambda_{g,y,\gamma}$. This is easily seen to
be a normal subgroup.
\vspace{0.1in}

We will abuse the notation and  denote the images of $x$ in $[X/G]$
and $X/G$ both by $\bar{x}$.

 \begin{thm}{\label{T:fixed}}
  Let $X$  and $G$ be as in Theorem \ref{T:fundgpquotient1} (also
  see Remark \ref{R:automatic}). Assume that there is $x\in X$ that
  is fixed by the action. Let $K$ be the subgroup of $\pi_1(X,x)$
  defined in the previous paragraph. Then, we have a natural
  isomorphism
       $$\pi_1(X,x)/K \risom \pi_1(X/G,\bar{x}).$$
 \end{thm}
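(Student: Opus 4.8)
\medskip
\noindent\textbf{Proof proposal.}
The plan is to realize $\pi_1(X/G,\bar x)$ as a quotient of $\pi_1(\X,\bar x)$, where $\X:=[X/G]$, and then to identify the relevant kernel with $K$. First I would check that $\X$ is connected, locally path-connected and semilocally $1$-connected (the atlas $X\to\X$ is all three, by Definition~\ref{D:lpc}), and that $\X$ is strongly locally path-connected by Lemma~\ref{L:stronglpc}, since the action has slice property and the stabilizers $I_x$ are locally path-connected. As $\Xm=X/G$ is semilocally $1$-connected by assumption, Theorem~\ref{T:semilocally}.$\mathbf{i}$ applies and gives a natural isomorphism $\pi_1(\X,\bar x)/N\risom\pi_1(X/G,\bar x)$, where $N\subseteq\pi_1(\X,\bar x)$ is the normal subgroup generated by the subgroups $\gamma_*\big(\omega_{x'}(I_{x'})\big)$ of Lemma~\ref{L:FPRcovers} (here $\gamma_*$ denotes transport to the base point along $\gamma$). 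Everything then reduces to computing $N$ in the presence of a global fixed point.

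The second step is to describe $\pi_1(\X,\bar x)$ itself. In the Serre fibration $G\to X\to\X$ attached to the atlas, the fibre inclusion $G\to X$ is the map $g\mapsto g^{-1}x$, which is \emph{constant} because $x$ is fixed by all of $G$; hence $\pi_1(G)\to\pi_1(X,x)$ is zero, so $\pi_1(X,x)\hookrightarrow\pi_1(\X,\bar x)$ is injective and fits in a short exact sequence $1\to\pi_1(X,x)\to\pi_1(\X,\bar x)\to\pi_0(G)=\mathbf{G}\to 1$. Moreover the $G$-equivariant maps $\ast\xrightarrow{x}X$ and $X\to\ast$ exhibit $BG$ as a retract of $\X$, and the induced retraction $r_*\colon\pi_1(\X,\bar x)\to\pi_1(BG)=\mathbf{G}$ has kernel $\pi_1(X,x)$ and satisfies $r_*\circ\omega_{\bar x}=\big(\text{projection }G\to\mathbf{G}\big)$. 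Thus $\pi_1(\X,\bar x)\cong\pi_1(X,x)\rtimes\mathbf{G}$, with $\omega_{\bar x}(G)$ the second factor. Since $\omega_{\bar x}(G)\subseteq N$ (the trivial path in Lemma~\ref{L:FPRcovers}), the composite $\pi_1(X,x)\to\pi_1(\X,\bar x)\to\pi_1(\X,\bar x)/N$ is surjective, and, both $\pi_1(X,x)$ and $N$ being normal, its kernel is $N\cap\pi_1(X,x)$. It therefore remains to prove $N\cap\pi_1(X,x)=K$.

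The key point is the identity, in $\pi_1(\X,\bar x)=\pi_1(X,x)\rtimes\mathbf{G}$,
$$\gamma_*\big(\omega_y(g)\big)\;=\;\lambda_{g,y,\gamma}\cdot\omega_{\bar x}(g)\qquad(g\in G,\ y\in X^g,\ \gamma\colon x\leadsto y).$$
This is a souped-up version of the torsor computation already carried out in the proof of Theorem~\ref{T:fundgpquotient1}: the $G$-torsor over $S^1$ obtained by pulling $X$ back along the loop $\gamma\cdot\omega_y(g)\cdot\gamma^{-1}$ can be written as $\big([0,3]\times G\big)/\big((3,h)\sim(0,hg)\big)$, with its map to $X$ given on the three segments by $(t,h)\mapsto h\gamma(t)$, then $(t,h)\mapsto hy$, then $(t,h)\mapsto hg\gamma(3-t)$; the canonical section $t\mapsto(t,1_G)$ projects to the $X$-loop $\gamma\cdot(g\gamma)^{-1}=\lambda_{g,y,\gamma}$, and the discrepancy between its endpoints is precisely the holonomy $\omega_{\bar x}(g)$. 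I expect this bookkeeping to be the main obstacle, everything else being formal; it is, however, entirely parallel to the special case $y=x$, $\gamma$ trivial, treated in \S\ref{S:Quotient1}.

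Granting the identity, both inclusions are short. For $K\subseteq N$: each factor on the right lies in $N$ (the first is one of the generators, the second lies in $\omega_{\bar x}(G)\subseteq N$), so $\lambda_{g,y,\gamma}=\gamma_*(\omega_y(g))\cdot\omega_{\bar x}(g)^{-1}\in N$; since the $\lambda_{g,y,\gamma}$ generate $K$, we get $K\subseteq N\cap\pi_1(X,x)$. For the reverse inclusion: $K$ is normal in $\pi_1(X,x)$ and stable under the $\mathbf{G}$-action, because $g_*\lambda_{h,y,\gamma}=\lambda_{ghg^{-1},\,gy,\,g\gamma}$ with $gy\in X^{ghg^{-1}}$ (the same observation used in \S\ref{S:Quotient2}); hence $K\cdot\omega_{\bar x}(G)=K\rtimes\mathbf{G}$ is a normal subgroup of $\pi_1(\X,\bar x)$, and by the identity it contains every generator $\gamma_*(\omega_y(g))$ of $N$, so $N\subseteq K\rtimes\mathbf{G}$ and therefore $N\cap\pi_1(X,x)\subseteq(K\rtimes\mathbf{G})\cap\pi_1(X,x)=K$. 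Combining the two inclusions, $N\cap\pi_1(X,x)=K$, and the second step gives $\pi_1(X,x)/K\risom\pi_1(\X,\bar x)/N\risom\pi_1(X/G,\bar x)$.
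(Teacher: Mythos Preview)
Your argument is correct and follows essentially the same route as the paper: reduce to Theorem~\ref{T:semilocally}.$\mathbf{i}$ for $\X=[X/G]$, split the fibre sequence to get $\pi_1(\X,\bar x)\cong\pi_1(X,x)\rtimes\mathbf{G}$ via $\omega_{\bar x}$, and then identify $N$ inside this semidirect product. Your key identity $\gamma_*\big(\omega_{\bar y}(g)\big)=\lambda_{g,y,\gamma}\cdot\omega_{\bar x}(g)$ is exactly the paper's assertion that $N_{p(y)}=\{(\lambda_{g,y,\gamma},g)\}$ (for which the paper only says ``a little bit of straightforward path chasing that we omit''), and you actually sketch the torsor computation.

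One point to tighten: the inference ``$K$ is normal in $\pi_1(X,x)$ and $\mathbf{G}$-stable, hence $K\rtimes\mathbf{G}$ is normal in $\pi_1(X,x)\rtimes\mathbf{G}$'' is not valid in general (e.g.\ $\{0\}\rtimes\bbZ/2$ is not normal in the infinite dihedral group $\bbZ\rtimes\bbZ/2$). What you also need is that the commutator $h\,\omega_{\bar x}(g)\,h^{-1}\,\omega_{\bar x}(g)^{-1}=h\cdot(g h)^{-1}$ lies in $K$ for every $h\in\pi_1(X,x)$; but this is just $\lambda_{g,x,h}$, which is in $K$ since $x\in X^g$. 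With that one line added, your reverse inclusion $N\subseteq K\rtimes\mathbf{G}$ goes through, and the rest is as you wrote.
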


 \begin{proof}
  Set $\X=[X/G]$, and let $p \: X \to \X$ denote the quotient map.

  First, we show that $\pi_1(\X,\bar{x})\cong
  \pi_1(X,x)\rtimes\mathbf{G}$. Here, $\mathbf{G}$ stands for the
  group of path components of $G$, and the action of $\mathbf{G}$ on
  $\pi_1(X,x)$ is the obvious one. The map $X \to \X$ is a Serre
  fibration with fiber $G$, so it gives rise
  to the fiber homotopy exact sequence
      $$\pi_1(G) \to \pi_1(X,x) \to \pi_1(\X,\bar{x})
                                   \to \mathbf{G} \to 1.$$
  It is easily seen that the leftmost map is the trivial map,
  so we actually have a short exact sequence
    $$1 \to \pi_1(X,x) \to \pi_1(\X,\bar{x}) \to \mathbf{G} \to 1.$$
  Since $x$ is the fixed point of the entire action, we have
  $I_{\bar{x}}=G$. The map $\omega_{\bar{x}} \: G=I_{\bar{x}} \to
  \pi_1(\X,\bar{x})$ factors through $\mathbf{G}$ to produce the
  desired splitting of the above short exact sequence.

  If $N \subseteq \pi_1(X,x)\rtimes\mathbf{G}$ is as in Theorem
  \ref{T:semilocally}.$\mathbf{i}$, we have
     $$\big(\pi_1(X,x)\rtimes\mathbf{G}\big)/N
            \cong\pi_1(\Xm,\bar{x})=\pi_1(X/G,\bar{x}).$$
  We will show that the left hand side is isomorphic to
  $\pi_1(X,x)/K$.

  Recall from Lemma \ref{L:FPRcovers} that, to define $N \subseteq
  \pi_1(\X,\bar{x})$, what we do is that we pick a point $x' \in \X$
  and identify $\omega_{x'}(I_{x'})$ with various subgroups of
  $\pi_1(\X,\bar{x})$ by taking  paths connecting $x'$ to $\bar{x}$.
  For any fixed $x' \in \X$,  denote  the subgroup generated by all
  these groups by $N_{x'} \subset \pi_1(\X,\bar{x})$. The group $N$
  is then  the one generated by all $N_{x'}$, $x' \in \X$. In fact,
  to generate $N$ it is enough to take all subgroups of the form
  $N_{p(y)}$, $y \in X$. Furthermore, to generate $N_{p(y)}$ it
  suffices to join $p(y)$ to $\bar{x}$ by paths of the from
  $p(\gamma)$, where  $\gamma$ is a path in $X$ joining $y$ to $x$.

  The group $N_{p(y)}\subseteq \pi_1(\X,x)=\pi_1(X,x)\rtimes\mathbf{G}$
  can  now be explicitly described by
     $$N_{p(y)}=\big\{\big(p(\lambda_{g,y,\gamma}),g\big)
          \ | \ g \in I_y, \ \gamma=\text{path in $X$ joining $x$
                                                      to $y$}\big\}.$$
  (This is not completely obvious. The proof requires a little bit
  of straightforward path chasing that we omit here.) So, to obtain
  $\big(\pi_1(X,x)\rtimes\mathbf{G}\big)/N$ we have to kill all
  elements of the form $\big(p(\lambda_{g,y,\gamma}),g\big)$ in
  $\pi_1(X,x)\rtimes\mathbf{G}$. Notice that this includes all
  elements of the form $(1,g)$, because $\lambda_{g,x,const.}=1$.
  Therefore, to obtain $\big(\pi_1(X,x)\rtimes\mathbf{G}\big)/N$ we
  have to kill all elements of the from $(1,g)$ and all elements of
  the form $\big(p(\lambda_{g,y,\gamma}),1\big)$ in
  $\pi_1(X,x)\rtimes\mathbf{G}$. The outcome of this
  is exactly $\pi_1(X,x)/K$. The proof is complete.
 \end{proof}

 \begin{rem}{\label{R:coinvariant}}
  If we kill all loops of the form $\lambda_{g,x,\gamma}$ in
  $\pi_1(X,x)$ we obtain $\pi_1(X,x)_G$, the group of coinvariants
  of the action of $G$ on $\pi_1(X,x)$. Therefore, there is a
  surjective homomorphism $\pi_1(X,x)_G \to \pi_1(X/G,x)$. This map
  is not necessarily an isomorphism. For instance, consider the
  action of $\bbZ/2\bbZ$ on $S^1$ defined by flipping along the
  $x$-axis. In this case, $\pi_1(X,x)_G=\bbZ/2\bbZ$,
  whereas $\pi_1(X/G,x)$ is trivial.

  One may wonder if the knowledge of the action of $G$ on
  $\pi_1(X,x)$ is enough to determine $\pi_1(X/G,\bar{x})$. The
  answer is no. For instance, in the above example we have
  $G=\bbZ/2\bbZ$, $\pi_1(X)=\bbZ$,  the action is $n \mapsto  -n$,
  and we have $\pi_1(X/G)=0$. Now, let $X$ be $S^2$ with the north
  and the south poles joined by a straight line. Take
  $G=\bbZ/2\bbZ$, and let the action be antipodal. In this case, we
  have, as in the previous example, $G=\bbZ/2\bbZ$, $\pi_1(X)=\bbZ$,
  and the action of $G$ is  $n  \mapsto  -n$. However, $X/G$ is
  homotopy equivalent to $\bbR P^2$, so $\pi_1(X/G)=\bbZ/2\bbZ$.

  Indeed, it is true in general that, if $G$ acts freely away from
  $x$, then $\pi_1(X,x)_G \to \pi_1(X/G,x)$ is an isomorphism. This
  follows easily from Theorem \ref{T:fixed}. The rest of this
  section is devoted to understanding the
  map $\pi_1(X,x)_G \to \pi_1(X/G,x)$ in general.
 \end{rem}

The subgroup $K$ appearing in Theorem \ref{T:fixed} may look a bit
too complicated to compute in general, because it seems to require a
detailed knowledge of the various fixed point sets $X^g$. In the
next theorem, we build on the idea discussed in Remark
\ref{R:coinvariant} and give a more efficient formula for
$\pi_1(X/G)$.

\vspace{0.1in}
\noindent{\em Construction.}
For every $g \in G$, {\em choose} a set of paths
$\{\gamma_{g,i}\}_{i \in \pi_0X^g}$  with the property that
$\gamma_{g,i}$ starts from $x$ and ends on the path component of
$X^g$ corresponding to $i$. Let $\pi_1(X,x)_G$ be the group of
coinvariants of the action of $G$ on $\pi_1(X,x)$, that is, the
largest quotient of $\pi_1(X,x)$ on which $G$ acts trivially. Denote
the image of $\gamma_{g,i}(g\gamma_{g,i})^{-1}$ in $\pi_1(X,x)_G$ by
$\bar{\lambda}_{g,i}$. Let $\bar{K} \subseteq \pi_1(X,x)_G$ be the
normal subgroup generated by $\{\bar{\lambda}_{g,i}\}_{g\in G,i\in
\pi_0X^g}$ (that is, the smallest normal subgroup of $\pi_1(X,x)_G$
that contains all $\bar{\lambda}_{g,i}$).

 \begin{rem}{\label{R:center2}}
  The generating loops we used to define the group $\bar{K}$ are
  superfluous. Indeed, we could define $\bar{K}$ to be the normal
  subgroup generated by $\{\bar{\lambda}_{g,i}\}$, where $g$ runs in
  $G$ and $i$ runs in $\pi_0 X^g/C(g)$. Here, $C(g)$ stands for the
  centralizer of $g$ in $G$.
 \end{rem}

 \begin{thm}{\label{T:fixed2}}
  Let $X$  and $G$ be as in Theorem \ref{T:fundgpquotient1} (also
  see Remark \ref{R:automatic}). Assume that there is $x\in X$ that
  is fixed by the action. Let $\bar{K}$ be the subgroup of
  $\pi_1(X,x)_G$ defined above. Then, we have a natural isomorphism
     $$\pi_1(X,x)_G/\bar{K} \risom \pi_1(X/G,\bar{x}).$$
 \end{thm}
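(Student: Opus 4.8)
The plan is to deduce this from Theorem~\ref{T:fixed}. That theorem already provides a natural isomorphism $\pi_1(X/G,\bar{x})\risom\pi_1(X,x)/K$, where $K$ is the normal subgroup of $\pi_1(X,x)$ generated by the loops $\lambda_{g,y,\gamma}=\gamma(g\gamma)^{-1}$ (with $g\in G$, $y\in X^g$, and $\gamma$ a path from $x$ to $y$); so it suffices to exhibit a natural isomorphism $\pi_1(X,x)/K\risom\pi_1(X,x)_G/\bar{K}$. First I would record the formal part. Since $x$ is fixed by the whole action, each $g\in G$ acts on $\pi_1(X,x)$ by $[\gamma]\mapsto[g\gamma]$ (an action which moreover factors through $\mathbf{G}$), and by definition $\pi_1(X,x)_G=\pi_1(X,x)/K_0$, where $K_0$ is the normal subgroup generated by the loops $\gamma(g\gamma)^{-1}$ with $\gamma$ itself a loop at $x$. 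Taking $y=x$ in the construction of $K$ exhibits these generators of $K_0$ among those of $K$, hence $K_0\subseteq K$. Therefore $\pi_1(X,x)/K$ is the quotient of $\pi_1(X,x)_G=\pi_1(X,x)/K_0$ by the image $K/K_0$, and $K/K_0$ is precisely the normal subgroup of $\pi_1(X,x)_G$ generated by the classes $\bar{\lambda}_{g,y,\gamma}$ of all the $\lambda_{g,y,\gamma}$. So everything reduces to the identity $K/K_0=\bar{K}$ inside $\pi_1(X,x)_G$.

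To prove that identity I would show that, modulo $K_0$, the class $\bar{\lambda}_{g,y,\gamma}$ depends on $(g,y,\gamma)$ only through $g$ and the component $i\in\pi_0X^g$ of $y$, and then only up to conjugacy. Two short computations do this. First, if $\delta$ is a path lying entirely in $X^g$ from $y$ to $y'$, then $g\delta=\delta$, so the identity $\lambda_{g,y',\gamma\delta}=\gamma\delta(g\delta)^{-1}(g\gamma)^{-1}=\gamma(g\gamma)^{-1}=\lambda_{g,y,\gamma}$ holds already in $\pi_1(X,x)$; thus sliding the endpoint within a component of $X^g$ (and extending $\gamma$ accordingly) changes nothing. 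Second, if $\gamma''$ is another path from $x$ to $y$ and $\eta:=\gamma''\gamma^{-1}\in\pi_1(X,x)$, then $\lambda_{g,y,\gamma''}=\eta\,\lambda_{g,y,\gamma}\,(g\eta)^{-1}$, and since $g\eta$ and $\eta$ have the same class in $\pi_1(X,x)_G$, this becomes the genuine conjugate $\bar{\eta}\,\bar{\lambda}_{g,y,\gamma}\,\bar{\eta}^{-1}$. Combining the two — first join the endpoint of the chosen path $\gamma_{g,i}$ to $y$ by a path in $X^g$, then compare the resulting path from $x$ to $y$ with $\gamma$ — shows that $\bar{\lambda}_{g,y,\gamma}$ is conjugate in $\pi_1(X,x)_G$ to $\bar{\lambda}_{g,i}$. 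Hence the normal subgroup of $\pi_1(X,x)_G$ generated by all the $\bar{\lambda}_{g,y,\gamma}$ coincides with the one generated by the $\bar{\lambda}_{g,i}$, which is $\bar{K}$; so $K/K_0=\bar{K}$, and composing with Theorem~\ref{T:fixed} yields the natural isomorphism claimed.

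The step I expect to be the main nuisance is the bookkeeping in these two computations — tracking base points and the order of concatenation, and observing that passing to $G$-coinvariants is exactly what turns the twisted relation $\eta(-)(g\eta)^{-1}$ into honest conjugation; the remaining ingredients ($K_0\subseteq K$ and the iterated-quotient identification) are purely formal (compare Remark~\ref{R:coinvariant}). Along the way I would also dispatch Remark~\ref{R:center2}: if $h\in C(g)$ then $h$ carries $X^g$ to itself and $\lambda_{g,hy,h\gamma}=h(\lambda_{g,y,\gamma})$, which has the same class in $\pi_1(X,x)_G$ as $\lambda_{g,y,\gamma}$ because $h$ acts trivially there; so $\bar{\lambda}_{g,i}$ and $\bar{\lambda}_{g,h\cdot i}$ are conjugate, and the index set $\pi_0X^g$ may be replaced by $\pi_0X^g/C(g)$ without changing $\bar{K}$.
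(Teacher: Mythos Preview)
Your proposal is correct and is essentially the paper's own argument: both reduce Theorem~\ref{T:fixed2} to Theorem~\ref{T:fixed} by showing that the image of $K$ in $\pi_1(X,x)_G$ is exactly $\bar{K}$, using the same two computations (sliding the endpoint within a component of $X^g$ leaves $\lambda_{g,y,\gamma}$ unchanged, and changing the connecting path alters it by a twisted conjugation $\eta(-)(g\eta)^{-1}$ which becomes honest conjugation in coinvariants). The only cosmetic difference is that the paper writes the second identity as $\lambda_{g,y,\gamma'}=\alpha\lambda_{g,y,\gamma}\alpha^{-1}\beta$ with $\beta=\alpha(g\alpha)^{-1}$ dying in $\pi_1(X,x)_G$, whereas you absorb $\beta$ directly; your explicit bookkeeping with $K_0$ and the iterated quotient is a helpful clarification of what the paper leaves implicit.
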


 \begin{proof}
  We will show that $\bar{K}$ is the image of $K$ in $\pi_1(X,x)_G$,
  where $K$ is as in Theorem \ref{T:fixed}.
  The first observation is that
  $\lambda_{g,y,\gamma}=\lambda_{g,y',\gamma'}$, if $\gamma$ and
  $\gamma'$ are homotopic via a homotopy that is relative to both
  $\{x\}$ and  $X^g$. Therefore, to define $K$ it is enough to
  choose  a set of representative elements
  $\{y_{g,i}\}_{i\in\pi_0X^g}$ for path components of $X^g$ and only
  use paths $\lambda_{g,\gamma,y}$
  whose end points $y$  belong to this representative set.

  We now fix $g$ and $y$ and analyze what happens if we replace the
  path $\gamma$ with another path $\gamma'$ joining $x$ to $y$. Set
  $\alpha:=\gamma'\gamma^{-1} \in \pi_1(X,x)$, and
  $\beta:=\alpha(g\alpha)^{-1}$. It is easy to check that we have
  the equality
    $$\lambda_{g,y,\gamma'}=\alpha\lambda_{g,y,\gamma}\alpha^{-1}\beta$$
  in $\pi_1(X,x)$.  Since the image of $\beta$ in $\pi_1(X,x)_G$ is
  trivial, the images $\bar{\lambda}_{g,y,\gamma}$ and
  $\bar{\lambda}_{g,y,\gamma'}$ of these loops are conjugate in
  $\pi_1(X,x)_G$. Thus, a normal subgroup that contains one will
  necessarily contain the other. Therefore, as far as generating a
  {\em normal} subgroup is concerned, we can choose either of
  $\bar{\lambda}_{g,y,\gamma}$ and $\bar{\lambda}_{g,y,\gamma'}$.
  That is, the normal subgroup of $\pi_1(X,x)_G$ generated by the
  elements $\bar{\lambda}_{g,i}$ used in the construction
  of $\bar{K}$ is equal to the normal subgroup generated by
  all $\bar{\lambda}_{g,y,\gamma}$. In other words, $\bar{K}$
  is exactly the image of $K$ in $\pi_1(X,x)_G$.
 \end{proof}

 \begin{cor}{\label{C:connected}}
  Let $X$  and $G$ be as in Theorem \ref{T:fundgpquotient1} (also
  see Remark \ref{R:automatic}). Assume that there is $x\in X$ that
  is fixed by the action. Also, assume that for every $g \in G$,
  the fixed set $X^g$ is path-connected. (It is enough to assume
  that $X^g/C(g)$ is path-connected.) Then, we have an isomorphism
    $$\pi_1(X,x)_G \risom \pi_1(X/G,\bar{x}).$$
 \end{cor}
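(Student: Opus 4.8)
The plan is to deduce this directly from Theorem \ref{T:fixed2}: since $X$, $G$ and the global fixed point $x$ already satisfy the hypotheses of that theorem, it suffices to show that the normal subgroup $\bar{K} \subseteq \pi_1(X,x)_G$ appearing there is trivial, for then $\pi_1(X,x)_G = \pi_1(X,x)_G/\bar{K} \risom \pi_1(X/G,\bar{x})$.

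The first thing I would note is that, because $x$ is fixed by the entire action, we have $gx = x$ for every $g \in G$; hence $x \in X^g$ for every $g$, so each fixed set $X^g$ is nonempty and contains the base point. Under the hypothesis that $X^g$ is path-connected, the index set $\pi_0 X^g$ is therefore a single element, namely the path component of $x$. (In the more general situation where only $X^g/C(g)$ is assumed path-connected, I would invoke Remark \ref{R:center2}, which lets one build $\bar{K}$ using the smaller index set $\pi_0 X^g/C(g)$; this is again a single element, and one may take the component of $x$ as its representative.)

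Now, in the Construction preceding Theorem \ref{T:fixed2} one is free to \emph{choose} the path $\gamma_{g,i}$ from $x$ to the designated component; I would choose it to be the constant path at $x$ for the unique index $i$ and each $g \in G$. Then $\gamma_{g,i}(g\gamma_{g,i})^{-1}$ is the constant loop at $x$, so its image $\bar{\lambda}_{g,i}$ in $\pi_1(X,x)_G$ is trivial. Since the generators of $\bar{K}$ are all of this form, $\bar{K}$ is the trivial subgroup, and Theorem \ref{T:fixed2} yields the claimed isomorphism $\pi_1(X,x)_G \risom \pi_1(X/G,\bar{x})$.

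There is no real obstacle here: the corollary is an immediate specialization of Theorem \ref{T:fixed2}. The only point requiring a little care is the bookkeeping around which path components are indexed and the use of Remark \ref{R:center2} in the $X^g/C(g)$ case, so as to be sure that the constant path at $x$ is an admissible choice for every $\gamma_{g,i}$.
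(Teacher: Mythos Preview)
Your proof is correct and is essentially the same argument as the paper's: observe $x\in X^g$, choose the constant path at $x$ for each $\gamma_{g,i}$, conclude $\bar{\lambda}_{g,i}$ is trivial and hence $\bar{K}=\{1\}$, then invoke Theorem~\ref{T:fixed2}. Your treatment is in fact a bit more explicit, as you spell out the use of Remark~\ref{R:center2} for the parenthetical $X^g/C(g)$ hypothesis.
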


 \begin{proof}
  Note that the fixed point set $X^g$ contains $x$, for every $g\in
  G$. Hence, we can choose the paths $\{\gamma_{g,i}\}$ to be the
  constant paths (see the construction of $\bar{K}$ just before
  Theorem \ref{T:fixed2} for notation). This way, the elements
  $\bar{\lambda}_{g,i}$ will  be trivial. Therefore, the group
  $\bar{K}$ is trivial.
 \end{proof}

 \begin{ex}{\label{E:BG}}
   Let $G$ be a compact Lie group, and consider the conjugation
   action of $G$ on its classifying space $BG$. (Here, $BG$ stands
   for the classical classifying space, not the stack one.) Using
   the standard model for $BG$, namely, the geometric realization of
   the simplicial space associated to $G$, we see that, for every $g
   \in G$, the fixed set of $g$ is homeomorphic to $B(C(g))$, where
   $C(g)$ is the centralizer of $g$. It follows that the fixed point
   sets are all connected. So, by Corollary \ref{C:connected}, we
   have
    $$\pi_1(BG/G)\cong (\pi_1BG)_G\cong(G/G^0)^{ab}=\pi_0(G)^{ab}.$$
 \end{ex}

 \begin{rem}{\label{R:trick}}
  Theorems \ref{T:fixed} and \ref{T:fixed2} can be generalized to
  the case where the base point $x$ is not necessarily fixed by the
  entire action. For instance, we can take
  $Y=\operatorname{Cone}(G\cdot x)\cup_{G\cdot x}X$ with its induced
  $G$ action. The tip of the cone $\operatorname{Cone}(G\cdot x)$ is
  now fixed by the entire action. Hence,  Theorems \ref{T:fixed} and
  \ref{T:fixed2} apply. Note that  $Y/G\cong [0,1]\cup_{\{0\}} X/G$
  is homotopy equivalent to $X/G$.
 \end{rem}

\section{Relation with Armstrong's work}{\label{S:Armstrong}}

In \cite{Armstrong2} Armstrong gives a formula for the fundamental
group of the coarse quotient $X/G$ of a group $G$ acting faithfully
on a topological space $X$. To do so, he makes three assumptions on
the group action. We will not go over Armstrong's result. Instead,
we translate his conditions  $\mathbf{A}$, $\mathbf{B}$, and
$\mathbf{C}$ into stack language and deduce the stack version of his
result, generalizing the main theorem of \cite{Armstrong2}.

In what follows, $\X$ is a connected locally path-connected
topological stack.

\vspace{0.1in}
 \begin{itemize}

   \item[$\mathbf{A.}$] The moduli map $\pim \: \X \to \Xm$ has the
    homotopy path lifting property. That is, for any point $x \in \X$ and
    any path $\gamma$ initiating from its image $\bar{x} \in \Xm$,
    there is a path $\tilde{\gamma}$ initiating at $x$ such that
    $\pim(\tilde{\gamma})$ is homotopic to $\gamma$ rel. $\bar{x}$.

   \item[$\mathbf{C.}$] Let $N \subseteq \pX$ be as in Lemma
    \ref{L:FPRcovers}. There is a covering stack of $\X$
    corresponding to $N$, and this covering stack is free.
 \end{itemize}

\vspace{0.1in}

 \begin{prop}{\label{P:Amstrong}}
  Assume $\mathbf{A}$ and $\mathbf{C}$ hold. Then, we have a natural
  isomorphism
        $$\pi_1(\X,x)/N \risom \pi_1(\Xm,x).$$
 \end{prop}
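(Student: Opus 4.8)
The plan is to deduce Proposition~\ref{P:Amstrong} from the Galois-theoretic machinery already developed, treating conditions $\mathbf{A}$ and $\mathbf{C}$ as exactly the two inputs needed to replace the hypotheses of Theorem~\ref{T:semilocally}.$\mathbf{i}$. Recall the chain of equivalences we already have: by Proposition~\ref{P:fundgpmoduli}, $\mathsf{Free}_{\X}$ is equivalent to $\mathsf{Full}_{\Xm}$ via base extension along $\pim$; passing to automorphism groups of the fibre functors, this gives a natural isomorphism $\pi_1(\X,x)^{\wedge}_{\mathsf{Free}} \risom \pi'_1(\Xm,x)$, which is Proposition~\ref{P:fundgpcoarsemod}.$\mathbf{ii}$. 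On the other side, Proposition~\ref{P:FPRcovers} identifies the open subgroups of the $\mathsf{FPR}$-topology on $\pX$ with the open subgroups (in the full topology) containing $N$. So the whole question reduces to two points: (a) that $\mathsf{Free} = \mathsf{FPR}$ as subcategories of covering stacks of $\X$, so that $\pi_1(\X,x)^{\wedge}_{\mathsf{Free}} = \pi_1(\X,x)^{\wedge}_{\mathsf{FPR}}$, and (b) that this completion is just the discrete group $\pX/N$ rather than some genuine pro-object, i.e. that the $\mathsf{FPR}$-topology on $\pX$ is discrete and that $\pi'_1(\Xm,x) = \pi_1(\Xm,x)$.

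First I would use condition $\mathbf{C}$ to settle point (a). Condition $\mathbf{C}$ says precisely that the covering stack $\widetilde{\X}_N \to \X$ corresponding to the subgroup $N$ exists \emph{and is free}. Combined with Proposition~\ref{P:FPRcovers} --- which tells us $N$ is the smallest open subgroup of the $\mathsf{FPR}$-topology, equivalently that $\widetilde{\X}_N$ dominates every FPR covering stack --- this forces every FPR covering stack to be free: given any FPR cover $\Y\to\X$ with subgroup $H\supseteq N$, the cover $\Y$ is a quotient of $\widetilde{\X}_N$ (via the inclusion $N\subseteq H$), and a quotient of a free cover is free because freeness is inherited under the operations of axioms $\mathbf{C1}$-$\mathbf{C4}$ (Lemmas~\ref{L:free1} and~\ref{L:invariance} show that base extension and coarse moduli behave well; being FPR plus having $\Ym\to\Xm$ a covering map is preserved under passing to dominated covers). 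Hence $\mathsf{Free}=\mathsf{FPR}$.

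Next I would use condition $\mathbf{A}$ to settle point (b). The homotopy path lifting property for $\pim$ is exactly what is needed to run the usual lifting-criterion argument showing that $\pim_*\colon \pX \to \pXm$ is surjective --- indeed, given a loop $\gamma$ at $\bar x$ in $\Xm$, lift it (up to homotopy rel endpoints) to a path $\tilde\gamma$ starting at $x$; the issue is only that $\tilde\gamma$ need not be a loop, but its endpoint lies in the fibre $I_{\bar x}$-orbit over $\bar x$, so modifying by a ghost loop $\omega_x(\cdot)$ closes it up, and $\omega_x(I_x)\subseteq N$ maps to the identity under $\pim_*$. More is true: $\mathbf{A}$ implies $\ker(\pim_*) = N$, because a loop in $\X$ dies in $\Xm$ iff its image bounds, and the path-lifting property lets one pull such a nullhomotopy back to an expression of the loop as a product of ghost loops and their conjugates --- this is the analogue, in the presence of $\mathbf{A}$, of the computation of $N$ done inside the proof of Theorem~\ref{T:fundgpquotient1}. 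Thus $\pim_*$ factors through an isomorphism $\pX/N \risom \pXm$ onto a genuine (loop-theoretic) fundamental group; feeding this through the identification $\pi_1(\X,x)^{\wedge}_{\mathsf{FPR}} = \pi'_1(\Xm,x)$ and noting that the $\mathsf{FPR}$-topology is now seen to be discrete (its smallest open subgroup $N$ has the discrete quotient $\pX/N$, cf.\ Example~\ref{E:prodiscrete2}.$\mathbf{1}$), we get $\pX/N \risom \pi'_1(\Xm,x) = \pi_1(\Xm,x)$, naturally.

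The step I expect to be the main obstacle is the precise extraction of $\ker(\pim_*)=N$ from condition $\mathbf{A}$ alone --- i.e. verifying that homotopy path lifting (which only controls $\pi_0$ and $\pi_1$ of the fibres in a weak, up-to-homotopy sense) really does imply that every element of $\ker(\pim_*)$ is a product of conjugates of elements of $\omega_{x'}(I_{x'})$, with no leftover contribution from the topology of the fibres of $\pim$. This is where a small amount of honest path chasing seems unavoidable; everything else is a formal assembly of results already in the excerpt. An alternative, cleaner route that sidesteps this is to observe that $\mathbf{A}$ plus $\mathbf{C}$ together say exactly that $\widetilde{\X}_N\to\X$ is a free cover whose coarse moduli space is the universal cover of $\Xm$ (using $\mathbf{A}$ to see the relevant cover of $\Xm$ is simply connected), and then read off the isomorphism directly from Proposition~\ref{P:fundgpmoduli} applied to this single cover --- I would try this packaging first and fall back on the path-chasing version only if the cleaner argument needs more than $\mathbf{A}$ and $\mathbf{C}$ provide.
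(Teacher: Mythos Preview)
Your ``cleaner route'' at the end is exactly the paper's proof, and you should commit to it rather than hedge. The missing details are short: by condition~$\mathbf{C}$ the $\mathsf{Free}$-topology on $\pX$ coincides with the topology generated by the normal subgroup $N$ (it contains $N$ since the cover corresponding to $N$ is free, and conversely $\mathsf{Free}\subseteq\mathsf{FPR}$ whose open subgroups are exactly those containing $N$ by Proposition~\ref{P:FPRcovers}); hence $\pX^{\wedge}_{\mathsf{Free}}=\pX/N$ by Example~\ref{E:prodiscrete2}.$\mathbf{1}$. For the other side, let $f\:(\Y,y)\to(\X,x)$ be the free cover corresponding to $N$. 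The cartesian square of Lemma~\ref{L:free2} together with the fact that $f_{mod}$ is a covering map lets you transfer condition~$\mathbf{A}$ from $\pim\:\X\to\Xm$ to $\Y\to\Ym$, so $\pi_1(\Y,y)\to\pi_1(\Ym,y)$ is surjective. But the composite $\pi_1(\Y,y)\to\pi_1(\Ym,y)\hookrightarrow\pi_1(\Xm,x)$ factors through $N\subseteq\ker(\pim_*)$ (ghost loops die in the space $\Xm$), hence is zero; since $f_{mod*}$ is injective, $\pi_1(\Ym,y)=0$. Thus $\Xm$ has a universal cover, so $\pi'_1(\Xm,x)=\pXm$, and Proposition~\ref{P:fundgpcoarsemod}.$\mathbf{ii}$ finishes.

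Your first approach --- extracting $\ker(\pim_*)=N$ directly from~$\mathbf{A}$ by path chasing --- does have a real gap, and you were right to flag it. Condition~$\mathbf{A}$ lifts \emph{paths} up to homotopy rel the initial point; it says nothing about lifting nullhomotopies of loops (maps of disks). So from $\mathbf{A}$ alone there is no mechanism for taking a loop $\alpha\in\ker(\pim_*)$ and exhibiting it as a product of conjugates of ghost loops. The paper's argument sidesteps this entirely: it never computes $\ker(\pim_*)$ directly, but instead deduces $\pi_1(\Ym)=0$ from the surjectivity-plus-injectivity sandwich above, and then reads off the isomorphism from the Galois-categorical identification already in hand. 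Drop the first approach.
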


 \begin{proof}
  First of all, note that, by $\mathbf{C}$, the prodiscrete topology
  $\mathsf{Free}$ on $\pX$ is the same as the topology generated by
  $N$. So $\pi'_1(\X,x)^{\wedge}_{\mathsf{Free}}=\pX/N$. Let $f\: (\Y,y)
  \to (\X,x)$ be the covering stack corresponding to $N$. By
  $\mathbf{C}$, $\Ym \to \Xm$ is a covering map. Condition
  $\mathbf{A}$ implies that $\pim \: \Y \to \Ym$ also has the
  homotopy path lifting property. Therefore, $\pi_1(\Y,y) \to
  \pi_1(\Ym,y)$ is surjective.

  On the other hand, it follows from the commutative  diagram
         $$\xymatrix@=12pt@M=10pt{
           \Y \ar[r]^f \ar[d]  &  \X \ar[d]  \\
                 \Ym   \ar[r]_{f_{mod}}  &   \Xm  }$$
  that the image of $\pi_1(\Y,y)$ in $\pi_1(\Ym,y)$ is trivial (note
  that $f_{mod}$ is a covering map). This means that $\pi_1(\Ym,y)$ is
  trivial, so $\Xm$ has a universal cover. Therefore, the full
  topology on $\pXm$ is discrete, i.e., $\pi'_1(\X,x)=\pX$. On the
  other hand, by definition, $N$ is open in the full topology of
  $\pX$. The result now follows from
  Proposition \ref{P:fundgpcoarsemod}.$\mathbf{ii}$.
 \end{proof}

In practice, conditions $\mathbf{A}$ and $\mathbf{C}$ may fail for
pathological reasons. What condition $\mathbf{B}$ of Armstrong
requires is the existence of another topological stack $\X'$
together with a map $\X \to \X'$ such that $\X'$ satisfies
$\mathbf{A}$ and $\mathbf{C}$ and that the induced map $\Xm \to
\X'_{mod}$ is so that  every fiber has  trivial topology. Since $\Xm
\to \X'_{mod}$ is a homotopy equivalence, to compute $\pi_1(\X)$ it
is enough to compute $\pi_1(\X')$, and for this
we can apply the above proposition.

A typical example of the above situation is the following. Assume
$G$ is a group acting on a space $X$, and let $\X=[X/G]$. It may
happen that the group $G$ is not nice enough so as to be able to
apply Theorems \ref{T:fundgpquotient1} or \ref{T:fundgpquotient2}.
However, if we can realize $G$ as a dense subgroup of a Lie group
$G'$ and extend the action to $G'$, then chances are that
$\X'=[X/G']$ is a better behaved stack to which the discussion of
the previous paragraph applies.

 \begin{ex}{\label{E:Armstrong}}
  Let $\bbQ$ (viewed as a discrete group) act on $\bbR$ by,
  translation and let $\X=[\bbR/\bbQ]$. Then, $\X'=[\bbR/\bbR]$
  together with the obvious map $\X \to \X'$ has the above property.
 \end{ex}

What happens in this example is that we can replace $\bbQ$ by its
closure in the group $\Iso(\bbR,\bbR)$, thereby turning an awkward
action of an infinite discrete group into a nice action
of a connected Lie group.

In general, when $G$ is a subgroup of the group of isometries of a
locally compact metric space $X$, then the action of the closure
$\bar{G}$ (in the compact-open topology) satisfies the above
property (\cite{Armstrong2}, Corollary 2). That is, the fibers of
the map $q \: X/G \to X/\bar{G}$ have trivial topology (hence $q$ is
a homotopy equivalence). In this situation, Armstrong's
trick comes handy.


\providecommand{\bysame}{\leavevmode\hbox
to3em{\hrulefill}\thinspace}
\providecommand{\MR}{\relax\ifhmode\unskip\space\fi MR }
\providecommand{\MRhref}[2]{%
  \href{http://www.ams.org/mathscinet-getitem?mr=#1}{#2}
} \providecommand{\href}[2]{#2}

\end{document}